\documentclass[12pt, twoside, letterpaper]{amsart}
\usepackage{amsmath, hyperref, color, amssymb}
\usepackage{srcltx}
\usepackage{ulem}

%\usepackage{showkeys}
%\usepackage[pagewise]{lineno}
%\linenumbers

\usepackage{geometry}
\geometry{a4paper,hcentering,vcentering,outer=2.5cm,top=3cm}

%-------------------------------------------------------
% Theorems and similar

\theoremstyle{plain}%
\newtheorem{Theorem}{Theorem}[section] %
\newtheorem{Corollary}[Theorem]{Corollary}%
\newtheorem{Lemma}[Theorem]{Lemma}

\newenvironment{customthm}[1]
  {\innercustomthm}
  {\endinnercustomthm}

\newenvironment{customlemma}[1]
  {\innercustomlm}
  {\endinnercustomlm}
  
\theoremstyle{definition}%
\newtheorem{Assumption}[Theorem]{Assumption}%
\newtheorem*{Acknowledgments}{Acknowledgments} %
\theoremstyle{remark}%
\newtheorem{Remark}[Theorem]{Remark} %

% Useful commands
\newcommand{\eps}{\varepsilon}

\newcommand{\Pas}{\text{$\mathbb{P}$--a.s.}}
\newcommand{\esssup}{\operatorname*{\mathrm{ess\,sup}}}
\newcommand{\argmax}{\operatorname*{\mathrm{arg\,max}}}
\newcommand{\essinf}{\operatorname*{\mathrm{ess\,inf}}}

\newcommand{\tbl}[1]{\textcolor{black}{#1}}
%-------------------------------------------------------
%-------------------------------------------------------
%\newtheorem{thm}{Theorem}[section]
%\newtheorem{cor}[thm]{Corollary}
%\newtheorem{lem}[thm]{Lemma}
%\newtheorem{prop}[thm]{Proposition}
%\newtheorem{conj}[thm]{Conjecture}
%\theoremstyle{definition}
%\newtheorem{defn}[thm]{Definition}
%\newtheorem{nota}[thm]{Notation}
%\newtheorem{as}[thm]{Assumption}
%\newtheorem{stas}[thm]{Standing Assumption}
%\theoremstyle{remark}
%\newtheorem{rem}[thm]{Remark}
%\newtheorem{exa}[thm]{Example}
%\numberwithin{equation}{section}
%----------------------------------------------------

\newcommand{\dbracc}[1]{[\kern-0.15em[ #1 ]\kern-0.15em]}
\newcommand{\dbraco}[1]{[\kern-0.15em[ #1 [\kern-0.15em[}
\newcommand{\dbraoc}[1]{]\kern-0.15em] #1 ]\kern-0.15em]}
\newcommand{\dbraoo}[1]{]\kern-0.15em] #1 [\kern-0.15em[}
\newcommand{\be}{\begin{equation}}
\newcommand{\ee}{\end{equation}}
\newcommand{\ba}{\begin{aligned}}
\newcommand{\ea}{\end{aligned}}

\DeclareMathOperator{\conv}{conv}

% ----------------------------------------------------------------

%\input{style16}

\usepackage{cancel}%, showkeys}
\begin{document}
\title[Stability of the indirect utility process]{Stability of the indirect utility process}
%\title[Stability of Forward performance processes]{Stability of the Forward Performance Processes}

\author{Oleksii Mostovyi}
\address{Oleksii Mostovyi, Department of Mathematics, University of Connecticut, Storrs, CT 06269, United States}%
\email{oleksii.mostovyi@uconn.edu}%
%\date{\today}
\date{\today}
\subjclass[2010]{91G10, 93E20. \textit{JEL Classification:} C61, G11.}
\keywords{Stability, indirect utility, arbitrage of the first kind, no unbounded profit with bounded risk, local martingale deflator, duality theory, incomplete market}%
\maketitle

\begin{abstract}
%Using a novel transformation of the set of wealth processes under perturbations, 
%We investigate stability and first-order convergence of the forward performance process associated with a trading strategy under perturbations of the volatility and finite variation part of the stock price return. 
We investigate the dynamic stability of the indirect utility process associated with a (possibly suboptimal) trading strategy under perturbations of the market. Establishing the  \tbl{reverse conjugacy} characterizations first, we prove continuity and first-order convergence of the indirect-utility process under simultaneous perturbations of the finite variation and martingale parts of the return of the \tbl{risky asset}. %We discuss the relation to forward performance process (FPP), and therefore the results of this paper provide an approach for the stability analysis of the FPPs under perturbations of the market.
\end{abstract}
%\section*{Main question}
%Does $J$ has a right-continuous modification? -answer 2/8/2019 - no, without strong assumptions. 

\section{Introduction}
%%%%%%%%
%Forward performance processes (FPPs) were originally introduced in \cite{MZ1} and \cite{MZ2} to measure performances of portfolios in a way that allows for dynamic adaptation of the investor's preferences. For every choice of a portfolio, the FPP dynamically specifies the quality of such a portfolio.  In this work, we fix a portfolio process and investigate whether the corresponding FPP is stable with respect to perturbations of the market. More specifically, we consider simultaneous distortions of the finite-variation part (or drift) and the martingale part (or volatility) of the stock price process and their effect on the FPP for a fixed portfolio.  Under fairly natural model assumptions, we demonstrate the stability of FPPs with respect to model perturbations.  The results of the present paper imply the robustness of the FPP as a dynamic criterion of the quality of a portfolio, and thus this paper complements the research of many authors, in particular, \cite{RMT}, \textcolor{black}{however here we work in non-Markovian settings}.
%%%%%%%%

Indirect utility appears in mathematical finance as one of the primary criteria of the quality of a portfolio. Therefore, the regularity of the solution to certain classical problems in mathematical finance is associated with the stability or continuity of the indirect utility under perturbations of the initial data. From the practical viewpoint, as every statistical procedure allows for an only approximate determination of the model parameters, implementation of algorithms of optimal investment and utility-based pricing and hedging hinges on continuity of the indirect utility under model perturbations. The results of this paper show that under reasonably natural assumptions the indirect utility is a stable criterion of quality of the portfolio. Further, this also holds in dynamic settings, \tbl{where the dynamic characterizations are usually harder to establish, and even for suboptimal portfolios.}

%One of the primary motivations for studying stability for suboptimal strategies in dynamic settings is a connection with forward performance processes (FPPs), which were originally introduced in \cite{MZ1} and \cite{MZ2} to measure performances of portfolios in a way that allows for dynamic adaptation of the investor's preferences. Thus, the results of this paper provide an approach for the analysis of FPPs under market perturbations. They also imply the robustness of the indirect utility (and therefore the FPP in finite-horizon settings) as a dynamic criterion of the quality of a portfolio, and thus this paper complements the research of many authors, in particular, \cite{RMT}, \textcolor{black}{in non-Markovian settings}. However, the complete analysis of general FPPs goes beyond the scope of the current paper, it is being currently investigated in \cite{MostovyiThaleia}.

Mathematically, the results below hinge on the dual characterization of the indirect utility associated with a possibly suboptimal trading strategy. Both the dynamic formulation and such a suboptimality lead to  multiple difficulties related to establishing convex-analytic results \tbl{for} functions whose codomain is a space of random variables $\mathbb L^0$ (and not $\mathbb R$), i.e., for random elements. It turns out that even fundamental theorems of convex analysis are harder to establish in such settings, and for example, the classical Fenchel-Moreau (or biconjugation) theorem over a {\it dual pair of Banach spaces} have been proven only recently, see \cite{Kupper17}. \tbl{In the present settings, however, we need not work with a dual pair, but rather with a pair of {\it polar sets} of stochastic processes, where polarity has to be understood appropriately. We identify precisely such polar sets and use the classical characterizations of the sets of wealth processes from \cite{Delbaen-Schachermayer1997} combined with changes of num\'eraire and results from \cite{Mostovyi2015} to obtain the biconjugation result.} 

\tbl{
In the process of proving the biconjugacy, we establish a result, which is closely related to the conditional minimax theorem. Another version of this theorem can be found in \cite{ErhanKravitz}. Note that in our formulation we also do not require compactness of either domain, but only {\it boundedness in probability} is needed.} Such boundedness often appears in the mathematical finance literature in both primal and dual domains under natural no-arbitrage conditions, for example in \cite{KS}. Note that minimax without compactness is a classical subject of  analysis, see e.g., \cite{minimaxFan}, \cite{HaMinimax}, \cite{minimax2}; a version of the minimax theorem that is helpful in financial applications, in particular below, and that does not require compactness of either domain can be found in \cite{bankKauppila}.

\tbl{For the stability analysis, we introduce a parametrization of perturbations, which allows considering distortions of the drift or volatility of the \tbl{risky asset} together or separately. %Then, we establish a representation property for the sets of admissible wealth processes and its dual counterpart under such perturbations. 
 Then we identify certain primal and dual feasible elements under such perturbations  
%\textcolor{black}{This representations are new, to the best of our knowledge.} 
and  prove convergence of the indirect utility process (in the sense of Theorem \ref{thmDeriv} below) and complement this with finding its  associated derivative with respect to a parameter (also in Theorem \ref{thmDeriv}).} Dual characterization is key here.

Our construction of the dual domain is consistent with the weak no-arbitrage condition, no unbounded profit with bounded risk (NUPBR) introduced in \cite{KarKar07}, that still allows for the meaningful structure of the underlying problem.  \textcolor{black}{As the formulation of the indirect utility in a  dynamic formulation is closely related to forward  performance processes (FPPs) of the form \cite[Definition 4.3]{ZZ}}, one of the contributions of the present paper is in showing that FPPs on a finite time horizon can be considered under NUPBR and possibly without stronger no free lunch with vanishing risk (NFLVR) condition, which was predominantly used for the investigation of FPPs in the past. Note that FPPs were originally introduced in \cite{MZ1} and \cite{MZ2} to measure performances of portfolios in a way that allows for dynamic adaptation of the investor's preferences. Thus, the results of this paper provide an approach for the analysis of FPPs under market perturbations. They also imply the robustness of the indirect utility (and therefore the FPP in finite-horizon settings) as a dynamic criterion of the quality of a portfolio, and thus this paper complements the research of many authors, in particular, \cite{RMT}, \textcolor{black}{in non-Markovian settings}. However, the complete analysis of general FPPs goes beyond the scope of the current paper.
\textcolor{black}{Note that, in static settings, questions related to stability were investigated in \cite{AZ} and \cite{KardarasZitkovic2011}, among others.}

The remainder of the paper is organized as follows: in section \ref{secModel} we specify the model, section \ref{secDualChar} contains the dual characterization, in section \ref{secStab} we show the stability of the indirect utility process.
\section{Model}\label{secModel}
We work on a filtered probability space $(\Omega, \mathcal F, (\mathcal F_t)_{t\geq 0}, \mathbb P)$, where the filtration satisfies the usual conditions, $\mathcal F_0$ is trivial. There is a \tbl{riskless asset}, whose price equals to $1$ at all times, and a \tbl{risky one}. The conditions on the \tbl{risky asset} will alter in different sections, thus in section \ref{secDualChar}, it can be considered to be a general multidimensional semimartingale, and for stability analysis in section \ref{secStab} and below, we will work with a $1$-dimensional continuous process. 
\subsection{Utility field and maximization problem}
Let us consider an Inada stochastic utility field $U$: $[0,\infty)\times\Omega\times [0,\infty)\to \mathbb R\bigcup\{-\infty\}$, i.e., a stochastic field, which satisfies the following assumption. 
\begin{Assumption}\label{asInada}
For every $(t,\omega)\in [0,\infty)\times \Omega$, $U(t,\omega,\cdot)$ is an Inada utility function, that is, a strictly increasing, strictly concave, differentiable function, which satisfies the Inada conditions:
$$\lim\limits_{x\searrow 0}U'(t,\omega,x) = \infty\quad {and} \quad \lim\limits_{x\to\infty}U'(t,\omega,x) = 0,$$
where $U'$ denotes the derivative with respect to the last argument. 
At $x = 0$, we suppose that $U(t,\omega, 0) = \lim\limits_{x\searrow 0} U(t,\omega, x)$. This value may be $-\infty$. We also suppose that $U(\cdot,\cdot,x)$ is optional for every $x>0$. As it is common in the probability literature, the symbol $\omega$ will usually be omitted. 
\end{Assumption}
We refer to \cite[Chapter 9]{Jarrow} for an overview of utility functions.  
In this and the following section, we consider only one market, where there is a $d$-multidimensional \tbl{risky asset} with a return process $R^0$ and a riskless asset, whose price equals to $1$ at all times. 
Following \cite{KS}, we denote by $\mathcal X(x)$ the set of nonnegative wealth processes:
$$\mathcal X(x) = \left\{X\geq 0:~X = x + H\cdot R^0~for~some~R^0-integrable~H\right\},\quad x\geq 0.$$
%$\mathcal X_T(x)$ is the restriction of $\mathcal X(x)$ to time interval $[0,T]$.  
In this market, we  fix an initial wealth $\bar x\geq 0$ and a predictable and $R^0$-integrable process $\pi$ \tbl{(up to $t\in[0,T]$)}, which specifies the proportions of wealth invested in corresponding \tbl{\tbl{risky asset}s} and such that $X^{\pi} = \bar x\mathcal E\left( \pi\cdot R^0\right)\geq 0,$
where, \tbl{here and below,} $\mathcal E(\cdot)$ denotes the stochastic exponential.
The set of wealth processes in $\mathcal X(\bar x)$, which equal to  $X^\pi$ on $[0,t]$, is denoted by $\mathcal A(X^{\pi}_t, t)$, that is
%\begin{equation}\label{defA}
%\begin{split}
%&\mathcal A(X^{\pi}_t, t, T) :=\left\{X_{t'}= X^\pi_t +\int_{t+}^{t'}H_{s}dR^0_{s}\geq 0, ~t'\in(t,T]:~H~is~R^0-integrable
%%,\right.\\
%%&\left.
%\right\}.
%\end{split}
%\end{equation}
\begin{equation}\label{defA}
\mathcal A(X^{\pi}_t, t) :=\left\{\tilde X\in\mathcal X(\bar x):~ \tilde X_s = X^\pi_s,~for~s\in[0,t],~\mathbb P-a.s.
\right\}.
\end{equation}
In such settings, we \textcolor{black}{define  an indirect utility %a forward performance
 up to $T$} of  $\pi$ as
\begin{equation}\label{defu}
u(X^{\pi}_t, t, T) :=\esssup\limits_{\tilde X\in\mathcal A(X^{\pi}_t, t)}\mathbb E\left[ U\left(T, \tilde X_T\right)|\mathcal F_t\right], \quad t\in[0,T].
\end{equation}
Note that this definition is closely related to the definition of forward performance processes, see, e.g., \cite{Tehran1}, \cite{SergeyThaleia1}, and
\cite{MishaRonnieLevon} on a finite time horizon. However, \eqref{defu} does not require the existence of the optimizer (instead, it is proven below), and the supermartingale structure of 
$u(X^{\pi}_t, t, T)$, $t\in[0,T]$, i.e., 
$\mathbb E\left[ u(X^{\pi}_{t_2}, t_2, T)|\mathcal F_{t_1}\right]\leq u(X^{\pi}_{t_1}, t_1, T)$ for $0\leq t_1\leq t_2\leq T$, can also be shown.  \tbl{Further,  \eqref{defu} is  also forward performances in the sense of \cite[Definition 4.3]{ZZ}, where the difference is in the exact form of the domain for the optimization problem \eqref{defu}.}
%In order to analyze the effect of perturbations of the volatility on the process $u(\pi,0, \cdot)$.
%\begin{Definition}
%An Inada stochastic field is a forward performance if for every $0\leq t\leq T<\infty$, we have
%$u(\xi, t, T) = U(t, \xi)$, or 
%\begin{displaymath}
%U(t, \xi) = \esssup\limits_{H~is~S-integrable}\mathbb E_t\left[U\left(T, \xi + \int_{t+}^TH_udS_u\right) \right],\quad \xi\in\bigcup\limits_{x\geq 0}\mathcal C_t(x).
%\end{displaymath}
%\end{Definition}
\subsection{Reformulation of \eqref{defu}}
For the analysis below, we need to extend the definition of \eqref{defu} to the closure of the convex solid hull of $\{X_t:~X\in\mathcal X(x)\}$. This is done in the following two-step procedure. First, we define
\begin{equation}\label{defCt}
\mathcal C_t(x) \triangleq\left\{g\in m\mathcal F_t:~g\leq X_t ~for~some~X\in\mathcal X(x)\right\},~~x\geq 0,
\end{equation} 
\tbl{where  $m\mathcal  F_t$ stands for $\mathcal F_t$ measurability of $g$, {see e.g., \cite[{p. 29}]{Williams} for   notations of this kind.} Now, we set}
\begin{equation}\label{defCtT}
\mathcal C_{tT}(x) :=\left\{g\in m\mathcal F_T:~g\leq x + \int_{t+}^TH_udR^0_u,~for~some~R^0-integrable~H\right\},~~ x\geq 0,
\end{equation}
and we will denote $\mathcal C_t(1)$ by $\mathcal C_t$ as well as $\mathcal C_{tT}(1)$ by $\mathcal C_{tT}$, respectively. 
\begin{Remark}\textcolor{black}{The idea behind such definitions is that every stochastic integral of the form $x + \int_0^THdR^0$ can be represented as $(x+\int_0^tH_udR^0_u)(1 + \int_{t+}^T\tilde H_udR^0_u)$ for an appropriate $\tilde H$. Also, similarly to the argument in \cite[Section 4, p. 926]{KS} %by the results of Delbaen and Schachermayer, see e.g., \cite[Theorems 4.1 and 5.5]{Delbaen-Schachermayer1998}, 
one can show that the sets $\mathcal C_t$ and $\mathcal C_{tT}$ are closed with respect to the topology of convergence in measure. This, in particular, allows for the following representation of $\mathcal C_T$: $$\mathcal C_T = \mathcal C_t\mathcal C_{tT} = \left\{\xi\rho:~\xi\in\mathcal C_t,~\rho\in\mathcal C_{tT}\right\},\quad \tbl{t\in[0,T]}.$$
 Further, as $U$ is increasing, \tbl{an} optimizer to \eqref{defu} is a maximal element of $\mathcal C_{tT}$. \tbl{Thus,} by enlarging the domain of \eqref{defu} \tbl{as above}, we do not lose  the structure of the solution to \eqref{defu}. On the other hand, by passing from the set of wealth processes to the closure of the (convex) solid hull of such processes we gain the properties needed, in particular, for the conjugacy characterization below.} 
\end{Remark}
Thus, we extend the definition of $u$ in \eqref{defu} from $X^\pi$ to $\mathcal C_t$ as follows 
\begin{equation}\label{primalProblem}
u( \xi,t,T) :=\esssup\limits_{\rho\in\mathcal C_{tT}}\mathbb E_t\left[U(T,\xi\rho)\right],\quad \xi\in\bigcup\limits_{x\geq 0}\mathcal C_{t}(x),
\end{equation} 
%where we use the convention
%$$\mathbb E_t\left[U(T,\xi\rho) \right] = -\infty,\quad on\quad\left\{\mathbb E_t\left[U^{-}(T,\xi\rho)\right] = \infty \right\}. $$
with the version of an effective domain of $u(\cdot, t,T)$ being the set 
\begin{equation}\label{effDomP}
\tbl{\tbl{\mathcal E^u_t}} :=
\left\{\xi\in\bigcup\limits_{x\geq 0}\mathcal C_{t}(x): ~there~exists~\rho\in\mathcal C_{tT}, ~such~that~\mathbb E\left[U^-(T, \xi\rho)\right]<\infty\right\}.
\end{equation}

%Our main result will show that under the appropriate regularity assumptions, we have stability, i.e., processes $u(\pi, \delta, \cdot)$ converge to $u(\pi,0,\cdot)$ in the appropriate sense, which corresponds to the convergence of stochastic processes.
\subsection{Technical Assumptions}
We will impose the following conditions. %Conditions on the $0$-model:
\begin{Assumption}\label{NUPBR} (NUBPR up to $T$) Let $T>0$ be fixed. The set
$\left\{X_T:~X\in\mathcal X(1)\right\}$ is bounded in probability. %This condition was introduced in \cite{KarKar07}.
\end{Assumption}
\begin{Assumption}\label{finiteness}(fin value at $T$)
Let $T>0$ be fixed. We suppose that
$$u(z,0,T)>-\infty\quad {\rm and} \quad \sup\limits_{x>0}\left( u(x,0,T) - xz\right)<\infty,\quad z>0 .$$
%There exists $x>0$, such that 
%$u(x,0,0,0)<\infty$ \textcolor{black}{possbly, we need finiteness of both primal and dual value functions for $t = 0$}.
\end{Assumption}
These conditions are necessary of the model to be nondegenerate, see e.g., the abstract theorems in \cite{Mostovyi2015} and \cite{MostovyiNUPBR}. \tbl{In the notations of section \ref{secStab}, these conditions will be imposed on the base or, equivalently, $0$-model.}
\begin{Remark}
Assumption \ref{finiteness} will imply that the conditional expectations below are well-defined, where we adopt the definition \cite[Definition 1, p. 211]{ShirProb}, which does not require integrability. 
\end{Remark}

\section{Dual Characterization}\label{secDualChar}
In this section, we will suppose that the prices process for the \tbl{risky asset}  is a general multidimensional semimartingale, \tbl{not necessarily continuous}. \tbl{The main contributions of this section are in finding the right structure of the dual problem that, in dynamic settings, allows for the existence and uniqueness results and a biconjugacy characterization of the indirect utility under no unbounded profit with bounded risk.  Further, the results of this section provide a version of the Fenchel-Moreau theorem for random elements, i.e., functions whose codomains is a space of random variables. Note that this topic is not well-studied, and a version of a Fenchel-Moreau theorem over a pair of Banach spaces is only recently proven in \cite{Kupper17}. Additionally, by using a change of num\'eraire approach below, a minimax type of result is established without the compactness of either domain. The polar structure of the primal and dual domains is key here, though. }
\subsection{Existence and uniqueness of a solution to \eqref{defu}}
Let us denote 
\begin{equation}\label{defZt}
\begin{split}
%{\mathcal Z}_t &\triangleq& \textcolor{black}{\left\{\bar Z_s = \frac{Z_{t + s}}{Z_t}, ~s\in[0, T-t],~Z\in\mathcal Z\right\}\quad{-to~be~deleted,~probably}},\\
{\mathcal Z}_t  := &\left\{(Z_s)_{s\in[t,T]}\geq 0:\right.\\
&\left.~Z_t\leq 1~and~(Z_sX_s)_{s\in[t,T]}~is~a~supermartinglale~for~every~X\in\mathcal X(1)\right\},\\
\end{split}
\end{equation}
and recall that the notion of \textcolor{black}{Fatou-convergence} of stochastic processes, is introduced in \cite[Definition 5.2]{FK}.
The following lemma shows existence of an optimizer to \eqref{primalProblem}.
\begin{Lemma}\label{231}
Let $T>0$ be fixed and let us suppose that Assumptions \ref{asInada}, \ref{NUPBR}, and \ref{finiteness} hold. 
%Let $v(y,0)<\infty$, $y>0$, and Assumptions \ref{NUPBR} and \ref{finiteness} hold. 
Then for every $\xi\in\tbl{\mathcal E^u_t}$, where $\tbl{\mathcal E^u_t}$ is defined in \eqref{effDomP}, there exists $\rho\in\mathcal C_{tT}$, such that
$$u(\xi,t, T) =\mathbb E_t\left[U(T, \xi\rho)\right].$$
Moreover, \tbl{if} $\xi>0$, $\Pas$, then such a $\rho$ is unique.
\end{Lemma}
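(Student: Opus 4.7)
The plan is to build an optimizer via a conditional maximizing sequence, extract an a.s.\ convergent sequence of convex combinations by Koml\'os-type convex compactness, and pass to the limit under $\mathbb{E}_t$; uniqueness will then follow from strict concavity of $U(T,\cdot)$.

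First, I would verify upward directedness of $\{\mathbb{E}_t[U(T,\xi\rho)] : \rho \in \mathcal{C}_{tT}\}$. Given $\rho_1,\rho_2 \in \mathcal{C}_{tT}$, set $A := \{\mathbb{E}_t[U(T,\xi\rho_1)] \geq \mathbb{E}_t[U(T,\xi\rho_2)]\} \in \mathcal{F}_t$ and $\rho_3 := \rho_1 \mathbb{I}_A + \rho_2 \mathbb{I}_{A^c}$; since $A$ is $\mathcal{F}_t$-measurable, the integrands from \eqref{defCtT} can be spliced across $A$, so $\rho_3 \in \mathcal{C}_{tT}$ and realizes the pointwise conditional maximum of the two. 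Neveu's lemma then produces $(\rho_n) \subset \mathcal{C}_{tT}$ with $\mathbb{E}_t[U(T,\xi\rho_n)] \nearrow u(\xi,t,T)$ $\mathbb{P}$-a.s. Combining NUPBR (Assumption \ref{NUPBR}), which forces $\mathcal{C}_{tT}$ to be bounded in probability, with the closedness of $\mathcal{C}_{tT}$ in $L^0$ recorded in the Remark, a Koml\'os-type convex compactness argument delivers $\tilde\rho_n \in \conv(\rho_n,\rho_{n+1},\ldots)$ with $\tilde\rho_n \to \rho^*$ a.s.\ for some $\rho^* \in \mathcal{C}_{tT}$; concavity of $U(T,\cdot)$ keeps $(\tilde\rho_n)$ a conditional maximizing sequence.

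The main obstacle is then the Fatou passage $\mathbb{E}_t[U(T,\xi\rho^*)] \geq u(\xi,t,T)$. I would split $U = U^+ - U^-$ and argue separately. For $U^+$, reverse Fatou is driven by the Legendre transform $V(T,y) := \sup_{x>0}(U(T,x)-xy)$: for any $y>0$ and $Z \in \mathcal{Z}_t$, the pointwise inequality $U^+(T,\xi\tilde\rho_n) \leq V(T,yZ_T) + y\xi\tilde\rho_n Z_T$ together with the supermartingale property from \eqref{defZt} applied to wealth processes starting at $\xi$ at time $t$, which gives $\mathbb{E}_t[Z_T\xi\tilde\rho_n] \leq \xi$, provides a conditionally integrable dominator as soon as Assumption \ref{finiteness} (through its dual reading) supplies $\mathbb{E}[V(T,yZ_T)] < \infty$ for an appropriate $Z$. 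For $U^-$, continuity of $U(T,\cdot)$ combined with standard Fatou yields $\mathbb{E}_t[U^-(T,\xi\rho^*)] \leq \liminf_n \mathbb{E}_t[U^-(T,\xi\tilde\rho_n)]$; the maximizing property forces $\mathbb{E}_t[U(T,\xi\tilde\rho_n)] \geq \mathbb{E}_t[U(T,\xi\bar\rho)] > -\infty$ eventually, with $\bar\rho$ drawn from \eqref{effDomP}, which prevents $\mathbb{E}_t[U^-(T,\xi\tilde\rho_n)]$ from exploding. Assembling the two pieces gives the desired inequality, and the reverse is trivial since $\rho^* \in \mathcal{C}_{tT}$.

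Finally, uniqueness under $\xi > 0$ $\mathbb{P}$-a.s.\ follows from strict concavity: if $\rho^*,\hat\rho \in \mathcal{C}_{tT}$ were two distinct optimizers, convexity of $\mathcal{C}_{tT}$ would place $\tfrac{1}{2}(\rho^*+\hat\rho) \in \mathcal{C}_{tT}$ and strict concavity of $U(T,\cdot)$ on $(0,\infty)$ would make its conditional expected utility strictly exceed $u(\xi,t,T)$ on $\{\rho^* \neq \hat\rho\}$, contradicting optimality.
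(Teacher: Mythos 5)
Your proof follows the paper's strategy up through the construction of the limiting candidate: the $\mathcal F_t$-measurable splicing for upward directedness, Neveu's lemma, Koml\'os-type convex combinations, closedness of $\mathcal C_{tT}$, and the strict-concavity uniqueness argument are all the same. The gap lies in the passage to the limit for $U^+$.

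First, a misstatement: the pointwise inequality $U^+(T,\xi\tilde\rho_n)\le V(T,yZ_T)+y\xi\tilde\rho_n Z_T$ is not generally valid, since the right-hand side can be negative on $\{U(T,\xi\tilde\rho_n)<0\}$; the Fenchel--Young inequality gives $U(T,x)\le V(T,y')+xy'$, so the left-hand side should be $U$ (or the right should use $V^+$). More substantively, the proposed dominator depends on $n$ through $\tilde\rho_n$, so it is not a dominator in the sense needed for reverse Fatou or dominated convergence, and the bound $\mathbb E_t[\tilde\rho_n Z_T]\le 1$ controls conditional first moments but does not give uniform integrability. If instead you apply Fatou to the nonnegative quantity $V(T,yZ_T)+y\xi\tilde\rho_n Z_T-U(T,\xi\tilde\rho_n)$, using that $\mathbb E_t[\tilde\rho_n Z_T]\le 1$ while $\mathbb E_t[\hat g Z_T]\ge 0$, you only obtain $\mathbb E_t[U(T,\xi\hat g)]\ge u(\xi,t,T)-y\xi$, which does not close the argument for a fixed $y>0$. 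The missing ingredient is the freedom to take $y$ arbitrarily small, with the deflator $Z$ re-chosen so that $\mathbb E[V(T,yZ_T)]<\infty$; this is exactly what the dual reading of Assumption \ref{finiteness} supplies (finiteness of the dual value for \emph{every} $y>0$), and sending $y\searrow 0$ then yields the conclusion. Equivalently, one must establish uniform integrability of $(U^+(T,\xi\tilde\rho^n))_{n\in\mathbb N}$, which is how the paper closes the step by citing \cite[Lemma 3.5]{Mostovyi2015} together with the primal--dual symmetry. You identify the right ingredients but omit the limiting step in $y$ (or the uniform-integrability lemma) that makes them sufficient.
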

\begin{proof}
Let us fix \tbl{$\xi\in\mathcal E^u_t$. }%$\xi\in\bigcup\limits_{x\geq 0}\mathcal C_{t}(x)$, such that \eqref{effDomP} holds. 
First, we will show that the set 
\begin{equation}\label{9182}
\mathcal U_t\triangleq\left\{\mathbb E_t\left[U(T,\xi\rho)\right]:~\rho\in\mathcal C_{tT}\right\}
\end{equation}
is closed under pairwise maximization. Let $\rho^1$ and $\rho^2$ be some elements of $\mathcal C_{tT}$ and let $H^1$ and $H^2$ be such that
$$\rho^i \leq 1 + \int_{t+}^TH^i_udR^0_u,\quad i=1,2.$$ We define
$$A :=\left\{\mathbb E_t\left[U(T,\xi\rho^1)\right] > \mathbb E_t\left[U(T,\xi\rho^2)\right]\right\}\in\mathcal F_t,$$
and set $H :=H^11_A + H^21_{A^c}.$ Then we obtain for
$\rho :=\rho^1 1_A + \rho^21_{A^c}$ that
\begin{equation}\label{9181}
\rho \leq 1 + \int_{t+}^TH_udR^0_u,
\end{equation}
as
 \begin{equation}\nonumber
\rho = \left\{
\begin{split}
\rho^1\leq 1 + \int_{t+}^TH^1_udR^0_u,\quad on \quad A,\\
\rho^2\leq 1 + \int_{t+}^TH^2_udR^0_u,\quad on \quad A^c.\\ 
\end{split}
\right.
\end{equation}
Since $1 + \int_{t+}^TH_udS_u\geq 0$, $\Pas$, we conclude from \eqref{9181} that $\rho\in\mathcal C_{tT}$. Consequently,
\begin{displaymath}
\mathbb E_t\left[ U(T,\xi\rho)\right] = \max\left(\mathbb E_t\left[ U(T,\xi\rho^1)\right],   \mathbb E_t\left[ U(T,\xi\rho^2)\right]\right),
\end{displaymath}
i.e., $\mathcal U_t$ defined in \eqref{9182} is closed under pairwise maximization. Applying \cite[Theorem A.3, p. 324]{KSmmf} (or rather an extension of this theorem to extended-real valued random variables, see e.g., \cite[Proposition 2.6.1]{LaurenceBook})% (see also  and \cite{Nev75})
, we deduce that there exists a sequence $(\rho^n)_{n\in\mathbb N}\subset\mathcal C_{tT},$ such that 
\begin{equation}\nonumber
\lim\limits_{n\to\infty}\mathbb E_t\left[ U(T,\xi\rho^n)\right] = 
\esssup\limits_{\rho\in\mathcal C_{tT}}\mathbb E_t\left[ U(T,\xi\rho)\right].
\end{equation}
By Komlos'-type lemma, see e.g., \cite[Lemma A1.1]{DS}, we may find a sequence of convex combinations 
$\tilde \rho^n\in \conv(\rho^n, \rho^{n+1}, \dots)$, 
$n\in\mathbb N$, and a random variable $\hat g$, such that $(\tilde \rho^n_T)_{n\in\mathbb N}$ converges to $\hat g$, \Pas. By concavity of $U(T,\cdot)$, $(\tilde \rho^n_T)_{n\in\mathbb N}$ is also a maximizing sequence in the sense that 
\begin{equation}\label{314}
\lim\limits_{n\to\infty}\mathbb E_t\left[ U(T,\xi\tilde \rho^n)\right] = 
\esssup\limits_{\rho\in\mathcal C_{tT}}\mathbb E_t\left[ U(T,\xi\rho)\right].
\end{equation} 
%\footnote{Possibly, replace the whole blue part with the following lines. 
Similarly to the argument in \cite[Proof of Proposition 3.1]{KS}, \tbl{one  can  show that } $\mathcal C_{tT}$ is closed in probability. Therefore, $\hat g\in\mathcal C_{tT}$. Via \cite[Lemma 3.5]{Mostovyi2015}\footnote{\tbl{Section \ref{resMos15} contains a brief summary of the results from \cite{Mostovyi2015} used in this paper.}} and the symmetry between primal and dual problems in \cite{Mostovyi2015}, one can show that $U^{+}(T,\xi\tilde \rho^n)$, $n\in\mathbb N$, is a uniformly integrable sequence, so is $\mathbb E_t\left[U^{+}(T,\xi\tilde \rho^n)\right]$, $n\in\mathbb N$, and therefore we have
$$\esssup\limits_{\rho\in\mathcal C_{tT}}\mathbb E_t\left[ U(T,\xi \rho)\right] = \lim\limits_{n\to\infty}\mathbb E_t\left[ U(T,\xi\tilde \rho^n)\right]\leq \mathbb E_t\left[ U(T,\xi\hat g)\right],\quad \Pas,$$
and thus $\hat g$ is the maximizer to \eqref{primalProblem}. Further, if $\xi>0$, \Pas, the uniqueness of the maximizer follows from the strict concavity of $U(T,\cdot)$.
 %}
\end{proof}
%%%%Remark ODT KarKar start, possibly delete this Remark%%%%%%%%%%%

If the \tbl{risky asset} is continuous, in the proof of Lemma \ref{231}, after \eqref{314}, one can apply argument based on a version of the optional decomposition theorem for arbitrary filtrations, see  \cite[Optional Decomposition Theorem 1.4]{KarKar15}.  
%\tbl{
%\begin{Scholium}
%Let the assumptions of Lemma \ref{231} hold and the \tbl{risky asset} is continuous. %Then for every $\xi\in{\mathcal E^u_t}$, where ${\mathcal E^u_t}$ is defined in \eqref{effDomP}, there exists $\rho\in\mathcal C_{tT}$, such that $$u(\xi,t, T) =\mathbb E_t\left[U(T, \xi\rho)\right].$$ Moreover, {if} $\xi>0$, $\Pas$, then such a $\rho$ is unique.
%Then the assertions of Lemma \ref{231} hold as well\footnote{Dear referee, I trust your  judgement, but I also think that  keeping the  proof of this scholium as a remark  is better (as  the assertions of this scholium  follow trivially from  the previous  lemma)}. 
%\end{Scholium}
%}
\begin{Remark}
If \tbl{the \tbl{risky asset}s} have continuous paths, after \eqref{314}, one can  apply  an argument  based on   a version of the optional  decomposition theorem  for arbitrary filtrations, see \cite[Optional Decomposition Theorem 1.4]{KarKar15}. 
Suppose that  
  $\tilde X^n$, $n\geq 1$,  are nonnegative processes of the form $1 + \int_{t+}^sH^n_udR^0_u$, $s\in(t,T]$, and $\tilde X^n_t = 1$, for some $S$-integrable $H^n$'s, such that $$\tilde X^n_T\geq \tilde \rho^n,\quad \Pas,\quad n\in\mathbb N.$$
As in the proof of \cite[Lemma 4.2]{Mostovyi2015}, we pick a strictly positive $Y\in\mathcal Z_t$ (whose existence follows from Assumption \ref{NUPBR}) and consider $\tilde X^nY$, $n\in\mathbb N$. Let 
$$\mathcal T :=(\mathsf Q\cap (t,T))\cup \{t\}\cup \{T\},$$ where $\mathsf Q$ is the set of rational numbers.

Then, passing to convex combinations, we may find a subsequence of convex combinations ${\tilde{\tilde {X}}^n}$, $n\in\mathbb N$, such that ${\tilde {\tilde {X}}^n}Y$, $n\in\mathbb N$, is Fatou convergent to a supermartingale $VY$ on $\mathcal T$ and such that  
\begin{equation}\label{1241}
V_t\leq\liminf\limits_{s\searrow t, s\in\mathcal T}\liminf\limits_{n\to\infty}\mathbb E\left[ {\tilde{\tilde X}^n_s}\frac{Y_s}{Y_t}|\mathcal F_t\right]\leq 1\quad and \quad V_T\geq \hat g.
\end{equation} 
One  can show that $(V_sZ_s)_{s\in[t,T]}$ is a supermartingale for every $Z\in\mathcal Z_t$ (similarly to \cite[Lemma 4.2]{Mostovyi2015}). 
We stress that the supermartingale property is only required on $[t,T]$. One can see that\footnote{We use the convention $\frac{0}{0} = 0$.} for every supermartingale deflator $Z$, the process $\bar Z$ of the form
$$\bar Z_s = \frac{Z_{t + s}}{Z_t}, ~s\in[0, T-t],$$
is an element of $\mathcal Z_t$. We also set $$\mathcal G_s :=\mathcal F_{t + s},\quad s\in[0, T-t].$$
On the probability space $(\Omega, \mathcal F, \mathbb P)$ endowed with a filtration $(\mathcal G_s)_{s\in[0,T-t]}$,  $(V_s\bar Z_s)_{s\in[0, T-t]}$ is a supermartingale for every $\bar Z\in{\mathcal Z}_t$. Therefore $V$ satisfies the conditions  of (item 1 of) \cite[Optional Decomposition Theorem 1.4]{KarKar15}.  
As a result, there exists a decomposition of $V$ of the form
\begin{equation}\label{1242}
V_s = V_0 + \int_{0+}^s\bar H_ud R^0_u - A_s,\quad s\in(0,T-t],
\end{equation}
where $V_0$ is $\mathcal G_0 = \mathcal F_t$-measurable random variable, $\bar H$ is predictable and $S$-integrable and $A$ is a nondecreasing, right-continuous, adapted process, such that $A_0 = 0$.

We denote $\hat H_{t + s} :=\bar H_{s}$, $s\in[0, T-t]$. 
Therefore, using \eqref{1241} and \eqref{1242}, we deduce that 
$$\hat X_T = 1 + \int_{t+}^T\hat H_{u}dR^0_u \geq\hat g,$$
that is $\hat X_T\in\mathcal C_{tT}$, by the definition of $\mathcal C_{tT}$ in \eqref{defCtT}. %$\hat X\in\mathcal A(x,\pi, t,\delta)$. 
It follows from Assumption \ref{finiteness} and \cite[Lemma 3.5]{Mostovyi2015} that $\left(U^{+}(T, X_T)\right)_{X\in\mathcal X(x)}$ is uniformly integrable, therefore, so is $\left(\mathbb E_t\left[U^{+}(T, X_T)\right]\right)_{X\in\mathcal X(x)}$, and using the monotonicity of $U(T,\cdot)$, we get 
%$$\esssup\limits_{\tilde X\in\mathcal A(x,\pi, t, \delta)}\mathbb E\left[ U(\tilde X_T)|\mathcal F_t\right] = \lim\limits_{n\to\infty}\mathbb E\left[ U(\tilde X^n_T)|\mathcal F_t\right] \leq \mathbb E\left[ U(\hat g)|\mathcal F_t\right]\leq
 %\mathbb E\left[ U(\hat X_T)|\mathcal F_t\right].$$
 $$\esssup\limits_{\rho\in\mathcal C_{tT}}\mathbb E_t\left[ U(T,\xi\rho)\right] = \lim\limits_{n\to\infty}\mathbb E_t\left[ U(T,\xi\tilde \rho^n)\right] \leq \mathbb E_t\left[ U(T,\xi\hat g)\right]\leq
 \mathbb E_t\left[ U(T,\xi\hat X_T)\right].$$
We deduce that $\hat X_T$ is the maximizer to \eqref{primalProblem}. If $\xi>0$, \Pas, the uniqueness of the maximizer follows from the strict concavity of $U$.
\end{Remark}
\subsection{Structure of the dual process}
First, we set 
\begin{displaymath}
V(t,\omega,y) :=\sup\limits_{x>0}(U(t,\omega,x)-xy), \quad (t,\omega,y)\in[0,T]\times\Omega\times[0,\infty).
\end{displaymath}
%where we use the convention 
%$$\mathbb E_t\left[ V(\eta z_T) \right] = \infty,\quad on\quad\left\{\mathbb E_t\left[ V^+(\eta z_T) \right] = \infty\right\},$$
%With $\mathcal Y(1,0)$ denoting the set of supermartingale deflators for the base model, 
For $t\in[0,T]$, we set
\begin{equation}\label{defDt}
\mathcal D_t(y):=\{\eta\in m\mathcal F_t:~\eta\leq \tbl{yz_t}~for~some~z\in\mathcal Z_0\},\quad y\geq 0.
\end{equation}
i.e., $\mathcal D_t(y)$ is a subset \tbl{of} the closure of the convex solid hull of the elements of $\tbl{y\mathcal Z_0}$ sampled at time $t$.
%$$\mathcal Z_t :=\left\{z_{t'} \leq 1~on~[0,t]~and~z_{t'} = \frac{Z_{t'}}{Z_t}:~Z\in\mathcal  Z, t'\in(t,T]\right\},$$
% $$\mathcal N_t :=\left\{\eta\in\mathbf L^0_{++}(\mathcal F_t):~\left(\essinf\limits_{z\in\mathcal Z_t}\mathbb E\left[ V\left(\eta z_T\right)|\mathcal F_t\right]\right)^{+}\in\tbl{\mathbf L^1}\right\}.$$
We define
\begin{equation}\label{defNt}
\mathcal N_t :=\bigcup\limits_{y\geq 0}\mathcal D_t(y).
\end{equation}
For $t\in[0,T]$, let $\mathcal Z_t$ be given by \eqref{defZt} and we set
\begin{equation}\label{dualProblem}
v(\eta, t, T) :=\essinf\limits_{z\in\mathcal Z_t}\mathbb E_t\left[V(T,\eta z_T) \right],\quad \eta \in \mathcal N_t.
\end{equation}

\begin{Lemma}\label{lemDualExistence}
Under the conditions of Lemma \ref{231},  for every $\eta \in\mathcal N_t$, there exists $\widehat z\in\mathcal Z_t$, such that %\textcolor{black}{$\widehat z_T>0$, $\Pas$, and } 
\begin{equation}\label{eqnDualRep}
v(\eta, t, T) = \mathbb E\left[V(T,\eta \widehat z_T)|\mathcal F_t\right].
\end{equation}
\end{Lemma}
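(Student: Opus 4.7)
The plan is to mirror the argument of Lemma \ref{231}, interchanging primal and dual. The one genuine difficulty is that elements of $\mathcal Z_t$ are \emph{processes}, not merely terminal random variables, so a Komlos limit taken at time $T$ must be lifted to an actual supermartingale deflator.

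First, I would verify that $\mathcal V_t:=\{\mathbb E_t[V(T,\eta z_T)]:z\in\mathcal Z_t\}$ is closed under pairwise minimization: given $z^1,z^2\in\mathcal Z_t$, set $A:=\{\mathbb E_t[V(T,\eta z^1_T)]<\mathbb E_t[V(T,\eta z^2_T)]\}\in\mathcal F_t$ and consider the pasted process $z_s:=z^1_s1_A+z^2_s1_{A^c}$ for $s\in[t,T]$. Since $A\in\mathcal F_t\subset\mathcal F_s$, both the normalization $z_t\leq 1$ and the supermartingale property of $(z_sX_s)_{s\in[t,T]}$ for every $X\in\mathcal X(1)$ are inherited from $z^1,z^2$, so $z\in\mathcal Z_t$. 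The extended-real-valued version of \cite[Theorem A.3]{KSmmf} (as used in the proof of Lemma \ref{231}) then furnishes a sequence $(z^n)\subset\mathcal Z_t$ with $\mathbb E_t[V(T,\eta z^n_T)]\downarrow v(\eta,t,T)$, and \cite[Lemma A1.1]{DS} provides convex combinations $\tilde z^n\in\conv(z^n,z^{n+1},\dots)$ whose terminal values converge $\mathbb P$-a.s.\ to some $\hat h$; convexity of $V(T,\cdot)$ preserves the minimizing property.

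The hard step will be lifting $\hat h$ to an actual $\widehat z\in\mathcal Z_t$ with $\widehat z_T\geq \hat h$. Here I would argue as in the proof of \cite[Lemma 4.2]{Mostovyi2015} (compare also the Remark following the proof of Lemma \ref{231}): after a further pass to convex combinations, Fatou convergence on the countable dense set $\mathcal T:=(\mathsf Q\cap(t,T))\cup\{t,T\}$ delivers a c\`adl\`ag $\widehat z$ with $\widehat z_t\leq 1$ and $\widehat z_T\geq \hat h$, and the supermartingale property of $(\widehat z X)$ on $[t,T]$ for every $X\in\mathcal X(1)$ is inherited along the Fatou limit, placing $\widehat z\in\mathcal Z_t$. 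To close the argument, I would invoke Assumption \ref{finiteness} together with \cite[Lemma 3.5]{Mostovyi2015} to secure uniform integrability of $\{V^-(T,\eta\tilde z^n_T)\}_{n\in\mathbb N}$ (using the bound $\eta\leq y z^0_t$ for some $z^0\in\mathcal Z_0$ built into the definition of $\mathcal N_t$), then apply the conditional Fatou lemma with the continuity of $V(T,\cdot)$ and the monotonicity-based inequality $V(T,\eta\widehat z_T)\leq V(T,\eta\hat h)$ (since $V$ is decreasing and $\widehat z_T\geq\hat h$) to deduce
\[
v(\eta,t,T)=\lim_{n\to\infty}\mathbb E_t[V(T,\eta\tilde z^n_T)]\geq\mathbb E_t[V(T,\eta\hat h)]\geq\mathbb E_t[V(T,\eta\widehat z_T)].
\]
Since the reverse inequality $\mathbb E_t[V(T,\eta\widehat z_T)]\geq v(\eta,t,T)$ holds by definition of the essential infimum, equality prevails, giving \eqref{eqnDualRep}. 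The central obstacle throughout is the process-level Fatou construction in the third step; everything else follows the primal template.
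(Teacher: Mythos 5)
Your proposal follows essentially the same route as the paper: closure under pairwise minimization, extraction of a minimizing sequence, convex combinations, Fatou-convergence on a countable dense set of $[t,T]$ to obtain $\widehat z\in\mathcal Z_t$, uniform integrability of $\{V^-(T,\eta\cdot)\}$ via \cite[Lemma 3.5]{Mostovyi2015}, and the conditional Fatou lemma to close the gap. One small imprecision: since $T$ is included in the dense set, Fatou-convergence gives $\widehat z_T=\hat h$ (not merely $\widehat z_T\geq\hat h$), so there is no separate ``lifting'' step — the terminal value of the Fatou limit is already the Komlos limit of the $\tilde z^n_T$, which is what the paper uses directly.
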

\begin{proof}
%First, we observe that $\mathcal Z_t$ is closed with respect to pairwise minimization. Indeed
% if 
 Let us consider $\bar z^1$ and $\bar z^2$ in $\mathcal Z_t$, let $$A :=\{\omega:~ \mathbb E\left[V(T,\eta \bar z^1_T)|\mathcal F_t\right](\omega)<\mathbb E\left[V(T,\eta \bar z^2_T)|\mathcal F_t\right](\omega)\}\in\mathcal F_t$$ and 
%\begin{displaymath}
%z_{t'}:=\left\{ \begin{array}{lcl}
%1,&if&t'\in[0,t)\\
%z^1_{t'}1_A + z^2_{t'}1_{A^c},&if&t'\in[t,T]\\
%\end{array}
%\right..
%\end{displaymath}
$$\bar z_{t'}:=\bar z^1_{t'}1_A + \bar z^2_{t'}1_{A^c},\quad t'\in[t,T].$$
Then on $[t, T]$, $\tbl{\bar z}X$ is a supermartingale deflator for every $X\in\mathcal X(1)$. Therefore, $\bar z\in\mathcal Z_t.$ By direct computations, we have
$$\mathbb E\left[V(T,\eta \bar z_T)|\mathcal F_t\right] = \min\left(\mathbb E\left[V(T,\eta \bar z^1_T)|\mathcal F_t\right], \mathbb E\left[V(T,\eta \bar z^2_T)|\mathcal F_t\right] \right).$$
Therefore, 
from \cite[Proposition 2.6.1]{LaurenceBook}%\tbl{\cite[Lemma A.3, p.736]{EbKalBook}}%\cite[Theorem A.2.3, p. 215]{Pham09}
, we deduce that  there exists a sequence $(z^n)_{n\in\mathbb N}$, such that
\begin{equation}\label{9184}
\lim\limits_{n\to\infty} \mathbb E\left[V(T,\eta z^n_T)|\mathcal F_t \right] = v(\eta, t, T),\quad \mathbb P-a.s.
\end{equation}
By passing to convex combinations, we obtain a subsequence, which we do not relabel, such that \textcolor{black}{$\lim\limits_{n\to\infty}z^n = \widehat z$, where the limit is considered in the Fatou sense (in the terminology of \cite[Definition 5.2]{FK}) on the set of rational numbers on $(t,T)$ augmented with $t$ and $T$}. Note that by convexity of $V(T,\cdot)$, such a subsequence will also satisfy \eqref{9184}.  It follows from the definition of $\mathcal N_t$ and \cite[Lemma 3.5]{Mostovyi2015} that $\left(V^{-}(T,\eta z^n_T)\right)_{n\in\mathbb N}$ is uniformly integrable. Therefore, we get 
%$$\liminf\limits_{n\to\infty}\mathbb E\left[ V\left(\eta z^n_T \right)|\mathcal F_t\right]\geq 
%\mathbb E\left[ V\left(\eta \liminf\limits_{n\to\infty}z^n_T \right)|\mathcal F_t\right] = 
%\mathbb E\left[ V\left(\eta \widehat z_T\right)|\mathcal F_t\right].
%$$ 
\begin{equation}\label{9185}
\lim\limits_{n\to\infty}\mathbb E\left[ V\left(T,\eta z^n_T \right)|\mathcal F_t\right]\geq
\mathbb E\left[ V\left(T,\eta \widehat z_T\right)|\mathcal F_t\right].
\end{equation}
By direct computations, %\footnote{\textcolor{black}{Possibly, expand this part or even make a lemma about this: first establish supermartingale property for rational $r$ and $q$, i.e., $\mathbb E[\widehat z_rX_r|\mathcal F_q] \leq z_qX_q$, then extend it to every $t'\in[t,T]$ by Fatou's lemma.}} 
 we deduce that for every $X\in\mathcal X(1)$, $(\widehat z_{t'}X_{t'})_{t'\in[t,T]}$ is a nonnegative supermartingale on $[t,T]$, such that $\tbl{\widehat z_t}\leq 1$ by properties of Fatou-convergence. Therefore
$\widehat z\in\mathcal Z_t$. Via \eqref{9184} and \eqref{9185}, we conclude that \eqref{eqnDualRep}  holds.
%\textcolor{black}{From optimality of $\widehat z$ and since $V$ satisfies the Inada conditions, we deduce that $\widehat z_T >0$, $\Pas$\footnote{\textcolor{black}{Try to avoid positivity, if possible.}}.}
\end{proof}
Let 
\begin{equation}\label{defBt+}
\mathcal B^+_t :=\left\{X\in m\mathcal F_t:~X\in[0,1],~\Pas\right\}.
\end{equation}
and
\begin{equation}\label{dualEffdom}
\tbl{\mathcal E^v_t} := \left\{\eta\in\mathcal N_t: ~there~exists~z\in\mathcal Z_{t}, ~such~that~\mathbb E\left[V^+(T,\eta z_T)\right]<\infty\right\},
\end{equation}
which corresponds to the effective domain of $v(\cdot, t,T)$. 
\begin{Lemma}\label{lemNconvex}
Let the condition of Lemma \ref{231} hold, $\lambda\in\mathcal B^{+}_t$ and $\eta^1$ and $\eta^2$ are some elements of $\tbl{\mathcal E^v_t}$, then 
$\eta := \lambda \eta^1+(1 - \lambda)\eta^2\in\tbl{\mathcal E^v_t}$ and we have
\begin{equation}
\label{2281}
v(\eta,t, T)\leq \lambda v(\eta^1, t, T)+ (1-\lambda)v(\eta^2,t, T).
\end{equation}
\end{Lemma}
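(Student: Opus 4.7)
The plan is to construct an explicit candidate $\widehat z \in \mathcal Z_t$ realizing a tight upper bound for $v(\eta, t, T)$ via a suitably weighted combination of the dual optimizers associated with $\eta^1, \eta^2$. By Lemma \ref{lemDualExistence}, let $\widehat z^i \in \mathcal Z_t$ attain $v(\eta^i, t, T) = \mathbb E_t\left[V(T, \eta^i \widehat z^i_T)\right]$, and set (with the convention $0/0 = 0$)
\begin{equation*}
\widehat z_s := \frac{\lambda \eta^1 \widehat z^1_s + (1-\lambda)\eta^2 \widehat z^2_s}{\eta}\mathbf{1}_{\{\eta > 0\}}, \quad s \in [t,T].
\end{equation*}
The reason for this particular weighting is that $\eta \widehat z_T = \lambda(\eta^1 \widehat z^1_T) + (1-\lambda)(\eta^2 \widehat z^2_T)$ $\Pas$; this is immediate on $\{\eta > 0\}$, while on $\{\eta = 0\}$ both sides vanish, as on $\{0<\lambda<1\}$ one has $\eta^1 = \eta^2 = 0$, and on $\{\lambda \in \{0,1\}\}$ the nonzero coefficient is paired with a vanishing $\eta^i$.

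Next, I would verify $\widehat z \in \mathcal Z_t$. On $\{\eta > 0\}$, $\widehat z_t \leq (\lambda \eta^1 + (1-\lambda)\eta^2)/\eta = 1$ via the bounds $\widehat z^i_t \leq 1$. For the deflator property, fix $X \in \mathcal X(1)$ and $t \leq s \leq u \leq T$; since $\lambda, \eta^1, \eta^2, \eta$ are $\mathcal F_t \subset \mathcal F_s$-measurable, take-out-what-is-known yields
\begin{equation*}
\mathbb E\left[\widehat z_u X_u \bigm| \mathcal F_s\right] = \frac{\mathbf 1_{\{\eta > 0\}}}{\eta}\left(\lambda \eta^1 \mathbb E[\widehat z^1_u X_u | \mathcal F_s] + (1-\lambda)\eta^2 \mathbb E[\widehat z^2_u X_u | \mathcal F_s]\right) \leq \widehat z_s X_s,
\end{equation*}
invoking the supermartingale property of each $\widehat z^i X$ on $[t,T]$. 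Combining $\widehat z \in \mathcal Z_t$ with the identity for $\eta \widehat z_T$, convexity of $V(T,\cdot)$ gives $V(T,\eta \widehat z_T) \leq \lambda V(T,\eta^1 \widehat z^1_T) + (1-\lambda) V(T,\eta^2 \widehat z^2_T)$ $\Pas$, and the $\mathcal F_t$-measurability of $\lambda$ delivers, upon taking $\mathbb E_t[\cdot]$ (well-defined by Assumption \ref{finiteness} and the subsequent Remark),
\begin{equation*}
v(\eta, t, T) \leq \mathbb E_t\left[V(T,\eta \widehat z_T)\right] \leq \lambda v(\eta^1, t, T) + (1-\lambda) v(\eta^2, t, T),
\end{equation*}
which is \eqref{2281}.

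Finally, to show $\eta \in \mathcal E^v_t$, I would rerun the construction with $\widehat z^i$ replaced by deflators $z'_i \in \mathcal Z_t$ afforded by the definition of $\mathcal E^v_t$, i.e., $\mathbb E[V^+(T, \eta^i (z'_i)_T)] < \infty$, obtaining an analogous $z' \in \mathcal Z_t$ satisfying $\eta z'_T = \lambda \eta^1 (z'_1)_T + (1-\lambda) \eta^2 (z'_2)_T$. Combining convexity of $V(T, \cdot)$ with the elementary inequality $(\alpha a + \beta b)^+ \leq \alpha a^+ + \beta b^+$ for $\alpha, \beta \geq 0$ then yields $\mathbb E[V^+(T, \eta z'_T)] \leq \mathbb E[V^+(T, \eta^1 (z'_1)_T)] + \mathbb E[V^+(T, \eta^2 (z'_2)_T)] < \infty$. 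The main technical nuisance is careful bookkeeping on $\{\eta = 0\}$ together with the verification that the $\mathcal F_t$-measurable weights pass cleanly through the supermartingale inequality; beyond these points, the argument is a routine check that the explicit mixture inherits both the deflator structure of $\mathcal Z_t$ and the convexity properties of $V(T,\cdot)$ needed.
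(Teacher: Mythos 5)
Your argument follows essentially the same route as the paper's: construct the weighted mixture of the dual optimizers with $\mathcal F_t$-measurable coefficients, verify it lies in $\mathcal Z_t$, observe the key identity $\eta\widehat z_T = \lambda\eta^1\widehat z^1_T + (1-\lambda)\eta^2\widehat z^2_T$ on all of $\Omega$, and finish with pointwise convexity of $V(T,\cdot)$ (resp.\ $V^+(T,\cdot)$). The two small deviations you make are both sound and arguably cleaner: setting $\widehat z = 0$ on $\{\eta=0\}$ (the paper instead uses $\lambda\widehat z^1 + (1-\lambda)\widehat z^2$ there; either choice trivially preserves the supermartingale-deflator property), and establishing $\mathcal E^v_t$-membership via arbitrary deflators $z'_i$ from the definition rather than re-using the optimal $\widehat z^i$'s — the latter requires one to first argue that $V^+(T,\eta^i\widehat z^i_T)$ is integrable, which the paper does implicitly through the uniform integrability of $V^-$ from \cite[Lemma 3.5]{Mostovyi2015}, while your version avoids that.

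There is one genuine gap: to conclude $\eta\in\mathcal E^v_t$ you must also verify $\eta\in\mathcal N_t$, which is part of the definition \eqref{dualEffdom} and which the paper checks as its opening step. You only address the $V^+$-integrability clause. The omission is easily repaired — since $\eta^i\in\mathcal D_t(y^i)$, one has $\eta^i\le y^i z^i_t$ for some $z^i\in\mathcal Z_0$, whence $\eta\le\eta^1+\eta^2\le y^1 z^1_t + y^2 z^2_t = (y^1+y^2)\,\tfrac{y^1z^1_t+y^2z^2_t}{y^1+y^2}$, and by convexity of $\mathcal Z_0$ the last factor is the time-$t$ value of an element of $\mathcal Z_0$, so $\eta\in\mathcal D_t(y^1+y^2)\subset\mathcal N_t$ — but the step should be written out, as $\mathcal N_t$ is not literally a cone or a convex set by inspection of its definition.
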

\begin{proof}
As $\eta^i\in\mathcal D_t(y^i)$, for some $y^i>0$, $i = 1,2$, one can see that $\eta\leq Y_t$ for some $Y\in{(y^1 + y^2)\mathcal Z_0}$. Next we will show \eqref{2281}. 
By Lemma \ref{lemDualExistence}, we deduce the existence of $\hat z^1$ and $\hat z^2$, the optimizers to \eqref{dualProblem} corresponding to $\eta^1$ and $\eta^2$, respectively. 
With \begin{equation}\nonumber
\begin{split}
z_s:=&\frac{\lambda \eta^1\hat z^1_s + (1-\lambda)\eta^2 \hat z^2_s}{\eta}{1_{\{\eta\neq 0\}}+\left(\lambda \hat z^1_s + (1-\lambda)\hat z^2_s\right)1_{\{\eta = 0\}}},\quad s\in[t,T],
\end{split}
\end{equation}
by direct computations, one can see that 
$(z_{t'}X_{t'})_{t'\in[t,T]}$ is a supermartingale for every $X\in\mathcal X(1)$. By construction, $z_t \leq 1$. We conclude that $z\in\mathcal Z_t$. 
\tbl{To show that $\eta\in\tbl{\mathcal E^v_t}$, first  we observe that on $\{\eta = 0\}$, we have  $\lambda \hat  \eta^1 = (1-\lambda)\eta^2 = 0$. Therefore, we obtain  
\begin{displaymath}
\begin{split}
\eta z_T  &= \eta z_T 1_{\{\eta >0\}} + 0 \cdot  z_T1_{\{\eta = 0\}} \\
&= \left(\lambda \eta^1\hat z^1_T + (1-\lambda)\eta^2\hat z^2_T\right)1_{\{\eta >0\}} + 0\cdot \left(\lambda \hat z^1_s + (1-\lambda)\hat z^2_s\right) 1_{\{\eta = 0\}} \\
&= \left(\lambda \eta^1\hat z^1_T + (1-\lambda)\eta^2\hat z^2_T\right)1_{\{\eta >0\}} +  \left(\lambda \eta^1\hat z^1_s + (1-\lambda)\eta^2\hat z^2_s\right) 1_{\{\eta = 0\}} \\
&= \lambda \eta^1\hat z^1_T + (1-\lambda)\eta^2\hat z^2_T.
\end{split}
\end{displaymath}
Therefore,  using the convexity of $V^{+}(T,\cdot)$, we  get
\begin{displaymath}
\begin{split}
\mathbb  E_t\left[ V^+(T, \eta  z_T)\right] &= \mathbb  E_t\left[ V^+(T, \lambda \eta^1\hat z^1_T + (1-\lambda)\eta^2\hat z^2_T)\right]   \\
&\leq 
 \lambda\mathbb  E_t\left[ V^+(T,  \eta^1\hat z^1_T)\right] + 
  (1-\lambda)\mathbb  E_t\left[ V^+(T, \eta^2\hat z^2_T)\right].
  % \\
% & \leq 
%  \mathbb  E_t\left[ V^+(T,  \eta^1\hat z^1_T)\right] + 
%  \mathbb  E_t\left[ V^+(T, \eta^2\hat z^2_T)\right]. 
  \end{split}
\end{displaymath}
Therefore, as $\eta^1$  and $\eta^2$  are in $\mathcal E^v_t$, we deduce  that so  is  $\eta$. 
} %write $\mathbb E\left[V(\eta z_T)\right] = \mathbb E\left[\mathbb E_t\left[V(\eta z_T)\right]\right],$ and then 
%and we get
%\begin{displaymath}
%\begin{split}
%&\mathbb E\left[ V^{+}(T,\eta z_T)\right] = \mathbb E\left[V^{+}(T,\lambda \eta^1\hat z^1_T + (1-\lambda)\eta^2\hat z^2_T) \right]\\
%&\leq \mathbb E\left[\lambda V^{+}(T, \eta^1\hat z^1_T) + (1-\lambda)V^{+}(T,\eta^2\hat z^2_T) \right]\leq \mathbb E\left[ V^{+}(T, \eta^1\hat z^1_T)\right] + \mathbb E\left[ V^{+}(T,\eta^2\hat z^2_T) \right]<\infty.
%\end{split}
%\end{displaymath}
Likewise, using the convexity of $V$, we get
\begin{equation}\label{421}
\begin{split}
&v(\eta, t\tbl{, T}) \leq \mathbb E_t\left[V\left(T, \eta z_T\right)\right]  = \mathbb E_t\left[ V\left(T,\lambda \eta^1\hat z^1_T + (1-\lambda)\eta^2\hat z^2_T\right)\right] \\
&\leq \lambda\mathbb E_t\left[ V\left(T,\eta^1\hat z^1_T\right)\right] + (1-\lambda)\mathbb E_t\left[ V\left(T,\eta^2\hat z^2_T\right)\right]  = \lambda v(\eta^1,t, T) + (1-\lambda)v(\eta^2,t, T).
\end{split}
\end{equation}
Thus, \eqref{2281} holds. 
% and 
%therefore, as $f(x) :=\max(x,0)$, $x\in\mathbb R$, is a sublinear function, we~get
%%\begin{displaymath}
%%\mathbb E\left[ v(\eta^11_A + \eta^21_{A^c},t)\right] = 
%%\mathbb E\left[ v(\eta^11_A + \eta^21_{A^c},t)(1_A + 1_{A^c})\right]
%%\end{displaymath}
%\begin{displaymath}\begin{split}
%\mathbb E\left[ \max(v(\eta,t),0)\right] &\leq 
%\mathbb E\left[  \lambda\max(v(\eta^1,t),0) + (1-\lambda)\max(v(\eta^2,t),0)\right]\\
%&\leq 
%\mathbb E\left[  \max(v(\eta^1,t),0)\right]+\mathbb E\left[\max(v(\eta^2,t),0)1_{A^c}\right]<\infty.
%\end{split}
%\end{displaymath}
%Therefore $\eta\in\mathcal N_t$.

\end{proof}

%Lemma \ref{lemSubConjugacy} implies that  for every $\eta\in\mathcal N_t$, we have 
%$$u(X^\pi_t, t, 0) \leq v(\eta, t, 0) + X^\pi_t\eta,$$ i.e., {\it subconjugacy} between $u(\cdot, t, 0)$ and $v(\cdot, t,0)$. The following lemmas show conjugacy. 
An important role in the proofs below will be played by the set
\begin{equation}\label{defGt}
\mathcal G_t :=\left\{\eta z_T:~\eta \in \mathcal N_t, z\in\mathcal Z_t \right\},
\end{equation}
\tbl{
which is characterized in the Lemma \ref{lemG} below. The proof of Lemma \ref{lemG} is straightforward and is omitted.}
\begin{Lemma}\label{lemG}
Let the condition of Lemma \ref{231} hold. \tbl{Then the set $\mathcal G_t$ is closed under convex concatenations in the following sense: for a weight $\lambda\in\mathcal B^{+}_t$,  and $\eta^iz^i\in\mathcal G_t$, $i = 1,2$ we set}
\begin{equation}\label{8231}
\eta :=\lambda \eta^1 +(1-\lambda)\eta^2.%\quad 
%and \quad \lambda \eta^1z^1 + (1 - \lambda)\eta^2z^2\in\mathcal G_t,
\end{equation}
\tbl{Then $\eta\in\mathcal N_t$,} and we have: 
\begin{equation}\label{311}
\begin{split}
z_s:=&\frac{\lambda \eta^1z^1_s + (1-\lambda)\eta^2 z^2_s}{\eta}{1_{\{\eta\neq 0\}}+\left(\lambda z^1_s + (1-\lambda)z^2_s\right)1_{\{\eta = 0\}}},\quad s\in[t,T],\\
&is~an~element~of~\mathcal Z_t,\\
\end{split}
\end{equation}
\begin{equation}\label{312}
\lambda \eta^1z^1_T + (1 - \lambda)\eta^2z^2_T =\eta z_T\in\mathcal G_t.
\end{equation}
In particular, \eqref{8231} holds if $\lambda$ is a constant taking values in $[0,1]$, i.e., $\mathcal G_t$ is a convex set and \tbl{if} $\eta^i$'s are in $\tbl{\mathcal E^v_t}$, then $\eta\in\tbl{\mathcal E^v_t}$.
\end{Lemma}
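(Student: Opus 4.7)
The plan is to verify in turn the three conclusions: $\eta\in\mathcal N_t$, the candidate $z$ of \eqref{311} is in $\mathcal Z_t$, and identity \eqref{312}; the convexity statement and preservation of $\mathcal E^v_t$ membership will then follow as corollaries. Since $\eta^i z^i\in\mathcal G_t$, there exist $y^i\geq 0$ and $\tilde z^i\in\mathcal Z_0$ with $\eta^i\leq y^i\tilde z^i_t$. Using $\lambda\in[0,1]$ almost surely and the convexity of $\mathcal Z_0$ (a convex combination of nonnegative supermartingales is a supermartingale, and the bound at time $0$ is preserved), the process $\bar z_s:=(y^1\tilde z^1_s+y^2\tilde z^2_s)/(y^1+y^2)$ lies in $\mathcal Z_0$ and
\begin{equation*}
\eta\leq \lambda y^1\tilde z^1_t+(1-\lambda)y^2\tilde z^2_t\leq y^1\tilde z^1_t+y^2\tilde z^2_t=(y^1+y^2)\bar z_t,
\end{equation*}
so $\eta\in\mathcal D_t(y^1+y^2)\subset\mathcal N_t$.

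Next I would verify that $z$ as defined in \eqref{311} lies in $\mathcal Z_t$. Nonnegativity is immediate. For $z_t\leq 1$: on $\{\eta>0\}$ the numerator is bounded by $\lambda\eta^1+(1-\lambda)\eta^2=\eta$ because $z^i_t\leq 1$, while on $\{\eta=0\}$ the expression $\lambda z^1_t+(1-\lambda)z^2_t$ is itself a convex combination of quantities bounded by $1$. For the supermartingale property of $(z_s X_s)_{s\in[t,T]}$ with $X\in\mathcal X(1)$, the $\mathcal F_t$-measurable coefficients $\lambda,\eta,\eta^i$ and the indicators of $\{\eta=0\}$ and $\{\eta>0\}$ can be pulled out of conditional expectations; then $\mathbb E[z^i_{s_2}X_{s_2}\mid\mathcal F_{s_1}]\leq z^i_{s_1}X_{s_1}$ for $t\leq s_1\leq s_2\leq T$ (since $z^i\in\mathcal Z_t$) yields $\mathbb E[z_{s_2}X_{s_2}\mid\mathcal F_{s_1}]\leq z_{s_1}X_{s_1}$.

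Identity \eqref{312} then reduces to a case split. On $\{\eta>0\}$ the definition of $z$ gives $\eta z_T=\lambda\eta^1 z^1_T+(1-\lambda)\eta^2 z^2_T$ directly. On $\{\eta=0\}$, nonnegativity of $\lambda\eta^1$ and $(1-\lambda)\eta^2$ forces each to vanish, so both sides equal zero. The final ``in particular'' clause is then immediate: taking $\lambda$ a deterministic constant in $[0,1]$ (which lies in $\mathcal B^+_t$) yields convexity of $\mathcal G_t$, and the preservation of $\mathcal E^v_t$ membership is precisely Lemma \ref{lemNconvex}. The only genuinely subtle point, and the one that forces the piecewise form of \eqref{311}, is the handling of $\{\eta=0\}$: there the quotient is ill-defined, but the additive correction keeps $z$ a bona fide element of $\mathcal Z_t$ without altering the product $\eta z_T$.
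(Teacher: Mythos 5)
Your proof is correct and follows essentially the same route as the paper's: the paper deduces $\eta\in\mathcal N_t$ from $\eta\le\eta^1+\eta^2$ (which is exactly the solidity plus closure-under-sums argument you spell out via convexity of $\mathcal Z_0$), defers the verification that $z\in\mathcal Z_t$ to the identical direct computation already appearing in the proof of Lemma~\ref{lemNconvex}, notes that \eqref{312} follows from the definitions (your case split on $\{\eta>0\}$ versus $\{\eta=0\}$), and obtains convexity of $\mathcal G_t$ and preservation of $\mathcal E^v_t$ as you do. The only difference is that you write out the details the paper declares ``trivial'' or inherits from Lemma~\ref{lemNconvex}.
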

\begin{proof} Let us consider $\eta$ and $z$ are defined in \eqref{8231} and \eqref{311}, respectively.
As $\eta\leq \eta^1 + \eta^2$, we deduce (trivially) that $\eta\in\mathcal N_t$.
It follows from Lemma \ref{lemNconvex} that if, additionally, $\eta^1$ and $\eta^2$ are some elements of $\tbl{\mathcal E^v_t}$, then $\eta\in\tbl{\mathcal E^v_t}$. Following the proof of the same lemma, $z\in\mathcal Z_t$
The validity of $$\lambda \eta^1z^1_T + (1 - \lambda)\eta^2z^2_T =\eta z_T$$ is a  consequence of the definitions of $\eta$ and $z$. As, by the argument above, $\eta\in\mathcal N_t$ and $z\in\mathcal Z_t$, we deduce that \eqref{312} holds. Thus, in particular, $\mathcal G_t$ is convex. %This concludes the proof.
\end{proof}
%%%%%%%%%%%%%%%%%%%%
\subsection{Conjugacy of $u$ and $v$}
We recall that $\mathcal C(x)$, $x>0$, are defined in \eqref{defCt}. The goal is to show that 
$$u(\xi, t, T) = \essinf\limits_{\eta \in \mathcal N_t}\left( v(\eta, t, T) + \xi\eta\right),\quad \xi\in \bigcup\limits_{x>0}\mathcal C(x)$$and
$$v(\eta, t, T) = \esssup\limits_{\xi\in\bigcup\limits_{x>0}\mathcal C_t(x)}\left( u(\xi, t, T) - \xi \eta \right),\quad \eta \in\mathcal N_t.$$
\begin{Lemma}\label{lemSubConjugacy} 
Let the condition of Lemma \ref{231} hold. Then  for every $\eta\in\mathcal N_t$ and $\xi\in\bigcup\limits_{x\geq 0}\mathcal C_t(x)$, we have:
$$u(\xi,t, T) \leq v(\eta, t, T) + \xi\eta.$$
%equivalently
%$$u(\xi,t, T)  \leq \essinf\limits_{\eta\in\mathcal N_t}\left(v(\eta, t, T) + \xi\eta\right).$$
%or
%$$\ \essinf \mathbb E_t\left[U(X_t + \rho) \right]  \leq \essinf\limits_{\eta\in\mathcal N_t}\left(v(\eta, t) + X_t\eta\right),$$
\end{Lemma}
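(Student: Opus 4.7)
The plan is a standard weak-duality argument, adapted to the conditional / dynamic setting and anchored on the pointwise Fenchel--Young inequality. Fix $\xi\in\mathcal C_t(x)$ and $\eta\in\mathcal D_t(y)$ for some $x,y\geq 0$, and pick arbitrary $\rho\in\mathcal C_{tT}$ and $z\in\mathcal Z_t$. Since $V(T,\omega,\cdot)$ is the convex conjugate of $U(T,\omega,\cdot)$, the inequality
$$U(T,\xi\rho)\leq V(T,\eta z_T)+\xi\rho\,\eta z_T$$
holds \Pas. Taking $\mathbb E_t[\cdot]$ on both sides (using the paper's convention on extended conditional expectations) and pulling the $\mathcal F_t$-measurable nonnegative factor $\xi\eta$ out of the cross term yields
$$\mathbb E_t\left[U(T,\xi\rho)\right]\leq \mathbb E_t\left[V(T,\eta z_T)\right]+\xi\eta\,\mathbb E_t\left[\rho z_T\right].$$

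The key remaining step is to verify $\mathbb E_t[\rho z_T]\leq 1$. From the definition of $\mathcal C_{tT}$ in \eqref{defCtT} I may pick an $R^0$-integrable $H$ with $\rho\leq 1+\int_{t+}^T H_u\,dR^0_u$; after interpreting admissibility in the usual way (truncating $H$ by the first time the candidate process would hit zero if necessary, which does not affect the dominating inequality at $T$ since $\rho\geq 0$), the process $X$ defined by $X_s:=1$ for $s\leq t$ and $X_s:=1+\int_{t+}^s H_u\,dR^0_u$ for $s\in(t,T]$ lies in $\mathcal X(1)$. Since $z\in\mathcal Z_t$, the process $(z_sX_s)_{s\in[t,T]}$ is a nonnegative supermartingale, so
$$\mathbb E_t\left[\rho z_T\right]\leq \mathbb E_t\left[X_T z_T\right]\leq X_t z_t=z_t\leq 1.$$

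Substituting back gives $\mathbb E_t[U(T,\xi\rho)]\leq \mathbb E_t[V(T,\eta z_T)]+\xi\eta$. Because the additive $\xi\eta$ term depends on neither $\rho$ nor $z$, I can first take $\esssup_{\rho\in\mathcal C_{tT}}$ on the left-hand side and then $\essinf_{z\in\mathcal Z_t}$ on the right-hand side to obtain $u(\xi,t,T)\leq v(\eta,t,T)+\xi\eta$, which is the desired claim.

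The step I expect to be the most delicate is the admissibility argument needed to realize $\rho$ as the terminal value of a process in $\mathcal X(1)$, so that the supermartingale property of $zX$ becomes available. This is a routine truncation-at-first-hitting-time-of-zero argument, but it has to be handled carefully because the nonnegativity encoded in $\mathcal C_{tT}$ is imposed only at time $T$, not along the whole path. All remaining ingredients — the pointwise Fenchel--Young inequality, interchanging $\mathbb E_t[\cdot]$ with the $\mathcal F_t$-measurable multipliers $\xi$ and $\eta$, and the order of the essential suprema and infima — are routine.
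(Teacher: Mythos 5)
Your proof is correct and follows essentially the same route as the paper: pointwise Fenchel--Young, conditioning on $\mathcal F_t$ and pulling out the $\mathcal F_t$-measurable factors, and then bounding $\mathbb E_t[\rho z_T]\leq 1$ via the supermartingale-deflator property of $z\in\mathcal Z_t$. The only difference is cosmetic: the paper asserts $\mathbb E_t[\rho z_T]\leq 1$ in one line, whereas you spell out the realization of $\rho$ as (dominated by) the terminal value of a nonnegative process in $\mathcal X(1)$ started at time $t$ so that the supermartingale property applies — a detail the paper leaves implicit.
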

\begin{proof}
Let $\xi\in\bigcup\limits_{x\geq 0}\mathcal C_{t}(x)$ and $\eta\in\mathcal N_t$ be fixed. We need to show that
\begin{equation}\label{6141}\esssup\limits_{\rho\in\mathcal C_{tT}}\mathbb E\left[ U\left(T,\xi\rho\right)|\mathcal F_t\right] -\xi\eta \leq \essinf\limits_{z\in\mathcal Z_t}\mathbb E\left[V\left(T,\eta z_T\right)|\mathcal F_t \right].
\end{equation}
From the definition of the conjugate function, for every $\rho\in\mathcal C_{tT}$ and every $z\in\mathcal Z_t$, we have
\begin{equation}\label{6142}
\begin{split}
\mathbb E_t\left[ U\left(T,\xi\rho\right)\right]&\leq  \mathbb E_t\left[V\left(T,\eta z_T\right) + \xi\rho\eta z_T\right] %=  \mathbb E_t\left[V\left(T,\eta z_T\right) \right] + \mathbb E_t\left[\xi\rho\eta z_T\right]
.\\
\end{split}
\end{equation}
As $z\in\mathcal Z_t$, $\mathbb E_t\left[\rho{z_T}\right]\leq 1$ and we have
\begin{equation}\label{8241}
\mathbb E_t\left[\xi\rho\eta z_T\right] = \xi{\eta}\mathbb E_t\left[\rho{z_T}\right]\leq \xi{\eta}.
\end{equation}
Combining  \eqref{6142} and \eqref{8241}, we get
$$\mathbb E_t\left[ U\left(T,\xi\rho\right)\right] \leq 
\mathbb E_t\left[V\left(T,\eta{z_T}\right)  \right]
+{\eta}\xi,
$$
which implies
\eqref{6141}. This completes the proof of the lemma.
\end{proof}
\begin{Lemma}\label{996} Let the condition of Lemma \ref{231} hold and $\eta \in\mathcal N_t$. Then we have 
\textcolor{black}{
\begin{equation}\label{8282}
\lim\limits_{x\to \infty}\essinf\limits_{z\in\mathcal Z_t}
\esssup\limits_{\xi\in\mathcal C_T(x)}
\mathbb E_t\left[\left(U(T,\xi) - \xi\eta z_T\right)\right] \geq \essinf\limits_{z\in\mathcal Z_t}\mathbb E_t\left[V(T,\eta z_T) \right] = v(\eta, t,T).
\end{equation}
Further,} for every $A\in\mathcal F_t$, we have
\begin{equation}\label{9232}
\lim\limits_{x\to \infty}\inf\limits_{z\in\mathcal Z_t}
\sup\limits_{\xi\in\mathcal C_T(x)}
\mathbb E\left[\left(U(T,\xi) - \xi\eta z_T\right)1_A\right] \geq \inf\limits_{z\in\mathcal Z_t}\mathbb E\left[V(T,\eta z_T)1_A \right].
\end{equation}
\end{Lemma}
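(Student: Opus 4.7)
My approach is to construct, for each $x > 0$, an explicit $\hat\xi^x \in \mathcal{C}_T(x)$ whose Lagrangian value $\mathbb{E}_t[U(T,\hat\xi^x) - \hat\xi^x \eta z_T]$ admits a lower bound that is uniform over $z \in \mathcal{Z}_t$ and converges to $v(\eta,t,T)$ as $x \to \infty$. Reducing to the nontrivial case $\eta \in \mathcal{E}^v_t$ (otherwise $v(\eta,t,T) = +\infty$ and the claim follows by a separate approximation argument), Lemma \ref{lemDualExistence} provides a dual optimizer $\hat z \in \mathcal{Z}_t$ with $v(\eta,t,T) = \mathbb{E}_t[V(T,\eta \hat z_T)]$. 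I set $\hat\xi := -V'(T, \eta \hat z_T)$, the pointwise Fenchel optimizer satisfying $V(T,\eta \hat z_T) = U(T,\hat\xi) - \hat\xi \eta \hat z_T$ identically, and truncate by $\hat\xi^x := \hat\xi \wedge x$. Since the constant process $X \equiv x$ lies in $\mathcal{X}(x)$, we have $\hat\xi^x \in \mathcal{C}_T(x)$.

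The critical estimate is the first-order optimality inequality $\eta\,\mathbb{E}_t[\hat\xi \hat z_T] \geq \eta\,\mathbb{E}_t[\hat\xi z_T]$ for every $z \in \mathcal{Z}_t$. By convexity of $\mathcal{Z}_t$, $z_\lambda := (1-\lambda)\hat z + \lambda z \in \mathcal{Z}_t$ for $\lambda \in [0,1]$, and optimality of $\hat z$ forces $\lambda^{-1}[\mathbb{E}_t[V(T,\eta (z_\lambda)_T)] - \mathbb{E}_t[V(T,\eta \hat z_T)]] \geq 0$. Convexity of $V$ makes the integrand monotone non-decreasing in $\lambda > 0$, with a.s.~limit $V'(T, \eta \hat z_T)\,\eta (z_T - \hat z_T) = -\hat\xi \eta (z_T - \hat z_T)$ as $\lambda \downarrow 0$. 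Conditional monotone convergence (with the bound at $\lambda = 1$ given by the integrable $\mathbb{E}_t[V(T,\eta z_T)] - v(\eta,t,T)$) then yields $\mathbb{E}_t[-\hat\xi \eta (z_T - \hat z_T)] \geq 0$, which rearranges to the claim.

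Combining with $\hat\xi^x \leq \hat\xi$ and $\eta, z_T \geq 0$ gives $\eta\,\mathbb{E}_t[\hat\xi^x z_T] \leq \eta\,\mathbb{E}_t[\hat\xi \hat z_T]$ uniformly in $z$, hence
\[
\mathbb{E}_t[U(T,\hat\xi^x) - \hat\xi^x \eta z_T] \geq \mathbb{E}_t[U(T,\hat\xi^x)] - \eta\,\mathbb{E}_t[\hat\xi \hat z_T],
\]
with $z$-independent right-hand side. Since $\hat\xi^x$ is feasible for the inner esssup, taking $\essinf_z$ preserves the lower bound. Letting $x \to \infty$, $\hat\xi^x \uparrow \hat\xi$ gives $U(T,\hat\xi^x) \uparrow U(T,\hat\xi)$; conditional monotone convergence (with positive-part integrability from Assumption \ref{finiteness} and \cite[Lemma 3.5]{Mostovyi2015}) then delivers
\[
\lim_{x \to \infty}\essinf_{z \in \mathcal{Z}_t} \esssup_{\xi \in \mathcal{C}_T(x)} \mathbb{E}_t[U(T,\xi) - \xi \eta z_T] \geq \mathbb{E}_t[U(T,\hat\xi)] - \eta\,\mathbb{E}_t[\hat\xi \hat z_T] = v(\eta,t,T),
\]
proving \eqref{8282}. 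For \eqref{9232}, I would repeat the argument with $\mathbb{E}[\cdot\,1_A]$ in place of $\mathbb{E}_t$; since $1_A \in m\mathcal{F}_t$, the tower property carries each step through, and optimality of $\hat z$ gives $\inf_z \mathbb{E}[1_A V(T,\eta z_T)] = \mathbb{E}[1_A V(T,\eta \hat z_T)]$.

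The main obstacle is rigorously justifying the first-order optimality step: domination of the monotone difference quotient is guaranteed only for $z$ with $\mathbb{E}_t[V(T,\eta z_T)] < \infty$, so general $z \in \mathcal{Z}_t$ must be handled by restricting to a dense subfamily (which does not affect the essinf) or by a suitable truncation. Additional care is needed on $\{\eta = 0\}$, where $\hat\xi$ may be infinite by Inada and products must be interpreted via the convention $0 \cdot \infty = 0$.
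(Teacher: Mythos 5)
Your proposal takes a genuinely different route from the paper. You work directly with the dual optimizer $\hat z$ from Lemma \ref{lemDualExistence}, the pointwise Fenchel relation $\hat\xi = -V'(T,\eta\hat z_T)$, and a first-order optimality (variational) inequality. The paper instead truncates the conjugate to $V^n(T,y) := \sup_{x\in(0,n]}(U(T,x)-xy)$, notes that $n\in\mathcal C_T(n)$ yields $\esssup_{\xi\in\mathcal C_T(n)}\mathbb E_t[U(T,\xi)-\xi\eta z_T]\geq\mathbb E_t[V^n(T,\eta z_T)]$, then takes dual optimizers $\hat z^n$ for the truncated problems, passes to convex combinations, extracts a Fatou limit, and finishes with the uniform integrability of $(V^n)^{-}$ plus Fatou's lemma; a separate change-of-variable trick (Step 2) removes the temporary hypothesis $\mathbb E[U(T,1)]>-\infty$. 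The paper's construction is deliberately designed so that no first-order condition and no control on $\mathbb E_t[\hat\xi z_T]$ is ever needed.

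The gaps you yourself flag in your approach are real and are not easily patched. First, the variational inequality $\eta\,\mathbb E_t[\hat\xi z_T]\le\eta\,\mathbb E_t[\hat\xi\hat z_T]$ needs the difference quotients $\lambda^{-1}\bigl(V(T,\eta(z_\lambda)_T)-V(T,\eta\hat z_T)\bigr)$ to admit a limit under conditional expectation, which requires $\mathbb E_t\bigl[V^{+}(T,\eta z_T)\bigr]<\infty$; for $z$ with $\mathbb E_t[V^{+}(T,\eta z_T)]=\infty$ on a set of positive measure, the monotone/dominated argument breaks down and the inequality is not established there. Restricting to a "dense subfamily" does not obviously save the argument, since you need the $z$-independent lower bound $\mathbb E_t[\hat\xi^x\eta z_T]\le\mathbb E_t[\hat\xi\eta\hat z_T]$ to hold for \emph{each} $z\in\mathcal Z_t$ before you take the essential infimum, not merely for an approximating family. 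Second, the reduction "$\eta\notin\mathcal E^v_t$ implies $v(\eta,t,T)=+\infty$" is not valid in general, since $\mathcal E^v_t$ is defined via a global (unconditional) integrability requirement and $v(\eta,t,T)$ is an $\mathcal F_t$-random quantity that may well be finite on part of $\Omega$; the "separate approximation argument" you invoke is not supplied. Third, on $\{\eta\hat z_T = 0\}$ one has $\hat\xi=-V'(T,0)=\infty$ by Inada, so $\hat\xi^x = x$ there and the term $x\,\eta z_T$ is not controlled by $\mathbb E_t[\hat\xi\eta\hat z_T]=0$ (with the convention $0\cdot\infty=0$) unless one argues separately that $\eta\hat z_T>0$ a.s.\ on $\{\eta>0\}$ (true when $v$ is finite and $U$ is unbounded above, but not addressed). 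Each of these points requires substantive further work; the paper's $V^n$-truncation and the final change-of-num\'eraire step are precisely how these difficulties are avoided, so in its current form the proposal does not constitute a complete proof.
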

\begin{proof}
{\it Step 1.} First, we suppose that 
\begin{equation}\label{1011}
\mathbb E\left[U(T,1)\right]>-\infty.
\end{equation}
Let us  set
$$V^n(T,y) :=\sup\limits_{x\in(0,n]}(U(T,x) - xy),\quad y\geq 0,\quad n\in\mathbb N.$$
\tbl{Next, we fix $z\in\mathcal Z_t$.} Then, for every %$t\in[0,T]$ and 
$n\in\mathbb N$, as $n\in\mathcal C_T(n)$, we get
\begin{equation}\label{8284}
\esssup\limits_{\xi\in\mathcal C_T(n)}
\mathbb E_t\left[\left(U(T,\xi) - \xi\eta z_T\right)\right] \geq 
\mathbb E_t\left[V^n\left(T,\eta z_T\right)\right].
\end{equation}
Next we set $$v^n(\eta, t,T) :=\essinf\limits_{z\in\mathcal Z_t} \mathbb E_t\left[V^n\left(T,\eta z_T\right)\right],\quad \eta \in\mathcal N_t,$$
and observe that $v^n(\eta, t,T)$, $n\in\mathbb N$, in an increasing sequence. 
From \eqref{8284}, we obtain 
\begin{equation}\label{993}
\lim\limits_{n\to\infty}\essinf\limits_{z\in\mathcal Z_t}\esssup\limits_{\xi\in\mathcal C_T(n)}
\mathbb E_t\left[\left(U(T,\xi) - \xi\eta z_T\right)\right] \geq \lim\limits_{n\to\infty}v^n(\eta, t,T).
\end{equation}
As $V^n(T,y)\leq V(T,y)$, $y\geq 0$, similarly to Lemma \ref{lemDualExistence}, we deduce that  there exists $\hat z^n\in\mathcal Z_t$, such that 
$$v^n(\eta, t,T) = \mathbb E_t\left[V^n\left(T,\eta \hat z^n_T\right) \right],\quad n\in\mathbb N.$$
One can pass to convex combinations, which we denote $\tilde z^n$, $n\in\mathbb N$, to obtain a  Fatou-limit of $\tilde z^n$'s, which we denote $\hat z\in\mathcal Z_t$. 
As $$V^n(T,y)\geq V^2(T,y) \geq V(T,y)1_{\{y\geq U'(T,2)\}} + (U(T,2) - 2U'(T,2))1_{\{y<U'(T,2)\}}, \quad y\geq 0,n\geq 2,$$
using convexity of $U(T, \cdot)$, we get $U'(T, 2)\leq U(T,2) - U(T,1)$, and thus 
$$V^n(T,y)\geq \min\left( V(T,y),2U(T,1) - U(T,2)\right),\quad y\geq 0,n\geq 2,$$
and (similarly to \cite[Lemma 3.9]{Mostovyi2015}) uniform integrability of $(V^n)^{-}\left(T,\tilde z^n_T\eta\right)$, $n\geq 2$, follows from \cite[Lemma 3.5]{Mostovyi2015} and \eqref{1011}.  
As a consequence, using convexity of $V^n(T,\cdot)$'s and Fatou's lemma, we get 
\begin{equation}\label{994}
\lim\limits_{n\to\infty}v^n(\eta, t,T) = \lim\limits_{n\to\infty}\mathbb E_t\left[V^n(T,\hat z^n_T \eta) \right]\geq 
\lim\limits_{n\to\infty}\mathbb E_t\left[V^n(T,\tilde z^n_T \eta) \right]\geq
\mathbb E_t\left[V(T,\hat z_T \eta) \right]\geq
v(\eta, t,T).
\end{equation}
 Combining \eqref{993} and \eqref{994}, we obtain \eqref{8282}. In turn,  \eqref{9232} can be proven similarly. 
 
 {\it Step 2.} \tbl{Here we do not suppose that \eqref{1011} holds.} This case can be reduced to the one above by taking $\hat \rho = \argmax\limits_{\xi\in\mathcal C_{tT}}\mathbb E_t\left[ U\left(T,\tfrac{1}{2}\xi\rho\right)\right]$ and by setting
 $\rho = \max\left( \hat \rho, \tfrac{1}{2}\right).$ Then $0<\rho\in\mathcal C_{tT}$ and for 
 \begin{displaymath}
 \begin{array}{c}
 \tilde U(T,x) :=U(T,\rho x),\quad \tilde V(T,y) :=V\left(T,\frac{y}{\rho}\right),\\
 \tilde C_{tT} :=\left\{\tilde \rho:~\tilde \rho \rho\in\mathcal C_{tT} \right\},\quad \tilde {\mathcal Z}_t:=\left\{\tilde z:~\frac{z}{\rho}\in \mathcal Z_t\right\}.\\
 \end{array}
 \end{displaymath} 
 Then we can represent $u$ and $v$ as
 \begin{displaymath}
 u(\xi, t, T) = \esssup\limits_{\tilde \rho \in\mathcal C_{tT}}\mathbb E_t\left[\tilde U(T,\tilde \rho) \right],\quad v(\eta, t,T) = \essinf\limits_{\tilde z\in\mathcal Z_t}\mathbb E_t\left[\tilde V(T,\eta \tilde z)\right]
 \end{displaymath}
 and $\tilde U$ satisfies \eqref{1011}. Then, \eqref{8282} and  \eqref{9232} follow from Step 1. 
%%%%%%%%%%%%
%As $$\mathbb E\left[V^1(\eta z_T)\right] = \mathbb E\left[ \sup\limits_{x\in(0,1]}\left(U(x) - x\eta z_T\right)\right]\geq U(1) - \mathbb E[\eta z_T]>-\infty,$$
%and $V^n(y)$, $n\geq 1$, in an increasing sequence for every $y\geq 0$,
%we deduce (by \cite[Theorem 2(b), p. 216]{ShiriaevProb}) that 
%\begin{equation}\label{8285}
%\lim\limits_{n\to\infty}\mathbb E\left[V^n\left(\eta z_T\right)|\mathcal F_t\right] = \mathbb E\left[V\left(\eta z_T\right)|\mathcal F_t\right].
%\end{equation} 
%Combining \eqref{8284} and \eqref{8285}, we get
%$$\esssup\limits_{\xi\in\bigcup\limits_{x>0}\mathcal C_T(x)}
%\mathbb E\left[\left(U(\xi) - \xi\eta z_T\right)|\mathcal F_t\right]\geq\mathbb E\left[V\left(\eta z_T\right)|\mathcal F_t\right].$$
%As $z\in\mathcal Z_t$ is arbitrary, the latter inequality implies \eqref{8282}.
%%%%%%%%%%%%%
%
\end{proof}
%\section{Getting conjugacy right}
Let us define 
\begin{equation}\label{defC'}\mathcal C'_{tT} :=\left\{\rho\in\mathcal C_{tT}:~\esssup\limits_{z\in\mathcal Z_t}\mathbb E_t[\rho z_T] = 1\right\}
\end{equation}
and adapt to our settings the notation from \cite{Delbaen-Schachermayer1997}.
\begin{equation}\label{defKmax}
\mathcal K^{\max}_{tT} :=\left\{maximal~elements~of~\mathcal X_{tT}(1)\right\}.
\end{equation}
The following lemma extends some characterization of maximal 1-admissible contingent claims from \cite{Delbaen-Schachermayer1997} to the present settings, mainly to Assumption \ref{NUPBR}.
\begin{Lemma}\label{985}
Let Assumption \ref{NUPBR} holds. Then we have
\begin{equation}\label{9811}\mathcal K^{\max}_{tT}\subseteq\mathcal C'_{tT}\subset \mathcal C_{tT}.
\end{equation} 
As a consequence, for every $x>0$ and $\xi\in\mathcal C_t(x)$, we have 
\begin{equation}\label{92213} u(\xi, t,T) = \esssup\limits_{\phi\in\mathcal C'_{tT}}\mathbb E_t\left[U(T,\xi\phi)\right]= \esssup\limits_{\phi\in\mathcal C_{tT}}\mathbb E_t\left[U(T,\xi\phi)\right],\end{equation}
and for every $A\in\mathcal F_t$, we have
\begin{equation}\label{92212}
\sup\limits_{\phi\in\mathcal C'_{tT}}\mathbb E\left[U(T,\xi\phi)1_A\right]= \sup\limits_{\phi\in\mathcal C_{tT}}\mathbb E\left[U(T,\xi\phi)1_A\right].
\end{equation}
\end{Lemma}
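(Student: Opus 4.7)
The inclusion $\mathcal{C}'_{tT} \subset \mathcal{C}_{tT}$ is immediate from definition \eqref{defC'}; I would establish $\mathcal{K}^{\max}_{tT} \subseteq \mathcal{C}'_{tT}$ first, and then derive \eqref{92213} and \eqref{92212} by a domination argument based on it.

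For the first inclusion, fix $X \in \mathcal{K}^{\max}_{tT}$, so $X_t = 1$ and the terminal value $X_T$ is maximal in $\{Y_T : Y \in \mathcal{X}_{tT}(1)\}$. The Delbaen--Schachermayer characterization of maximal admissible strategies carries over to the NUPBR setting via the optional decomposition theorem \cite[Theorem 1.4]{KarKar15}, run analogously to the argument already used in the Remark following Lemma \ref{231}: there exists $\hat{z} \in \mathcal{Z}_t$ with $\hat{z}_t = 1$ such that $(\hat{z}_s X_s)_{s \in [t,T]}$ is a true (uniformly integrable) martingale. Consequently $\mathbb{E}_t[X_T \hat{z}_T] = \hat{z}_t X_t = 1$, while for an arbitrary $z \in \mathcal{Z}_t$ the defining supermartingale property together with $z_t \leq 1$ gives $\mathbb{E}_t[X_T z_T] \leq z_t X_t \leq 1$. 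Combining these two bounds yields $\esssup_{z \in \mathcal{Z}_t}\mathbb{E}_t[X_T z_T] = 1$, i.e., $X_T \in \mathcal{C}'_{tT}$.

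For \eqref{92213}, only the inequality with $\mathcal{C}_{tT}$-supremum on the left is nontrivial, since $\mathcal{C}'_{tT} \subset \mathcal{C}_{tT}$. Given $\phi \in \mathcal{C}_{tT}$, pick $X \in \mathcal{X}_{tT}(1)$ with $\phi \leq X_T$. Using the closedness of $\mathcal{C}_{tT}$ in probability (as invoked in the proof of Lemma \ref{231} via the technique from \cite[Proof of Proposition 3.1]{KS}), a Zorn's-lemma argument on the partially ordered set $\{Y_T : Y \in \mathcal{X}_{tT}(1),\ Y_T \geq X_T\}$---with upper bounds for chains supplied by passing to Fatou/probability limits---produces a maximal $\tilde{X} \in \mathcal{K}^{\max}_{tT}$ with $\tilde{X}_T \geq \phi$. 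By Step 1, $\tilde{X}_T \in \mathcal{C}'_{tT}$, and monotonicity of $U(T,\cdot)$ gives $\mathbb{E}_t[U(T,\xi\phi)] \leq \mathbb{E}_t[U(T,\xi\tilde{X}_T)]$. Taking essential suprema on both sides yields the desired inequality. Equation \eqref{92212} is obtained by the same reasoning applied after multiplication by $1_A$ and unconditioning.

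The main obstacle is the extension of both the maximality characterization (Step 1) and the domination-by-maximal property (Step 2) from the NFLVR framework of \cite{Delbaen-Schachermayer1997} to the weaker NUPBR hypothesis of Assumption \ref{NUPBR}. Under NFLVR one has equivalent local martingale measures; under NUPBR one has only supermartingale deflators, and the optional decomposition theorem \cite[Theorem 1.4]{KarKar15}---whose applicability in this context has already been verified in the Remark following Lemma \ref{231}---is the key tool providing the required martingale representation. Once these structural facts are in hand, the remainder reduces to monotonicity of $U(T,\cdot)$.
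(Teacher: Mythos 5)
The overall architecture of your argument is sound and agrees with the paper: establish $\mathcal{K}^{\max}_{tT}\subseteq\mathcal{C}'_{tT}$, show every $\phi\in\mathcal{C}_{tT}$ is dominated by a maximal element, and conclude by monotonicity of $U(T,\cdot)$. The problem is the justification of the key inclusion $\mathcal{K}^{\max}_{tT}\subseteq\mathcal{C}'_{tT}$.

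You invoke the optional decomposition theorem of Kardaras--Karatzas to claim that for a maximal $X$ there is a deflator $\hat z\in\mathcal{Z}_t$ with $\hat z_t=1$ making $\hat z X$ a true uniformly integrable martingale on $[t,T]$. This is not what the optional decomposition theorem delivers. That theorem says: a process which is a supermartingale under every deflator admits a decomposition $V_0 + H\cdot R^0 - A$ with $A$ nondecreasing --- it provides a dominating wealth process, not the \emph{martingale characterization of maximal elements}. The fact you actually need (maximal admissible claims are exactly those whose deflated wealth is a true martingale under some density) is \cite[Theorem~2.5]{Delbaen-Schachermayer1997}, which is proved under NFLVR. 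The remark after Lemma~\ref{231} that you cite uses optional decomposition for an entirely different purpose (constructing a primal optimizer), so "running it analogously" does not produce the statement you want.

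The paper closes this gap by a change of num\'eraire. Under Assumption~\ref{NUPBR}, the num\'eraire portfolio $N$ exists (\cite{KarKar07}); after dividing by $N$, the market $(1/N,R^0/N)$ satisfies NFLVR, so \cite[Theorem~2.5]{Delbaen-Schachermayer1997} applies directly and the density of the equivalent martingale measure factors as $z'zN$ with $z\in\mathcal{Z}_t$. This yields $\mathbb{E}\bigl[z'_t\,\mathbb{E}_t[z_T X^{tT}_T]\bigr]=1$, and since $\mathbb{E}_t[z_T X^{tT}_T]\le 1$ pointwise, one concludes $\mathbb{E}_t[z_T X^{tT}_T]=1$ a.s., hence $X^{tT}_T\in\mathcal{C}'_{tT}$. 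Your Zorn's-lemma argument for the domination step is fine in spirit (the paper simply cites \cite[Proposition~3.1]{KS}), but without the num\'eraire change the first inclusion --- on which everything else rests --- is not established.
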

\begin{proof}
The proof if based on a change of num\'eraire idea. By the results of \cite{KarKar07}, Assumption \ref{NUPBR}, implies the existence of the num\'eraire portfolio $N$. Let us assume that for some trading strategy $G$, $X^{tT}:=1 + \int_{t+}^T G_sdR^0_s$ is a (maximal) 
element of $\mathcal K^{\max}_{tT}$, then, as we can extend $G$ by $0$ on $[0,t]$ to obtain an element of $K^{\max}$, one can see that $\frac{X^{tT}}{N}$ is a maximal element under the num\'eraire $N$, and NFLVR holds for $\left(\frac{1}{N},\frac{R^0}{N}\right)$. As  densities of locally equivalent martingale measures under the new num\'eraire can be represented as $z'zN$, where $z'$ is a supermartingale deflator for $S$ on $[0,t]$ and $z$ is an element of $\mathcal Z_t$, we deduce from \cite[Theorem 2.5]{Delbaen-Schachermayer1997}, the existence of $z'z$, such that 
$$ 1= \mathbb E\left[z'_tz_TN_T \frac{X^{tT}_T}{N_T} \right]=\mathbb E\left[z'_tz_T{X^{tT}_T} \right] = \mathbb E\left[z'_t \mathbb E_t\left[z_T{X^{tT}_T}\right] \right].$$
As $\mathbb E_t[z_T X^{tT}_T]\leq 1$, by construction, it follows that 
 $\mathbb E_t\left[z_T{X^{tT}_T}\right] =1$, \Pas, and therefore we have
\begin{equation}\label{9810}
K^{\max}_{tT}\subseteq \mathcal C'_{tT}.
\end{equation} Similarly to \cite[Proposition 3.1]{KS}, we deduce that
$$\mathcal C_{tT} = \left\{c\in m\mathcal F_T:~c\leq h,~for~some~h\in \mathcal K^{\max}_{tT}\right\} = \left\{c\in m\mathcal F_T:~\esssup\limits_{z\in\mathcal Z_t}\mathbb E_t[cz_T]\leq 1 \right\},$$
and thus \eqref{9811} holds. Moreover, as $U(T,\cdot)$ is nondecreasing, we get
$$
\esssup\limits_{\phi\in\mathcal K^{\max}_{tT}}\mathbb E_t\left[U(T,\xi\phi)\right]= \esssup\limits_{\phi\in\mathcal C_{tT}}\mathbb E_t\left[U(T,\xi\phi)\right].$$
Combining the latter equality with \eqref{9811}, we obtain \eqref{92213}. Finally, \eqref{92212} can be obtained similarly to \eqref{92213}.
\end{proof}

\begin{Lemma}\label{lem9221}
Let the condition of Lemma \ref{231} hold and $\eta \in\mathcal N_t$ be fixed. Then for every $A\in\mathcal F_t$, we have
\begin{displaymath}
\begin{split}
\inf\limits_{z\in\mathcal Z_t}\mathbb E\left[V(T,\eta z_T)1_{A}\right]
&=
\lim\limits_{x\to\infty}\inf\limits_{z\in\mathcal Z_t}\sup\limits_{\xi\rho\in\mathcal C_T(x)}
\mathbb E\left[\left(U(T,\xi\rho) - \eta\xi\rho z_T\right)1_A\right] \\
&=
\sup\limits_{\xi\rho\in\bigcup\limits_{x>0}\mathcal C_T(x)}
\inf\limits_{z\in\mathcal Z_t}
\mathbb E\left[\left(U(T,\xi\rho) - \eta\xi\rho z_T\right)1_A\right].\\
\end{split}
\end{displaymath}
\end{Lemma}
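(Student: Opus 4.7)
I would prove the statement as two separate equalities. The first equality can be extracted from the pointwise Fenchel inequality combined with Lemma \ref{996}, while the second equality would follow from a minimax swap applied at each fixed initial wealth $x$ together with the monotonicity of the primal domain $\mathcal{C}_T(x)$ in $x$.

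For the first equality, the definition of $V$ yields the pointwise bound $U(T,\xi\rho) - \eta\xi\rho z_T \leq V(T,\eta z_T)$ for every nonnegative $\xi\rho$ and $z$. Integrating against $1_A$ and taking $\sup_{\xi\rho \in \mathcal{C}_T(x)}$ followed by $\inf_{z \in \mathcal{Z}_t}$ yields
$$\inf_{z \in \mathcal{Z}_t} \sup_{\xi\rho \in \mathcal{C}_T(x)} \mathbb{E}\left[\left(U(T,\xi\rho) - \eta\xi\rho z_T\right)1_A\right] \leq \inf_{z \in \mathcal{Z}_t} \mathbb{E}\left[V(T,\eta z_T) 1_A\right],$$
which persists in the monotone limit $x \to \infty$. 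The opposite inequality is exactly \eqref{9232} of Lemma \ref{996}.

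For the second equality, the standard weak duality $\sup\inf \leq \inf\sup$, combined with the fact that $\mathcal{C}_T(x)$ is increasing in $x$, immediately gives
$$\sup_{\xi\rho \in \bigcup_{x>0} \mathcal{C}_T(x)} \inf_{z \in \mathcal{Z}_t} \mathbb{E}\left[\left(U(T,\xi\rho) - \eta\xi\rho z_T\right)1_A\right] \leq \lim_{x\to\infty} \inf_{z \in \mathcal{Z}_t} \sup_{\xi\rho \in \mathcal{C}_T(x)} \mathbb{E}\left[\left(U(T,\xi\rho) - \eta\xi\rho z_T\right)1_A\right].$$
For the reverse inequality I would fix $x > 0$ and apply a minimax theorem to swap the inner $\inf_z$ and $\sup_{\xi\rho}$. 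After reducing via Lemma \ref{985} to supping over $\mathcal{C}'_{tT}$ (or $\mathcal{K}^{\max}_{tT}$) rescaled by an $\mathcal{F}_t$-measurable wealth, both domains are convex, closed in probability, and bounded in probability: the primal side by Assumption \ref{NUPBR}, the dual side by the bipolarity induced by NUPBR. The functional is concave in $\xi\rho$ and affine in $z$, so a minimax result valid on domains that are merely bounded in probability (in the spirit of \cite{bankKauppila}) delivers
$$\inf_{z \in \mathcal{Z}_t} \sup_{\xi\rho \in \mathcal{C}_T(x)} \mathbb{E}\left[\left(U(T,\xi\rho) - \eta\xi\rho z_T\right)1_A\right] = \sup_{\xi\rho \in \mathcal{C}_T(x)} \inf_{z \in \mathcal{Z}_t} \mathbb{E}\left[\left(U(T,\xi\rho) - \eta\xi\rho z_T\right)1_A\right],$$
and taking $\sup_{x>0}$ on both sides completes the identification.

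The main obstacle I anticipate lies in verifying the semicontinuity hypotheses of the minimax theorem: the functional must be upper semicontinuous in $\xi\rho$ and lower semicontinuous in $z$ with respect to convergence in probability (after the customary passage to convex combinations). This is precisely where Assumption \ref{finiteness} and \cite[Lemma 3.5]{Mostovyi2015} would enter, ensuring that $U^{+}(T,\xi\rho)$ and $V^{-}(T,\eta z_T)$ stay uniformly integrable over the relevant bounded-in-probability families, so Fatou's lemma produces the required one-sided limiting behavior along maximizing and minimizing sequences, exactly as in the proofs of Lemmas \ref{231} and \ref{lemDualExistence}.
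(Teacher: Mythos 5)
Your proposal matches the paper's own argument: the first equality is obtained from the pointwise Fenchel inequality (definition of $V$) together with \eqref{9232} from Lemma~\ref{996}, and the second from the minimax theorem of \cite[Theorem B.3]{bankKauppila}. The paper states this in one line without elaborating on the boundedness-in-probability and uniform-integrability hypotheses that you (rightly) flag as the content of applying that minimax theorem; your reduction via Lemma~\ref{985} and $\mathcal C'_{tT}$ is not strictly needed at this step and is in fact deferred in the paper to Lemma~\ref{lem9222}.
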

\begin{proof}
The first equality follows from Lemma \ref{996} (see \eqref{9232}) and the definition of $V$, whereas the second one is a consequence if the minimax theorem, see  \cite[Theorem B.3]{bankKauppila}. 
\end{proof}
\begin{Remark}
The proof of Lemma \ref{lem9221} follows the structure of \cite[Lemma 3.9]{Mostovyi2015}. However, in view of \cite[Theorem B.3]{bankKauppila}, one does not need to truncate the domain of $u$ and to invoke the Banach-Alaoglu's theorem.
\end{Remark}
%The key technical step for the proof of Lemma \ref{lem9221} is established in the subsequent lemma. 
\begin{Remark}[On the multiplicative decomposition of $\mathcal C_{tT}$]\label{rem9231}
%%%%%%%%%%%%%%%%%%%%%%%%
 Let us consider 
$$\alpha (\rho) :=\esssup\limits_{z\in\mathcal Z_t}\mathbb E_t\left[ \rho z_T\right],\quad \rho\in\mathcal C_{tT}.$$
Then, for every $\rho\in\mathcal C_{tT}$, $\alpha(\rho)$ takes values in $[0,1]$, and it follows from Assumption \ref{NUPBR} that  
\begin{equation}\label{983}
\mathbb P\left[\left\{\rho>0\right\} \cap \left\{\alpha(\rho) = 0 \right\}\right] = 0.
\end{equation}
 and we recall that $\mathcal C'_{tT}$ and $\mathcal B^+_t$ are defined in \eqref{defC'} and \eqref{defBt+}, respectively. Then using \eqref{983}, we get
$$\rho = 1_{\{\alpha(\rho) > 0\}}\alpha(\rho)\frac{\rho}{\alpha(\rho)} + 1_{\{\alpha(\rho) = 0\}}\rho = 1_{\{\alpha(\rho)> 0\}}\alpha(\rho)\frac{\rho}{\alpha(\rho)},$$
i.e., a multiplicative decomposition of $\rho$ into an element of $\mathcal B^{+}_t$ and an element of $\mathcal C'_{tT}$, which holds on  $\{\alpha(\rho) > 0\}$, and which we can extend to $\{\alpha(\rho) = 0\}$ by $\alpha(\rho)$ multiplied by any element of $\mathcal C'_{tT}$ (restricted to $\{\alpha(\rho) = 0\}$).
%%%%%%%%%%%%%%%%%%
\end{Remark}
\begin{Lemma}\label{lem9222}
Let the condition of Lemma \ref{231} hold. Then we have
\begin{equation}\label{9226}
\sup\limits_{\xi\in\mathcal C_t(x)}\sup\limits_{\rho\in\mathcal C_{tT}}\mathbb E\left[\left(U(T,\xi\rho) - \xi\eta\right)1_{A} \right]=
\sup\limits_{\xi\in\mathcal C_t(x)}\sup\limits_{\rho\in\mathcal C_{tT}}\inf\limits_{z\in\mathcal Z_t}\mathbb E\left[\left(U(T,\xi\rho) - \xi\eta \rho z_T\right)1_{A} \right].
\end{equation}
\end{Lemma}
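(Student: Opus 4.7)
The plan is to prove the equality \eqref{9226} by establishing each inequality separately. The direction LHS $\leq$ RHS is immediate from the supermartingale-deflator bound $\mathbb E_t[\rho z_T] \leq 1$, valid for all $\rho \in \mathcal C_{tT}$ and $z \in \mathcal Z_t$ (via the characterization in Lemma \ref{985}). Since $\xi\eta 1_A$ is nonnegative and $\mathcal F_t$-measurable, conditioning gives
\begin{equation*}
\mathbb E[\xi\eta\rho z_T 1_A] \;=\; \mathbb E\left[\xi\eta 1_A\, \mathbb E_t[\rho z_T]\right] \;\leq\; \mathbb E[\xi\eta 1_A],
\end{equation*}
so $\mathbb E[(U(T,\xi\rho) - \xi\eta\rho z_T)1_A] \geq \mathbb E[(U(T,\xi\rho) - \xi\eta)1_A]$; passing to $\inf_{z}$ and then $\sup_{\xi,\rho}$ yields the inequality.

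For the direction RHS $\leq$ LHS, fix $\xi \in \mathcal C_t(x)$, $\rho \in \mathcal C_{tT}$, and recall $\alpha(\rho) := \esssup_{z\in\mathcal Z_t}\mathbb E_t[\rho z_T] \in [0,1]$ from Remark \ref{rem9231}. I would first verify that the family $\{\mathbb E_t[\rho z_T]:z\in\mathcal Z_t\}$ is upward directed: given $z^1,z^2\in\mathcal Z_t$ and the $\mathcal F_t$-measurable set $B := \{\mathbb E_t[\rho z^1_T]\geq\mathbb E_t[\rho z^2_T]\}$, the concatenation $z := z^1 1_B + z^2 1_{B^c}$ satisfies $z_t\leq 1$ and, since $B\in\mathcal F_t$, the supermartingale property $(z_s X_s)_{s\in[t,T]}$ for every $X\in\mathcal X(1)$ is preserved under conditioning, so $z \in \mathcal Z_t$. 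Hence there is a maximizing sequence $z^n\in\mathcal Z_t$ with $\mathbb E_t[\rho z^n_T]\nearrow\alpha(\rho)$ a.s., and the monotone convergence theorem gives $\sup_{z}\mathbb E[\xi\eta\rho z_T 1_A]=\mathbb E[\xi\eta\alpha(\rho)1_A]$, from which
\begin{equation*}
\inf_{z\in\mathcal Z_t}\mathbb E\bigl[(U(T,\xi\rho)-\xi\eta\rho z_T)1_A\bigr] \;=\; \mathbb E\bigl[(U(T,\xi\rho) - \xi\eta\alpha(\rho))1_A\bigr].
\end{equation*}

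The final step invokes the multiplicative decomposition of Remark \ref{rem9231}: write $\rho = \alpha(\rho)\rho'$ with $\rho' \in \mathcal C_{tT}$, setting $\rho' = \rho/\alpha(\rho)$ on $\{\alpha(\rho)>0\}$ and extending arbitrarily in $\mathcal C'_{tT}$ on $\{\alpha(\rho)=0\}$; the $\mathbb P$-a.s.\ equality on the null set is guaranteed by \eqref{983}. Setting $\xi' := \xi\alpha(\rho)$, the $\mathcal F_t$-measurability of $\alpha(\rho)\in[0,1]$ and the solidity of $\mathcal C_t(x)$ force $\xi' \in \mathcal C_t(x)$; since $\xi'\rho' = \xi\rho$ and $\xi'\eta = \xi\eta\alpha(\rho)$, the previous display rewrites as $\mathbb E[(U(T,\xi'\rho') - \xi'\eta)1_A]$, which is dominated by the LHS of \eqref{9226} because $(\xi',\rho')$ is itself a feasible pair in $\mathcal C_t(x)\times\mathcal C_{tT}$. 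Taking $\sup_{\xi,\rho}$ completes the proof. The main technical obstacle is precisely this re-parametrization: after trading the inner $\inf_{z}$ via upward-directedness for the $z$-free quantity $\mathbb E[\xi\eta\alpha(\rho)1_A]$, absorbing the random factor $\alpha(\rho)$ into $\xi$ relies simultaneously on the multiplicative decomposition of Remark \ref{rem9231}, the $\mathcal F_t$-measurability of $\alpha(\rho)$, and the solidity of $\mathcal C_t(x)$.
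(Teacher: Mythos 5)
Your proof is correct and rests on the same pillars as the paper's: the upward directedness of $\{\mathbb E_t[\rho z_T]: z\in\mathcal Z_t\}$, the multiplicative decomposition $\rho = \alpha(\rho)\rho'$ of Remark~\ref{rem9231}, and the solidity of $\mathcal C_t(x)$ (the identity $\mathcal C_t(x)=\mathcal B^+_t\mathcal C_t(x)$), so it is essentially the same approach, merely reorganized as a two-sided inequality with a pointwise re-parametrization $(\xi,\rho)\mapsto(\xi\alpha(\rho),\rho')$ rather than the paper's global decomposition of the supremum into a triple supremum over $(\xi,\alpha,\phi)$. One minor phrasing slip: $\{\alpha(\rho)=0\}$ is not itself a null set; \eqref{983} only asserts that $\{\rho>0\}\cap\{\alpha(\rho)=0\}$ is null, which is what actually justifies $\xi\rho=\xi'\rho'$ $\mathbb P$-a.s.
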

\begin{proof}
With $\mathcal B^{+}_t$ and $\mathcal C'_{tT}$ defined in \eqref{defC'} and \eqref{defBt+}, respectively, and following the argument of Remark \ref{rem9231}, we can rewrite the right-hand side of \eqref{9226} as
\begin{equation}\label{92211}
\begin{split}
&\sup\limits_{\xi\in\mathcal C_t(x)}\sup\limits_{\rho\in\mathcal C_{tT}}\inf\limits_{z\in\mathcal Z_t}\mathbb E\left[\left(U(T,\xi\rho) - \xi\eta \rho z_T\right)1_{A} \right] \\
=&\sup\limits_{\xi\in\mathcal C_t(x)}\sup\limits_{\rho\in\mathcal C_{tT}}\left(\mathbb E\left[\left(U(T,\xi\rho\right)1_A \right]- \sup\limits_{z\in\mathcal Z_t}\mathbb E\left[\xi\eta \rho z_T1_{A} \right]\right)\\
=&\sup\limits_{\xi\in\mathcal C_t(x)}\sup\limits_{\alpha\in\mathcal B^{+}_t}\sup\limits_{\phi\in\mathcal C'_{tT}}\left(\mathbb E\left[\left(U(T,\xi\alpha\phi\right)1_A \right]- \sup\limits_{z\in\mathcal Z_t}\mathbb E\left[\xi\eta \alpha\phi z_T1_{A} \right]\right).\\ 
\end{split}
\end{equation}
Let us consider the latter term, $\sup\limits_{z\in\mathcal Z_t}\mathbb E\left[\xi\eta \alpha\phi z_T1_{A} \right]$, which we can rewrite as
\begin{equation}\label{9227}
\sup\limits_{z\in\mathcal Z_t}\mathbb E\left[\xi\eta \alpha\phi z_T1_{A} \right]
=\sup\limits_{z\in\mathcal Z_t}\mathbb E\left[\xi\eta \alpha1_{A}\mathbb E_t\left[\phi z_T\right] \right],
\end{equation}
where by the respective definitions of $\mathcal C_t(x)$, $\mathcal N_t$, and $\mathcal B^{+}_t$, we deduce that 
\begin{equation}\label{9229}
0\leq\mathbb E\left[\xi\eta \alpha1_{A}\right]<\infty,
\end{equation}
and from the definition of $\mathcal C'_{tT}$, for every $\phi\in\mathcal C'_{tT}$, we have 
$$\esssup\limits_{z\in\mathcal Z_t}\mathbb E_t\left[\phi z_T\right] = 1,\quad \Pas$$
Therefore, for every $z\in\mathcal Z_t$, we obtain
\begin{equation}\nonumber%\label{9228}
\begin{array}{c}
\mathbb E\left[\xi\eta \alpha1_{A}\mathbb E_t\left[\phi z_T\right] \right] \leq \mathbb E\left[\xi\eta \alpha1_{A}\esssup\limits_{z\in\mathcal Z_t}\mathbb E_t\left[\phi z_T\right] \right]  
%= \sup\limits_{z\in\mathcal Z_t}\mathbb E\left[\xi\eta \alpha1_{A}\right]
= \mathbb E\left[\xi\eta \alpha1_{A}\right].\\
\end{array}
\end{equation}
Consequently, for every $\xi\in\mathcal C_t(x)$, $\eta\in\mathcal N_t$, $\alpha\in\mathcal B^{+}_t$, $\phi\in\mathcal C'_{tT}$, we get
\begin{equation}\label{9228}
\sup\limits_{z\in\mathcal Z_t}\mathbb E\left[\xi\eta \alpha1_{A}\mathbb E_t\left[\phi z_T\right] \right] \leq \mathbb E\left[\xi\eta \alpha1_{A}\right].
\end{equation}
On the other hand, let us fix $\phi\in\mathcal C'_{tT}$ and two arbitrary elements of $\mathcal Z_t$, $\bar z^1$ and $\bar z^2$. With
$$B:=\left\{ \mathbb E_t\left[ \phi\bar z^1_T\right]> \mathbb E_t\left[ \phi\bar z^2_T\right]\right\},$$
one can see that $$\bar z:=1_B\bar z^1 + 1_{B^c}\bar z^2\in\mathcal Z_t,$$ is such that 
$$\mathbb E_t\left[ \phi\bar z_T\right] = \max\left(\mathbb E_t\left[ \phi\bar z^1_T\right], \mathbb E_t\left[ \phi\bar z^2_T\right]\right),$$
and therefore by \cite[Theorem A.2.3, p. 215]{Pham09}, we deduce that there exists a sequence $(z^n)_{n\in\mathbb N}\subset \mathcal Z_t$, such that
\begin{equation}\label{92210}
\lim\limits_{n\to\infty}\mathbb E_t\left[ \phi z^n_T\right] = \esssup\limits_{z\in\mathcal Z_t}\mathbb E_t\left[ \phi z_T\right] = 1,\quad \Pas,
\end{equation}
where the last equality follows from the definition of $\mathcal C'_{tT}$.
 Therefore, the left-hand side in \eqref{9227}, can be bounded from below as follows.
\begin{equation}\label{92281}
\sup\limits_{z\in\mathcal Z_t}\mathbb E\left[\xi\eta \alpha1_{A}\mathbb E_t\left[\phi z_T\right] \right]\geq 
\lim\limits_{n\to\infty}
\mathbb E\left[\xi\eta \alpha1_{A}\mathbb E_t\left[\phi z^n_T\right] \right]
\end{equation}
As $\mathbb E_t\left[\phi z^n_T\right] \leq 1$, $n\in\mathbb N$, \Pas,  and in view of \eqref{9229}, an application of the dominated convergence theorem in the right-hand side of \eqref{92281} gives 
\begin{displaymath}
\begin{split}
\lim\limits_{n\to\infty}
\mathbb E\left[\xi\eta \alpha1_{A}\mathbb E_t\left[\phi z^n_T\right] \right] = 
\mathbb E&\left[\xi\eta \alpha1_{A}\lim\limits_{n\to\infty}\mathbb E_t\left[\phi z^n_T\right] \right]\\ 
 &=\mathbb E\left[\xi\eta \alpha1_{A}\esssup\limits_{z\in\mathcal Z_t}\mathbb E_t\left[\phi z^n_T\right] \right] = \mathbb E\left[\xi\eta \alpha1_{A}\right],
\end{split}
\end{displaymath}
where we used \eqref{92210} in the last equality. Combining these equalities with \eqref{92281}, we get
$$\sup\limits_{z\in\mathcal Z_t}\mathbb E\left[\xi\eta \alpha1_{A}\mathbb E_t\left[\phi z_T\right] \right]\geq \mathbb E\left[\xi\eta \alpha1_{A}\right],$$
which together with \eqref{9228} imply that 
$$\sup\limits_{z\in\mathcal Z_t}\mathbb E\left[\xi\eta \alpha1_{A}\mathbb E_t\left[\phi z_T\right] \right]= \mathbb E\left[\xi\eta \alpha1_{A}\right].$$
Plugging this equality into \eqref{92211}, we obtain 
\begin{equation}\label{92215}
\begin{split}
%\sup\limits_{\xi\in\mathcal C_t(x)}\sup\limits_{\rho\in\mathcal C_{tT}}\inf\limits_{z\in\mathcal Z_t}\mathbb E\left[\left(U(\xi\rho) - \xi\eta \rho z_T\right)1_{A} \right] =\\
%\sup\limits_{\xi\in\mathcal C_t(x)}\sup\limits_{\rho\in\mathcal C_{tT}}\left(\mathbb E\left[\left(U(\xi\rho\right)1_A \right]- \sup\limits_{z\in\mathcal Z_t}\mathbb E\left[\xi\eta \rho z_T1_{A} \right]\right)=\\
&\sup\limits_{\xi\in\mathcal C_t(x)}\sup\limits_{\alpha\in\mathcal B^{+}_t}\sup\limits_{\phi\in\mathcal C'_{tT}}\left(\mathbb E\left[\left(U(T, \xi\alpha\phi\right)1_A \right]- \sup\limits_{z\in\mathcal Z_t}\mathbb E\left[\xi\eta \alpha\phi z_T1_{A} \right]\right) \\
=&
\sup\limits_{\xi\in\mathcal C_t(x)}\sup\limits_{\alpha\in\mathcal B^{+}_t}\sup\limits_{\phi\in\mathcal C'_{tT}}\left(\mathbb E\left[\left(U(T, \xi\alpha\phi\right)1_A \right]- \mathbb E\left[\xi\eta \alpha1_{A} \right]\right).\\ 
%=&
%\textcolor{black}{\sup\limits_{\xi\in\mathcal C_t(x)}\sup\limits_{\alpha\in\mathcal B^{+}_t}\left(\sup\limits_{\phi\in\mathcal C'_{tT}}\mathbb E\left[\left(U(\xi\alpha\phi\right)1_A \right]- \mathbb E\left[\xi\eta \alpha1_{A} \right]\right)
%.}\\ 
\end{split}
\end{equation}
 \tbl{Note that $ \mathcal C_t(x) = \mathcal B^{+}_t \mathcal C_t(x)$, that is, for every $\alpha\in\mathcal B^{+}_t $ and $\xi \in\mathcal C_t(x)$, we have $\alpha \xi\in\mathcal C_t(x)$}. It follows from Lemma \ref{985} (see \eqref{92212}), that in \eqref{92215} the latter equality can be rewritten as
\begin{displaymath}\begin{split}
\sup\limits_{\xi\in\mathcal C_t(x)}\sup\limits_{\alpha\in\mathcal B^{+}_t}&\left(\sup\limits_{\phi\in\mathcal C'_{tT}}\mathbb E\left[\left(U(T,\xi\alpha\phi\right)1_A \right]- \mathbb E\left[\xi\eta \alpha1_{A} \right]\right) 
\\
&= \sup\limits_{\xi\in\mathcal C_t(x)}\left(\sup\limits_{\phi\in\mathcal C_{tT}}\mathbb E\left[\left(U(T,\xi\phi\right)1_A \right]- \mathbb E\left[\xi\eta 1_{A} \right]\right).
\end{split}\end{displaymath}
Finally, combining the latter equality with (chains of equalities) \eqref{92211} and \eqref{92215}, we conclude that 
\textcolor{black}{$$\sup\limits_{\xi\in\mathcal C_t(x)}\sup\limits_{\rho\in\mathcal C_{tT}}\inf\limits_{z\in\mathcal Z_t}\mathbb E\left[\left(U(T,\xi\rho) - \xi\eta \rho z_T\right)1_{A} \right] = \sup\limits_{\xi\in\mathcal C_t(x)}\left(\sup\limits_{\phi\in\mathcal C_{tT}}\mathbb E\left[\left(U(T,\xi\phi\right)1_A \right]- \mathbb E\left[\xi\eta 1_{A} \right]\right),$$}
i.e., \eqref{9226} hold. This completes the proof of the lemma.
\end{proof}
\begin{Corollary}\label{cor9221}Let the condition of Lemma \ref{231} hold and $\eta \in\mathcal N_t$ be fixed. Then for every $A\in\mathcal F_t$,  we have
\begin{displaymath}
\inf\limits_{z\in\mathcal Z_t}\mathbb E\left[V(T,\eta z_T)1_{A}\right] = 
\sup\limits_{\xi\in\bigcup\limits_{x>0}\mathcal C_t(x)}\sup\limits_{\rho\in\mathcal C_{tT}}\mathbb E\left[\left(U(T,\xi\rho) - \xi\eta\right)1_{A} \right].
\end{displaymath}
\end{Corollary}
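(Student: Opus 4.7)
The plan is to obtain the corollary as a direct combination of Lemma \ref{lem9221} (which converts the dual infimum into a $\sup\inf$ over primal elements) and Lemma \ref{lem9222} (which shows the inner $\inf_{z}$ can be removed, yielding a pure $\sup\sup$). The bridge between the single supremum over $\mathcal C_T(x)$ appearing in Lemma \ref{lem9221} and the double supremum over $\mathcal C_t(x)$ and $\mathcal C_{tT}$ appearing in Lemma \ref{lem9222} is the multiplicative decomposition $\mathcal C_T(x) = \mathcal C_t(x)\mathcal C_{tT}$ already recorded in the Remark following \eqref{defCtT}.

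First, I would invoke Lemma \ref{lem9221} to write
\begin{equation*}
\inf\limits_{z\in\mathcal Z_t}\mathbb E\left[V(T,\eta z_T)1_{A}\right]
= \sup\limits_{\zeta\in\bigcup\limits_{x>0}\mathcal C_T(x)}
\inf\limits_{z\in\mathcal Z_t}\mathbb E\left[\left(U(T,\zeta) - \eta\zeta z_T\right)1_A\right].
\end{equation*}
Next, using $\mathcal C_T(x) = \mathcal C_t(x)\mathcal C_{tT}$ and the fact that every $\zeta\in\mathcal C_T(x)$ can be written as $\zeta = \xi\rho$ for some $\xi\in\mathcal C_t(x)$ and $\rho\in\mathcal C_{tT}$, I would rewrite the outer supremum as a triple supremum over $x>0$, $\xi\in\mathcal C_t(x)$, and $\rho\in\mathcal C_{tT}$, giving
\begin{equation*}
\inf\limits_{z\in\mathcal Z_t}\mathbb E\left[V(T,\eta z_T)1_{A}\right]
= \sup\limits_{x>0}\sup\limits_{\xi\in\mathcal C_t(x)}\sup\limits_{\rho\in\mathcal C_{tT}}
\inf\limits_{z\in\mathcal Z_t}\mathbb E\left[\left(U(T,\xi\rho) - \xi\eta\rho z_T\right)1_A\right].
\end{equation*}

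Now, for each fixed $x>0$, Lemma \ref{lem9222} identifies the inner two suprema together with the infimum over $z$ as simply
\begin{equation*}
\sup\limits_{\xi\in\mathcal C_t(x)}\sup\limits_{\rho\in\mathcal C_{tT}}
\mathbb E\left[\left(U(T,\xi\rho) - \xi\eta\right)1_A\right].
\end{equation*}
Substituting this into the previous display and collapsing the outer supremum over $x>0$ with the supremum over $\xi\in\mathcal C_t(x)$ into a single supremum over $\xi\in\bigcup_{x>0}\mathcal C_t(x)$ yields exactly the right-hand side of the corollary. No additional obstacle arises: the two lemmas have been set up to fit together through the $\mathcal C_T(x)=\mathcal C_t(x)\mathcal C_{tT}$ factorization, and monotonicity of all suprema in $x$ makes the rearrangement of the order of the suprema over $x$, $\xi$, and $\rho$ immediate. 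The only minor care needed is to check that the substitution $\zeta = \xi\rho$ is lossless, i.e., every $\zeta\in\mathcal C_T(x)$ decomposes and every such decomposition lies in the product domain, which is precisely the content of the Remark following \eqref{defCtT}.
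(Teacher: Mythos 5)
Your proposal is correct and takes exactly the route the paper intends: the paper's proof is the one-line remark that the corollary follows from Lemmas \ref{lem9221} and \ref{lem9222}, and you have simply unpacked that chain, using the factorization $\mathcal C_T(x)=\mathcal C_t(x)\mathcal C_{tT}$ from the Remark after \eqref{defCtT} to pass from the single supremum over $\mathcal C_T(x)$ in Lemma \ref{lem9221} to the double supremum in Lemma \ref{lem9222}, and then collapsing $\sup_{x>0}\sup_{\xi\in\mathcal C_t(x)}$ into $\sup_{\xi\in\bigcup_{x>0}\mathcal C_t(x)}$. Nothing is missing; this is the same argument, spelled out more explicitly than the paper does.
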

\begin{proof}
The assertion of the corollary follows from Lemma \ref{lem9221} and \ref{lem9222}.
\end{proof}
\begin{Lemma}\label{lemConjugacy}
Let the condition of Lemma \ref{231} hold and $\eta\in\mathcal N_t$ be fixed. Then, we have
\begin{equation}\label{9224}\esssup\limits_{\xi\in\bigcup\limits_{x>0}\mathcal C_t(x)}\left( u(\xi,t,T) - \xi\eta\right) = v(\eta, t,T).
\end{equation}
\end{Lemma}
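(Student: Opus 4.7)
The plan is to combine Lemma \ref{lemSubConjugacy} (which gives the easy direction) with Corollary \ref{cor9221} (which establishes the integrated version of the biconjugacy). Specifically, rearranging the inequality $u(\xi, t, T) \leq v(\eta, t, T) + \xi\eta$ from Lemma \ref{lemSubConjugacy} and taking $\esssup$ over $\xi \in \bigcup_{x>0} \mathcal{C}_t(x)$ immediately yields the ``$\leq$'' direction of \eqref{9224}. So everything rests on upgrading Corollary \ref{cor9221} from an identity between integrals over arbitrary $A \in \mathcal{F}_t$ to an identity between the corresponding $\essinf$ and $\esssup$ random variables.

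For the left-hand side of Corollary \ref{cor9221}, I would show that
$$\inf_{z \in \mathcal{Z}_t} \mathbb{E}\left[V(T, \eta z_T) 1_A\right] = \mathbb{E}\left[v(\eta, t, T) 1_A\right], \quad A \in \mathcal{F}_t.$$
The ``$\geq$'' is immediate from conditioning and $v = \essinf_z \mathbb{E}_t[V(T,\eta z_T)]$. For ``$\leq$'', the family $\{\mathbb{E}_t[V(T,\eta z_T)]\}_{z \in \mathcal{Z}_t}$ is downward-directed (by the concatenation $\bar z := \bar z^1 1_A + \bar z^2 1_{A^c}$ used in the proof of Lemma \ref{lemDualExistence}), so a decreasing sequence $\mathbb{E}_t[V(T,\eta z^n_T)] \searrow v(\eta, t, T)$ exists; monotone convergence, combined with uniform integrability of the negative parts obtained from \cite[Lemma 3.5]{Mostovyi2015}, delivers the equality.

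Symmetrically, for the right-hand side of Corollary \ref{cor9221} I would establish
$$\sup_{\xi \in \bigcup_{x>0} \mathcal{C}_t(x)} \sup_{\rho \in \mathcal{C}_{tT}} \mathbb{E}\left[(U(T, \xi\rho) - \xi\eta) 1_A\right] = \mathbb{E}\left[\esssup_\xi (u(\xi, t, T) - \xi\eta) 1_A\right].$$
For fixed $\xi$, the inner sup equals $\mathbb{E}[u(\xi, t, T) 1_A] - \mathbb{E}[\xi\eta 1_A]$ via the upward-directedness of $\{\mathbb{E}_t[U(T, \xi\rho)]\}_\rho$ shown in the proof of Lemma \ref{231}. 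For the outer sup, I would verify upward-directedness of the family $\{u(\xi,t,T) - \xi\eta\}_\xi$: given $\xi^i \in \mathcal{C}_t(x^i)$, set $B := \{u(\xi^1,t,T) - \xi^1\eta > u(\xi^2,t,T) - \xi^2\eta\} \in \mathcal{F}_t$ and $\xi := \xi^1 1_B + \xi^2 1_{B^c} \in \mathcal{C}_t(x^1 + x^2)$; using the optimizers $\hat\rho^i$ from Lemma \ref{231} and concatenating them as in the proof of that lemma one sees $u(\xi, t, T) = 1_B u(\xi^1, t, T) + 1_{B^c} u(\xi^2, t, T)$, hence the maximum of the two differences is attained.

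Once both equalities are in hand, Corollary \ref{cor9221} reads $\mathbb{E}[v(\eta,t,T) 1_A] = \mathbb{E}[\esssup_\xi (u(\xi,t,T) - \xi\eta) 1_A]$ for every $A \in \mathcal{F}_t$, and since both sides are $\mathcal{F}_t$-measurable this yields $v(\eta,t,T) = \esssup_\xi (u(\xi,t,T) - \xi\eta)$, $\mathbb{P}$--a.s., which is the ``$\geq$'' direction of \eqref{9224}. The main technical obstacle I anticipate is integrability: $v$ may take the value $+\infty$ and the differences $u(\xi,t,T) - \xi\eta$ may have poorly controlled positive parts, so one must carefully invoke the uniform-integrability machinery from \cite[Lemma 3.5]{Mostovyi2015} (together with Assumption \ref{finiteness}) to justify the monotone-convergence exchanges above; restricting the sup over $\xi$ to $\mathcal{C}_t(x)$ for each fixed $x$ and letting $x \to \infty$ at the end, as is done implicitly in Lemma \ref{996}, is the cleanest route.
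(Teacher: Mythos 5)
Your proposal is correct in outline and uses the same core ingredients as the paper, namely Lemma \ref{lemSubConjugacy} for the easy inequality and Corollary \ref{cor9221} as the engine for the hard one, but it takes a genuinely different route from the integral identity to the a.s.\ identity. You upgrade each side of Corollary \ref{cor9221} separately: on the dual side you establish $\inf_{z}\mathbb E[V(T,\eta z_T)1_A]=\mathbb E[v(\eta,t,T)1_A]$ via downward-directedness plus monotone convergence, and on the primal side you establish the symmetric exchange via upward-directedness of $\{u(\xi,t,T)-\xi\eta\}_\xi$ (the $\mathcal F_t$-concatenation $\xi=\xi^11_B+\xi^21_{B^c}$ indeed lies in $\mathcal C_t(x^1+x^2)$ and $u$ commutes with concatenation because $\mathcal C_{tT}$ is closed under $\mathcal F_t$-pasting), then conclude $v(\eta,t,T)=\esssup_\xi(u(\xi,t,T)-\xi\eta)$ from the resulting identity $\mathbb E[v\,1_A]=\mathbb E[\esssup_\xi(\cdot)\,1_A]$ for all $A\in\mathcal F_t$ combined with the a priori a.s.\ ordering from Lemma \ref{lemSubConjugacy}. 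The paper instead avoids proving these two Fubini-type exchanges: it fixes $m$, defines the $\varepsilon$-gap set $A_m:=\{\esssup_{\xi,\rho}(\mathbb E_t[U]-\xi\eta)\le\essinf_z\mathbb E_t[V]-1/m\}$, integrates the pointwise inequality $\mathbb E_t[U(T,\xi\rho)]-\xi\eta\le\mathbb E_t[V(T,\eta z_T)]-\tfrac1m1_{A_m}$ over $A_m$, takes $\sup$ over $(\xi,\rho)$ and $\inf$ over $z$, and then reads off $\mathbb P[A_m]=0$ from Corollary \ref{cor9221}. The paper's route is leaner (no separate exchange lemmas needed), while yours is more modular and makes the ``conditional Fubini'' structure explicit; both require essentially the same care about degenerate values $\pm\infty$ (your argument needs $\esssup_\xi(u(\xi,t,T)-\xi\eta)$ not to be $-\infty$ on a set of positive measure, which Assumption \ref{finiteness} guards against, and the paper's needs the common value of the two sides of Corollary \ref{cor9221} on $A_m$ to be finite, which is secured by the same assumption together with the $\mathcal F_t$-partition device of Lemma \ref{lem4191}). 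So the proposal is sound; your closing remark about restricting to $\mathcal C_t(x)$ and letting $x\to\infty$ is indeed the cleanest way to keep the monotone-convergence exchanges under control.
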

\begin{proof}First, it follows from Lemma \ref{lemSubConjugacy} that for every $\xi\in\bigcup\limits_{x>0}\mathcal C_t(x)$, $\rho\in\mathcal C_{tT}$, and $z\in\mathcal Z_t$, we have
\begin{equation}\label{9222}
\mathbb E_t\left[U(T,\xi \rho) \right]- \xi\eta \leq \mathbb E_t\left[V(T,\eta z_T)\right].
\end{equation}
Let us fix $m\in\mathbb N$ and set
\begin{displaymath}
A_m:=\left\{ \esssup\limits_{\xi\in\bigcup\limits_{x>0}\mathcal C_t(x)}\esssup\limits_{\rho\in\mathcal C_{tT}}\left( \mathbb E_t\left[U(T,\xi \rho) \right]- \xi\eta\right) 
\leq
\essinf\limits_{z\in\mathcal Z_T}\mathbb E_t\left[V(T,\eta z_T)\right] - \frac{1}{m}\right\}\in\mathcal F_t.
\end{displaymath}
Then, in view of \eqref{9222}, for every $\xi\in\bigcup\limits_{x>0}\mathcal C_t(x)$, $\rho\in\mathcal C_{tT}$, and $z\in\mathcal Z_t$, we get
\begin{displaymath}
\mathbb E_t\left[U(T,\xi \rho) \right]- \xi\eta \leq \mathbb E_t\left[V(T,\eta z_T)\right] -\frac{1}{m} 1_{A_m}.
\end{displaymath}
Multiplying both sides by $1_{A_m}$, we obtain
\begin{displaymath}
\mathbb E_t\left[U(T,\xi \rho)1_{A_m} \right]- \xi\eta 1_{A_m}\leq \mathbb E_t\left[V(T,\eta z_T)1_{A_m}\right] -\frac{1}{m} 1_{A_m}.
\end{displaymath}
Taking the expectation, we deduce that
\begin{displaymath}
\mathbb E\left[\left(U(T,\xi \rho)- \xi\eta \right)1_{A_m}\right]\leq \mathbb E\left[V(T,\eta z_T)1_{A_m}\right] -\frac{1}{m} \mathbb P[A_m].
\end{displaymath}
As the above inequality holds for every $\xi\in\bigcup\limits_{x>0}\mathcal C_t(x)$, $\rho\in\mathcal C_{tT}$, and $z\in\mathcal Z_t$, 
%with $\varepsilon':=\varepsilon\mathbb P[A]$, 
 we get
\begin{displaymath}
\sup\limits_{\xi\in\bigcup\limits_{x>0}\mathcal C_t(x)}\sup\limits_{\rho\in\mathcal C_{tT}}\mathbb E\left[\left(U(T,\xi \rho)- \xi\eta \right)1_A\right]\leq \inf\limits_{z\in\mathcal Z_t}\mathbb E\left[V(T,\eta z_T)1_A\right] -\frac{1}{m} \mathbb P[A_m].
\end{displaymath}
Combining the latter inequality with the assertion of Corollary \ref{cor9221}, we obtain that $\mathbb P[A_m] = 0$. As $m\in\mathbb N$ is arbitrary, we conclude that
$$\left\{ \esssup\limits_{\xi\in\bigcup\limits_{x>0}\mathcal C_t(x)}\esssup\limits_{\rho\in\mathcal C_{tT}}\left( \mathbb E_t\left[U(T,\xi \rho) \right]- \xi\eta\right) 
<
\essinf\limits_{z\in\mathcal Z_T}\mathbb E_t\left[V(T,\eta z_T)\right] \right\} = \bigcup\limits_{m\in\mathbb N}A_m$$
has measure $0$. Equivalently, we have
\begin{equation}\nonumber
\esssup\limits_{\xi\in\bigcup\limits_{x>0}\mathcal C_t(x)}\left( \esssup\limits_{\rho\in\mathcal C_{tT}}\mathbb E_t\left[U(T,\xi \rho) \right]- \xi\eta\right) 
=
\essinf\limits_{z\in\mathcal Z_T}\mathbb E_t\left[V(T,\eta z_T)\right],\quad \Pas,
\end{equation}
and thus \eqref{9224} holds. This completes the proof of the lemma.
\end{proof}
%%%%%%%%%%%%% Attempt on sep 22 - end %%%%%%%%%%%%%%
%%%%%%%%%%%%%%%%%%%%%%%
\subsection{The reverse conjugacy}
The reverse conjugacy, or biconjugacy, between $u$ and $v$ is a subject closely related to the Fenchel-Moreau theorem. In the present context, this is a delicate topic, as $u$ and $-v$ are defined as essential suprema, and thus they take values in space of $\mathcal F_t$-measurable extended real-valued functions. Therefore, we cannot apply the standard biconjugacy results from convex analysis, e.g., of Rockafellar \cite{Rok}, directly. The topic of the Fenchel-Moreau theorem for $\bar{\mathbf L}^0$-valued functions has been studied recently, see \cite{Kupper17}. However,  the domains of $u$ and $v$ do not form a dual pair of Banach spaces, and thus these domains do not satisfy the assumptions of \cite{Kupper17}. Therefore, we have to prove biconjugacy by hand. %Also, the duality of so-called conditional maps is considered in \cite{FritelliMaggis}.

\begin{Lemma}\label{lembiconjugacy}
Under the conditions of Lemma \ref{lemConjugacy}, for every \textcolor{black}{$\xi\in \bigcup\limits_{x>0}\mathcal C_t(x)$}, we have $$u(\xi, t,T) = \essinf\limits_{\eta \in \mathcal N_t}\left( v(\eta, t,T) + \xi\eta\right). $$
\end{Lemma}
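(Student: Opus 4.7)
The plan is to establish the two inequalities separately, mirroring the structure of the proof of Lemma \ref{lemConjugacy} with the roles of the primal and dual objects reversed. The easy direction $u(\xi,t,T)\leq\essinf_{\eta\in\mathcal N_t}(v(\eta,t,T)+\xi\eta)$ follows by taking the essential infimum over $\eta\in\mathcal N_t$ in the pointwise bound supplied by Lemma \ref{lemSubConjugacy}.

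For the reverse inequality I would argue by contradiction. Suppose that for some $m\in\mathbb N$ the set $B_m:=\{u(\xi,t,T)+\tfrac1m<\essinf_{\eta\in\mathcal N_t}(v(\eta,t,T)+\xi\eta)\}\in\mathcal F_t$ has positive probability. Applying Lemma \ref{lemNconvex} with $\lambda=1_{B}\in\mathcal B^{+}_t$ for $B:=\{v(\eta^1,t,T)+\xi\eta^1<v(\eta^2,t,T)+\xi\eta^2\}$ shows that the family $\{v(\eta,t,T)+\xi\eta:\eta\in\mathcal N_t\}$ is closed under pairwise minimization, so by \cite[Proposition 2.6.1]{LaurenceBook} one obtains a sequence $(\eta^n)\subset\mathcal N_t$ along which $v(\eta^n,t,T)+\xi\eta^n$ decreases $\mathbb P$-a.s.\ to $\essinf_\eta(v(\eta,t,T)+\xi\eta)$. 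Integrating against $1_{B_m}$ and passing to the limit yields
\[
\inf_{\eta\in\mathcal N_t}\mathbb E\bigl[(v(\eta,t,T)+\xi\eta)1_{B_m}\bigr]\geq \mathbb E\bigl[u(\xi,t,T)1_{B_m}\bigr]+\tfrac{1}{m}\mathbb P[B_m].
\]

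The contradiction will come from showing that the left-hand side actually equals $\mathbb E[u(\xi,t,T)1_{B_m}]$. Using Corollary \ref{cor9221} to expand $\mathbb E[v(\eta,t,T)1_{B_m}]$, this infimum rewrites as
\[
\inf_{\eta\in\mathcal N_t}\sup_{\xi'\in\bigcup_{x>0}\mathcal C_t(x),\,\rho'\in\mathcal C_{tT}}\mathbb E\bigl[(U(T,\xi'\rho')+(\xi-\xi')\eta)1_{B_m}\bigr].
\]
I would then swap $\inf$ and $\sup$ using the non-compact minimax theorem \cite[Theorem B.3]{bankKauppila}, whose hypotheses match the present setup: convexity of $\bigcup_{x>0}\mathcal C_t(x)\times\mathcal C_{tT}$ and of $\mathcal N_t$ (the latter by Lemma \ref{lemNconvex}), affinity of the integrand in $\eta$, concavity in $(\xi',\rho')$, and boundedness in probability of $\mathcal C_{tT}$ under Assumption \ref{NUPBR}. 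After the swap, since $\mathcal N_t$ is a cone, the inner $\inf_\eta\mathbb E[(\xi-\xi')\eta 1_{B_m}]$ equals $0$ when $\xi'\leq\xi$ $\mathbb P$-a.s.\ on $B_m$ (take $\eta=0\in\mathcal N_t$) and $-\infty$ otherwise, by rescaling $\eta\mapsto\lambda\eta$ with $\lambda\to\infty$ a strictly positive element of $\mathcal N_t$ concentrated on $\{\xi-\xi'<0\}\cap B_m$ (such a strictly positive element is supplied by the num\'eraire portfolio under NUPBR). Hence only $\xi'\leq\xi$ on $B_m$ contributes, and by monotonicity of $U(T,\cdot)$ together with Lemma \ref{985} the surviving supremum equals $\mathbb E[u(\xi,t,T)1_{B_m}]$, closing the argument.

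The main obstacle will be the rigorous minimax interchange, as both the primal parameter space (unrestricted $x$) and the scaling of $\eta$ are unbounded. Careful invocation of \cite[Theorem B.3]{bankKauppila} together with the polar structure from Lemma \ref{985} (to absorb the $x$-direction via the identification of $\mathcal C_{tT}$ as the polar of $\mathcal Z_t$) should suffice; a secondary technical point is a patching argument that uses the lattice property of $\mathcal N_t$ from Lemma \ref{lemNconvex} to localize the sign analysis of $\xi-\xi'$ onto subsets of $B_m$ before rescaling.
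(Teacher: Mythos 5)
Your outline reverses the roles of primal and dual correctly at the level of the reduction and the easy inequality, but the proposed minimax swap is where the argument does not go through as written, and the issues are precisely the ones that force the paper to re-run a truncation argument rather than reuse Corollary~\ref{cor9221} directly.

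First, the claimed \emph{concavity in $(\xi',\rho')$} is false. The integrand $\mathbb E[(U(T,\xi'\rho')+(\xi-\xi')\eta)1_{B_m}]$ is concave in the \emph{product} $\xi'\rho'$ (and affine in $\xi'$ separately), but the product map $(\xi',\rho')\mapsto\xi'\rho'$ on $\bigcup_{x>0}\mathcal C_t(x)\times\mathcal C_{tT}$ is bilinear, hence neither convex nor concave; composing a concave $U(T,\cdot)$ with it gives a function that is in general not jointly concave in $(\xi',\rho')$. Since you cannot replace the parameter by the product $c=\xi'\rho'$ either --- because the correction term $(\xi-\xi')\eta$ depends on $\xi'$ alone and the multiplicative decomposition of $c$ is nonunique --- the hypotheses of \cite[Theorem B.3]{bankKauppila} are not met in the form you invoke them. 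The natural fix is to take $\sup_{\rho'}$ first (which, by the lattice argument of Lemma~\ref{231}, yields $\mathbb E[u(\xi',t,T)1_{B_m}]$) and then run the minimax in $(\xi',\eta)$; but this presupposes that $\xi'\mapsto u(\xi',t,T)$ is concave on $\bigcup_x\mathcal C_t(x)$, which is a nontrivial standalone fact requiring an $\mathcal F_t$-measurable reweighting of the continuations $\rho'$ (and careful handling of the null set where $\alpha\xi^1+(1-\alpha)\xi^2=0$). You would need to prove this.

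Second, neither $\bigcup_{x>0}\mathcal C_t(x)$ nor $\mathcal N_t=\bigcup_{y\geq0}\mathcal D_t(y)$ is bounded in probability, so the ``boundedness in probability of $\mathcal C_{tT}$'' you cite is not enough to supply the compactness substitute required by the minimax theorem. The paper deals with this systematically through a truncation and a limit (the $\lim_{x\to\infty}$ in Lemma~\ref{996}, resp.\ $\lim_{y\to\infty}$ in the paper's proof of Lemma~\ref{lembiconjugacy}): the minimax is applied at each fixed truncation level, where both sides are bounded in probability, and then the truncation parameter is sent to infinity. Your sketch mentions ``absorbing the $x$-direction via the polar structure'' but does not actually make this step. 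In fact the paper's proof of the biconjugacy does \emph{not} reuse Corollary~\ref{cor9221}; it mirrors the whole construction --- passing to $\mathcal D_{tT}$, parametrizing by the \emph{product} $\eta z\in\mathcal D_T(y)$ (which avoids the bilinearity problem) and the continuation $\rho\in\mathcal C_{tT}$ (both bounded in probability), defining the truncated stochastic fields $U^n(T,x)=\inf_{0<y\leq n\eta z}(V(T,y)+xy)$, establishing the uniform integrability of $(U^n)^+$ via the estimate \eqref{9283} and \cite[Lemma~3.5]{Mostovyi2015}, and only then passing to the limit. Your shortcut would need an analogous truncation to be rigorous, and once you do that you are essentially reproducing the paper's argument rather than circumventing it.

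A minor further point: you should also justify $\mathbb E[v(\eta,t,T)1_{B_m}]=\inf_{z\in\mathcal Z_t}\mathbb E[V(T,\eta z_T)1_{B_m}]$ before applying Corollary~\ref{cor9221}; this is the exchange of a conditional essential infimum with an unconditional expectation over an $\mathcal F_t$-set, and it uses the downward lattice property of $\{\mathbb E_t[V(T,\eta z_T)]:z\in\mathcal Z_t\}$ established in the proof of Lemma~\ref{lemDualExistence}. This step is routine but is used implicitly.
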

\begin{proof}%[Sketch of the proof]
The proof follows the proof of Lemma \ref{lemConjugacy} above with some minor modifications. Therefore, we do not present the complete proof and only highlight the differences. First, we need to pass from $Z_t$ to the closure of the convex and solid hull of $\{z_T:~z\in\mathcal Z_t\}$, which by the Fatou-convergence-type argument above, similarly to the proof of \cite[Proposition 3.1]{KS}, can be constructed as 
\begin{equation}\label{defDtT}\mathcal D_{tT} :=\left\{ h\in \tbl{m\mathcal F_T}:~h\leq z_T,~for~some~z\in\mathcal Z_t\right\}.
\end{equation} 
Then, (and this is the main step) we need to show that for a given $\xi\in \bigcup\limits_{x>0}\mathcal C_t(x)$ and $A\in\mathcal F_t$, we have
\begin{displaymath}
\lim\limits_{y\to\infty}\sup\limits_{\rho\in\mathcal C_{tT}}\inf\limits_{\eta z \in\mathcal D_T(y)}
\mathbb E\left[ \left(V(T,\eta z) + \eta z \xi\rho\right)1_A\right] \leq 
\sup\limits_{\rho\in\mathcal C_{tT}}\mathbb E\left[ U\left(T,\xi\rho\right)1_A\right].
\end{displaymath}
The latter can be obtained as follows. Let us consider strictly positive elements $\eta\in\mathcal D_t$ and $z\in\mathcal D_{tT}$ (where $\mathcal D_t$ and $\mathcal D_{tT}$ are defined in \eqref{defDt} and \eqref{defDtT}, respectively) such that 
\begin{equation}\label{9281}
\mathbb E\left[V(T,\eta z)\right]<\infty.
\end{equation}
The existence of such elements follows from Assumption \ref{finiteness} (combined with the argument in \textcolor{black}{\cite[Proposition 1]{KS2003}, where it can be proven that the infimum can be reached over the densities of the equivalent martingale measures under NFLVR, this argument combined with passing to the num\'eraire portfolio as a num\'eraire and stochastic utility, or equivalently, by treating the dual problem as in} the proof of \cite[Theorem 3.3]{Mostovyi2015}).

Further, by Assumption \ref{finiteness}, we there exist $\xi\in\mathcal C_t$ and $\rho\in\mathcal C_{tT}$, such that $\mathbb E\left[U(T,\xi\rho)\right]>-\infty$. Then for such $\xi$, $\rho$, $\eta$, and $z$, from the definition of $V$ and since  $0\leq\mathbb E\left[ \xi\rho\eta z\right]<\infty$ (which is a consequence of the respective definitions of $\mathcal D_t$ and $\mathcal D_{tT}$), we get
\begin{equation}\label{9291}
-\infty<\mathbb E\left[U(T,\xi\rho)\right] - y\mathbb E\left[ \xi\rho\eta z\right]\leq \mathbb E\left[ V(T,y\eta z)\right],\quad y>0,
\end{equation}
whereas from the monotonicity of $V(T,\cdot)$ and \eqref{9281}, we get
\begin{equation}\label{9292}
\mathbb E\left[ V(T,y\eta z)\right] \leq \mathbb E\left[ V(T,\eta z)\right]<\infty,\quad y\geq 1.
\end{equation}
Combining \eqref{9291} and \eqref{9292}, we deduce that (strictly positive elements) $\eta \in\mathcal N_t$ and $z \in\mathcal D_{tT}$ satisfy $$-\infty<\mathbb E\left[ V(T,y\eta z)\right] \leq \mathbb E\left[ V(T,\eta z)\right]<\infty,\quad y\geq 1.$$
Next, along the lines of the proof of Lemma \ref{996}, we define
\begin{displaymath}
U^n(T,x) :=\inf\limits_{0<y\leq n \eta z}\left( V(T,y) + xy\right),\quad (x,\omega)\in(0,\infty)\times \Omega,
\end{displaymath}
i.e., from $U$, we pass to a sequence of truncated stochastic fields (similar to the ones in the proof of \cite[Lemma 3.9]{Mostovyi2015}). With such $U^n$'s, as in the proof of Lemma \ref{996}, we can show that  
$$\lim\limits_{n\to\infty}\sup\limits_{\rho\in\mathcal C_{tT}}\mathbb E\left[ U^n(T,\xi\rho)1_{A}\right] = \sup\limits_{\rho\in\mathcal C_{tT}}\mathbb E\left[ U(T,\xi\rho)1_{A}\right].$$
Here the only challenge is to establish the uniform integrability of $(U^n)^{+}(T,\xi\rho)$, $\rho\in\mathcal C_{tT}$. This, however, follows from the following estimates (similar to the ones in \cite[Lemma 3.9]{Mostovyi2015}): for every $n\geq 2$, one can see that 
\begin{equation}\label{9283}
\begin{split}
U^n(T,x)&\leq U(T,x)1_{\{x>-V'(T,2\eta z)\}}\\
&\quad + 2(V(T,\eta z)-V(T,2\eta z))1_{\{x\leq -V'(T,2\eta z)\}},\quad (x,\omega)\in(0,\infty)\times \Omega.\\
\end{split}
\end{equation}
\eqref{9283} implies that for every $n\geq 2$, we have
$$U^n(T,x) \leq \max\left( U(T,x), 2V(T,\eta  z) - V(T,2\eta z)\right),\quad (x,\omega)\in(0,\infty)\times \Omega,$$
and the uniform integrability of $(U^n)^{+}(T,\xi\rho)$, $\rho\in\mathcal C_{tT}$, follows from \cite[Lemma 3.5]{Mostovyi2015}. The remaining parts of the proof of this lemma are very similar to the proof of Lemma \ref{lemConjugacy} and, therefore, they are skipped. 
\end{proof}

%\begin{Remark}\label{remGordan}
%\textcolor{black}{
%The reverse conjugacy in somewhat related settings, i.e., another version of the Fenchel-Moreau theorem for $\bar{\mathbf L}^0$-valued functions,  have been investigated in \cite[Appendix A, p. 2207]{Zit09b}. To the best of my reading the half-page proof is incorrect (starting from line 5 on p. 2207). Both the analysis in the present paper and the results of \cite{Kupper17} suggest that the biconjucacy for $\bar{\mathbf L}^0$-valued functions require a more delicate treatment than the one in \cite[Appendix A, p. 2207]{Zit09b}. Thus, more specifically, the strict inequality (A.11) does not follow from previous argument, which make the subsequent step also questionable or non-valid.
%}
%\end{Remark}

%\subsection{Further conjugacy characterizations}
\begin{Lemma}\label{lem1291}
Let the condition of Lemma \ref{231} hold, and $\xi\in\tbl{\mathcal E^u_t}$. Then there exist $\hat\eta$ in is the closure of $\mathcal N_t$ in $\mathbb L^0$ and $\hat z\in\mathcal Z_t$, such that 
\begin{equation}\label{eqnConjugacy}
%\tbl{{u(X^\pi_t) =}} 
%\essinf\limits_{\eta\in\mathcal N_t}\left(\underbrace{\essinf\limits_{z\in\mathcal Z_t}\mathbb E\left[V\left(\eta z_T\right)|\mathcal F_t\right]}_{ \textcolor{black}{= v(\eta, t)}} + \eta X^\pi_t \right) 
% = \mathbb E\left[ V\left(\hat \eta\hat z_T\right)|\mathcal F_t\right] + \hat \eta X^{\pi}_t
\begin{split}
u(\xi,t,T) &= \essinf\limits_{\eta \in \mathcal N_t}\left( v(\eta, t,T) + \xi\eta\right)\\
%&= \essinf\limits_{\eta \in \tbl{\mathcal E^v_t}}\left( v(\eta, t,T) + \xi\eta\right)\\
&=\mathbb E\left[ V\left(T,\hat \eta\hat z_T\right)|\mathcal F_t\right] + \hat \eta \xi.
\end{split}
\end{equation}
Further, let $\hat\rho$ be the optimizer to \eqref{primalProblem} corresponding to $\xi$.
% and $\hat\eta$ and $\hat z$ be such that \eqref{eqnConjugacy} holds. 
 Then we have
\begin{equation}\label{1296}
\mathbb E_t\left[\hat \rho \hat z_T\right] = 1,\quad on\quad\{\xi>0\},\quad\mathbb P-a.s.
\end{equation}
and
\begin{equation}\label{1295}
U'(T,\xi\hat \rho) = \hat \eta \hat z_T\quad and \quad \xi\hat \rho = -V'(T,\hat \eta\hat z_T)\quad on\quad\{\xi>0\},\quad \mathbb P-a.s.
\end{equation}
%If $\xi>0$ $\mathbb P$-a.s., then so are $\hat \eta$, $\hat \rho$, and $\hat z_T$. 
\end{Lemma}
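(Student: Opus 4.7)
The first equality in \eqref{eqnConjugacy} is immediate from Lemma \ref{lembiconjugacy}; the task is to attain the essinf by a concrete $(\hat\eta,\hat z)$ and to derive the first-order conditions.

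My first step would be to show that the family $\{v(\eta,t,T)+\xi\eta:\eta\in\mathcal N_t\}$ is closed under pairwise minimization. For $\eta^i\in\mathcal N_t$ with $\eta^i\leq y^i z^i_t$, $z^i\in\mathcal Z_0$, and $B\in\mathcal F_t$, the splice $\eta=\eta^1 1_B+\eta^2 1_{B^c}$ lies in $\mathcal N_t$ since $\eta\leq 2\max(y^1,y^2)\bar z_t$ with $\bar z=(z^1+z^2)/2\in\mathcal Z_0$; splicing the dual optimizers $\hat z^i\in\mathcal Z_t$ from Lemma \ref{lemDualExistence} as $\bar z=\hat z^1 1_B+\hat z^2 1_{B^c}\in\mathcal Z_t$ (the supermartingale property on $[t,T]$ is preserved since $B\in\mathcal F_t$) yields $v(\eta,t,T)\leq v(\eta^1,t,T) 1_B+v(\eta^2,t,T) 1_{B^c}$, and taking $B=\{v(\eta^1,t,T)+\xi\eta^1<v(\eta^2,t,T)+\xi\eta^2\}$ produces the pairwise minimum. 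Applying \cite[Proposition 2.6.1]{LaurenceBook} then delivers a decreasing sequence $(\eta^n)\subset\mathcal N_t$ with $v(\eta^n,t,T)+\xi\eta^n\searrow u(\xi,t,T)$ a.s., and Lemma \ref{lemDualExistence} supplies optimal $\hat z^n\in\mathcal Z_t$ realizing $v(\eta^n,t,T)=\mathbb E_t[V(T,\eta^n\hat z^n_T)]$.

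Next, via Lemma \ref{lemG} I would pass to convex combinations $(\tilde\eta^n,\tilde z^n)\in\mathcal N_t\times\mathcal Z_t$ of the $(\eta^n,\hat z^n)$ such that $\tilde\eta^n\tilde z^n_T$ equals the corresponding convex combination of $\eta^n\hat z^n_T$; convexity of $V(T,\cdot)$ and linearity of $\eta\mapsto\xi\eta$ keep the sequence minimizing. The Komlos-type lemma \cite[Lemma A1.1]{DS}, applied after further convex combinations, yields $\tilde\eta^n\to\hat\eta$ a.s. with $\hat\eta$ in the $\mathbb L^0$-closure of $\mathcal N_t$, and the Fatou-convergence / optional-decomposition argument used in the proof of Lemma \ref{lemDualExistence} produces $\hat z\in\mathcal Z_t$ with $\tilde z^n_T\to\hat z_T$ a.s.; jointly, $\tilde\eta^n\tilde z^n_T\to\hat\eta\hat z_T$ a.s. Uniform integrability of $V^-(T,\tilde\eta^n\tilde z^n_T)$ follows from \cite[Lemma 3.5]{Mostovyi2015} together with Assumption \ref{finiteness}, so Fatou's lemma gives
\[\mathbb E_t[V(T,\hat\eta\hat z_T)]+\xi\hat\eta \leq \liminf_n\bigl(\mathbb E_t[V(T,\tilde\eta^n\tilde z^n_T)]+\xi\tilde\eta^n\bigr)\leq u(\xi,t,T).\]

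For the reverse inequality and the first-order conditions, I would apply the pointwise Fenchel inequality $U(T,\xi\hat\rho)\leq V(T,\hat\eta\hat z_T)+\xi\hat\rho\hat\eta\hat z_T$ together with the supermartingale estimate $\mathbb E_t[\hat\rho\hat z_T]\leq 1$ (as in \eqref{8241}) to obtain
\[u(\xi,t,T)=\mathbb E_t[U(T,\xi\hat\rho)]\leq \mathbb E_t[V(T,\hat\eta\hat z_T)]+\xi\hat\eta\,\mathbb E_t[\hat\rho\hat z_T]\leq \mathbb E_t[V(T,\hat\eta\hat z_T)]+\xi\hat\eta,\]
which combined with the previous display forces equality throughout. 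Equality in the last step gives $\mathbb E_t[\hat\rho\hat z_T]=1$ on $\{\xi\hat\eta>0\}$, and pointwise Fenchel equality yields $U'(T,\xi\hat\rho)=\hat\eta\hat z_T$ and $\xi\hat\rho=-V'(T,\hat\eta\hat z_T)$; the Inada conditions $U'(T,0+)=\infty$ and $U'(T,\cdot)>0$ then force $\xi\hat\rho>0$ and $\hat\eta\hat z_T>0$ on $\{\xi>0\}$, so $\hat\eta>0$ there, establishing \eqref{1296} and \eqref{1295}. The main difficulty I anticipate is the joint limit-extraction: the Komlos procedure for $\tilde\eta^n$ and the Fatou-convergence procedure for $\tilde z^n$ are distinct and must be coordinated so that products converge a.s., while $\hat\eta$ in general lies only in the $\mathbb L^0$-closure of $\mathcal N_t$, which is precisely why the biconjugacy is formulated with this closure.
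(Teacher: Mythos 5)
Your proposal follows the same overall architecture as the paper's proof: lattice/pairwise-minimization argument to produce a minimizing sequence, passage to convex combinations via Lemma \ref{lemG}, a Komlos-type extraction of limits $\hat\eta$ and $\hat z$, and a Fatou--Fenchel argument to realize the essential infimum and derive the first-order relations \eqref{1295}--\eqref{1296}. Two points where your write-up has gaps that the paper handles more carefully.

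First, the claim that ``uniform integrability of $V^-(T,\tilde\eta^n\tilde z^n_T)$ follows from \cite[Lemma 3.5]{Mostovyi2015}'' is not justified as stated: that lemma gives uniform integrability of $V^-$ over $\mathcal D(y)$ for a \emph{fixed} $y$, while your $\tilde\eta^n$'s are only constrained to lie in $\mathcal N_t = \bigcup_{y\geq 0}\mathcal D_t(y)$, so the products $\tilde\eta^n\tilde z^n_T$ need not lie in any single $\mathcal D(y)$. The paper sidesteps this issue entirely by applying Fatou's lemma not to $V$ but to the nonnegative Fenchel gap $V(T,\tilde\eta^n\tilde z^n_T) + \tilde\eta^n\tilde z^n_T\,\xi\hat\rho - U(T,\xi\hat\rho)\geq 0$, combined with the integrability of $U(T,\xi\hat\rho)$ established in Lemma \ref{lem4191} and the estimate $\mathbb E_t[\tilde z^n_T\hat\rho]\leq 1$. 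If you replace your Fatou step by this one, the argument closes cleanly; your subsequent Fenchel-equality deduction of \eqref{1295}--\eqref{1296} is then essentially the same as the paper's.

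Second, the joint limit extraction: you correctly flag that the Komlos procedure for $\tilde\eta^n$ and the one for $\tilde z^n$ ``must be coordinated so that products converge a.s.,'' but you do not actually exhibit the coordination. The paper's resolution is to first Komlos the \emph{products} $\eta^n z^n_T$ (as random variables) to obtain an a.s.\ limit $\psi$; then Komlos $\eta^n$ using the \emph{same} convex weights and the \emph{same} subsequence (Lemma \ref{lemG} guarantees that applying identical weights on both sides keeps $\tilde\eta^n\tilde z^n_T\in\mathcal G_t$ and preserves the identity $\sum_k\lambda^n_k\eta^{n+k}z^{n+k}_T=\tilde\eta^n\tilde z^n_T$); finally deduce convergence of $\tilde z^n_T$ by taking the ratio $\psi=\hat\eta\hat z_T$. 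This ordering is what makes the coordination work, so it is not an optional refinement but a necessary part of the proof.
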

\begin{proof}
%First, by Lemma \ref{lemDualExistence}, we can rewrite the left-hand side of \eqref{eqnConjugacy} as
%\begin{equation}\label{eqnConjugacy2}
%u(X^\pi_t) = \essinf\limits_{\eta\in\mathcal N_t}\left(\mathbb E\left[V\left(\eta z_T(\eta)\right)|\mathcal F_t\right] + \eta X^\pi_t \right) .\end{equation}
Let $\eta^1$, $\eta^2$ in $\mathcal N_t$ and $z^1$ and $z^2$ be the corresponding minimizers to \eqref{dualProblem}, then with
\begin{equation}\label{6131}
A :=\left\{
\mathbb E\left[V\left(T,\eta^1 z^1_T\right)|\mathcal F_t\right] + \eta^1 \xi <
\mathbb E\left[V\left(T,\eta^2 z^2_T\right)|\mathcal F_t\right] + \eta^2 \xi\right\}\in\mathcal F_t,
\end{equation}
and via Lemmas \ref{lemNconvex} and \ref{lemG}, we get
\begin{displaymath}\begin{split}
\eta &:=\eta^1 1_{A} + \eta^2 1_{A^c}\in\mathcal N_t,\quad as~well~as\\
 z &:=\frac{1_A \eta^1z^1 + 1_{A^c}\eta^2 z^2}{\eta}{1_{\{\eta\neq 0\}}+\left(1_A  z^1 + 1_{A^c}z^2\right)1_{\{\eta = 0\}}}\in\mathcal Z_t.
\end{split}
\end{displaymath} 
In turn, by concavity of $V$, and using Lemma \ref{lemG}, we obtain
\begin{displaymath}
\begin{split}
\mathbb E\left[V\left(T,\eta z_T \right)|\mathcal F_t\right] + \eta \xi&=\left( \mathbb E\left[V\left(T,\eta z_T \right)|\mathcal F_t\right] +  \eta \xi\right)1_A +\left(\mathbb E\left[V\left(T,\eta z_T \right)|\mathcal F_t\right] + \eta \xi\right)1_A^c\\
&=\left(\mathbb E\left[V\left(T,\eta z_T1_A  \right)|\mathcal F_t\right] + \eta^1 \xi\right)1_A +\left(\mathbb E\left[V\left(T,\eta z_T1_A^c \right)|\mathcal F_t\right]+\eta^2\xi \right)1_A^c\\
%&= \left(\mathbb E\left[V\left(T,\eta^1 z_T1_A  \right)|\mathcal F_t\right] + \eta^1 \xi\right)1_A +\left(\mathbb E\left[V\left(T,\eta^2 z_T1_A^c \right)|\mathcal F_t\right]+\eta^2\xi\right)1_A^c\\
&= \left(\mathbb E\left[V\left(T,\eta^1 z^1_T  \right)|\mathcal F_t\right] + \eta^1 \xi\right)1_A +\left(\mathbb E\left[V\left(T,\eta^2 z^2_T \right)|\mathcal F_t\right]+\eta^2\xi \right)1_A^c\\
&=\min\left(\mathbb E\left[V\left(T,\eta^1 z^1_T  \right)|\mathcal F_t\right] + \eta^1 \xi, \mathbb E\left[V\left(T,\eta^2 z^2_T \right)|\mathcal F_t\right]+\eta^2\xi \right),
\end{split}
\end{displaymath}
where in the last equality, we have used \eqref{6131}. Therefore, using \cite[Theorem A.2.3, p. 215]{Pham09}, we deduce the existence of a sequence 
$\eta^nz^n$, $n\in\mathbb N$, such that
\begin{equation}\label{6132}
\lim\limits_{n\to\infty}\left(\mathbb E\left[V\left(T,\eta^n z^n_T \right)|\mathcal F_t\right] + \eta^n \xi\right)=\essinf\limits_{\eta z\in\mathcal G_t}\left(\mathbb E\left[V\left(T,\eta z_T \right)|\mathcal F_t\right] + \eta \xi\right),\quad \mathbb P-a.s.,
\end{equation}
where we recall that $\mathcal G_t$ is defined in \eqref{defGt}.
%We deduce that
%$$\mathbb E\left[V\left(\eta z_T \right)|\mathcal F_t\right] \tbl{+ \eta X^\pi_t}\leq \min\left(\mathbb E\left[V\left(\eta^1 z_T \right)|\mathcal F_t\right]\tbl{+ \eta^1 X^\pi_t}, \mathbb E\left[V\left(\eta^2 z_T \right)|\mathcal F_t\right]\tbl{+ \eta^2 X^\pi_t}\right),$$
By passing to convex combinations, and by applying Lemma \ref{lemG}, which asserts that a convex combination of elements of $\mathcal G_t$ is an element of $\mathcal G_t$, we may obtain a subsequence of elements of $\mathcal G_t$, which we still denote $\eta^n z^n_T$, $n\in\mathbb N$, and which converges a.s. to a limit, which we denote by $\psi$.

Let us consider $\eta^n$, $n\in\mathbb N$. By passing to convex combinations, we may obtain  a family of convex weights, $\lambda^n_k\in[0,1]$, $k \in\{ 0,\dots, M_n\}$, $n\in\mathbb N$, where $M_n\in\mathbb N$, such that for every $n\in\mathbb N$, $\sum\limits_{k = 0}^{M_n} \lambda^n_k = 1$, and $\tilde \eta^n \triangleq\sum\limits_{k = 0}^{M_n}\lambda^n_k\eta^{n+k}$, $n\in\mathbb N$, converges along a subsequence to a limit, $\hat \eta$, a.s. Applying the same convex weights to $\eta^nz^n$, 
and passing to the same subsequence\footnote{$\lambda$ in \eqref{8231} and \eqref{312} (of Lemma \ref{lemG}) is the same.}, via Lemma \ref{lemG}, we get 
$$\sum\limits_{k = 0}^{M_n}\lambda^n_k\eta^{n+k}z^{n+k}_T = \tilde \eta^n\tilde z^n_T,\quad n\in\mathbb N,$$
for some $\tilde \eta^n\in\mathcal N_t$ and $\tilde z^n\in\mathcal Z_t$, $n\in\mathbb N$. As $\eta^n z^n_T$, $n\in\mathbb N$, converges a.s. to $\psi$, $\tilde \eta^n\tilde z^n_T$, $n\in\mathbb N$, also converges a.s. to the same limit $\psi$. As both $\tilde \eta^n$ and $\tilde \eta^n\tilde z^n_T$, $n\geq 1$, converge, we additionally obtain that $\tilde z^n_T$, $n\in\mathbb N$, converges to a limit, which we denote $\hat z_T$. Therefore, %along such a convergent subsequence, 
we have
\begin{equation}\label{2283}
\psi = \lim\limits_{n\to\infty}\tilde \eta^n\tilde z^n_T = \hat \eta\hat z_T.
\end{equation}
As in the proof of Lemma \ref{lemDualExistence}, one may show that $\hat z_T$ is the terminal value of the element of $\mathcal Z_t$, and thus $\hat z\in\mathcal Z_t$. 

Now, for an arbitrary $\tilde A\in\mathcal F_t$, convexity of $V$, Lemma \ref{lembiconjugacy}  and \eqref{6132} imply that
\begin{displaymath}
\begin{split}
&\lim\limits_{n\to\infty}\mathbb E\left[\left( V(T,\tilde \eta^n\tilde z^n) + \tilde \eta^n\tilde z^n \xi\hat \rho - U(T,\xi\hat \rho)\right)1_{\tilde A}\right] \\
&\leq\lim\limits_{n\to\infty}\mathbb E\left[\left( V(T,\tilde \eta^n\tilde z^n) + \tilde \eta^n \xi - U(T,\xi\hat \rho)\right)1_{\tilde A}\right] = 0.
\end{split}
\end{displaymath}
From the definition of $V$, for every $n\geq 1,$ we have
$$ 0\leq V(T,\tilde \eta^n\tilde z^n_T) + \tilde \eta^n\tilde z^n_T \xi\hat \rho - U(T,\xi\hat \rho),\quad \mathbb P-a.s.,$$
Therefore, Fatou's lemma and \eqref{2283} give
\begin{equation}\label{2284}\mathbb E\left[\left( V(T,\hat \eta\hat z_T) + \hat \eta\hat z_T \xi\hat \rho - U(T,\xi\hat \rho)\right)1_{\tilde A}\right] = 0,\quad \tilde A\in\mathcal F_t.
\end{equation}
\eqref{eqnConjugacy} follows.
In turn, \eqref{2284} together with 
\begin{equation}\nonumber%\label{1293}
V(T,\hat \eta\hat z_T) + \hat \eta\hat z_T \xi\hat \rho  - U(T,\xi\hat \rho)\geq 0,\quad \mathbb P-a.s.
\end{equation}
implies that 
$$V(T,\hat \eta\hat z_T) + \hat \eta\hat z_T \xi\hat \rho  =U(T,\xi\hat \rho),\quad \mathbb P-a.s.,$$
which,  via the definition of $V$, implies \eqref{1295}. In turn, \eqref{1296} follows from the polar structure of $\mathcal C_{tT}$ and $\mathcal Z_t$. 
%the positivity of $\xi$ and thus $\hat \rho$, \eqref{1295} and \eqref{1293}.
%We claim that $\hat \eta\in\mathcal N_t$. First, by convexity of $V$, we deduce that $\mathbb E\left[\max(v(\hat \eta,t),0)\right] <\infty$. 
%
%Thus $\psi = \hat \eta\hat z_T\in\mathcal G_t$. 
%Combining this with \eqref{6132} and using uniform integrability of $V^{-}(g)$, $g\in\mathcal G_t$,  we deduce the assertion of the lemma. 
\end{proof}
\begin{Lemma}\label{lem4191}Let the condition of Lemma \ref{231} hold, $\xi\in\tbl{\mathcal E^u_t}$. Then, with $\hat \rho$, $\hat \eta$, and $\hat z_T$ being as in Lemma \ref{lem1291}, we have: both $U(T,\xi\hat\rho)$ and $V^{+}\left(T,\hat \eta\hat z_T\right)$ belong to $\mathbb L^1(\mathbb P)$. Further, there exists $A_m$, $m\in\mathbb N$, an $\mathcal F_t$-measurable partition of $\Omega$, such that $\hat\eta 1_{A_m}\in \mathcal N_t$  and $V(T,\hat \eta\hat z_T)1_{A_m}\in\mathbb L^1(\mathbb P),$ $m\in\mathbb N$.  
\end{Lemma}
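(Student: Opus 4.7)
The plan is to establish the integrability claims by combining the global Fenchel equality from the proof of Lemma \ref{lem1291}, namely $V(T,\hat\eta\hat z_T) + \hat\eta\hat z_T \xi\hat\rho = U(T,\xi\hat\rho)$ almost surely, with a level-set partition of $\hat\eta$ relative to a strictly positive $z^*\in\mathcal{Z}_0$ (which exists by \cite{KarKar07} under Assumption \ref{NUPBR}) to overcome the absence of a global $\mathbb{L}^1$ bound on $V^-(T,\hat\eta\hat z_T)$.

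For $U(T,\xi\hat\rho)\in\mathbb{L}^1(\mathbb{P})$: since $\xi\hat\rho\in\mathcal{C}_T(x)$ for some $x>0$, monotonicity of $U$ gives $U^+(T,\xi\hat\rho)\leq U^+(T,X_T)$ for some $X\in\mathcal{X}(x)$, and $(U^+(T,X_T))_{X\in\mathcal{X}(x)}$ is uniformly integrable by \cite[Lemma 3.5]{Mostovyi2015} together with Assumption \ref{finiteness}. For the negative part, I fix $\rho_0\in\mathcal{C}_{tT}$ with $\mathbb{E}[U^-(T,\xi\rho_0)]<\infty$ from the definition of $\mathcal{E}^u_t$; optimality of $\hat\rho$ in \eqref{primalProblem} yields $\mathbb{E}_t[U(T,\xi\hat\rho)]\geq\mathbb{E}_t[U(T,\xi\rho_0)]$ almost surely, so rearranging the identity $U^-=U^+-U$ and integrating, $\mathbb{E}[U^-(T,\xi\hat\rho)]\leq\mathbb{E}[U^+(T,\xi\hat\rho)]+\mathbb{E}[U^-(T,\xi\rho_0)]<\infty$.

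The Fenchel equality rewrites as $V(T,\hat\eta\hat z_T)=U(T,\xi\hat\rho)-\hat\eta\hat z_T\xi\hat\rho\leq U(T,\xi\hat\rho)$ almost surely (the subtracted term being nonnegative), whence $V^+(T,\hat\eta\hat z_T)\leq U^+(T,\xi\hat\rho)\in\mathbb{L}^1(\mathbb{P})$. For the partition, I set $A_m:=\{(m-1)z^*_t\leq\hat\eta<m z^*_t\}$, $m\in\mathbb{N}$, which is an $\mathcal{F}_t$-measurable partition of $\Omega$ satisfying $\hat\eta 1_{A_m}\leq m z^*_t$ and hence $\hat\eta 1_{A_m}\in\mathcal{D}_t(m)\subset\mathcal{N}_t$. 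To dominate $V^-(T,\hat\eta\hat z_T)1_{A_m}$, I construct the concatenation $\bar z_s:=z^*_s$ for $s<t$ and $\bar z_s:=z^*_t\hat z_s$ for $s\geq t$, verify $\bar z\in\mathcal{Z}_0$, and observe that on $A_m$ one has $\hat\eta\hat z_T\leq m z^*_t\hat z_T=m\bar z_T$; monotonicity of $V$ then gives $V^-(T,\hat\eta\hat z_T)1_{A_m}\leq V^-(T,m\bar z_T)\in\mathbb{L}^1(\mathbb{P})$ by \cite[Lemma 3.5]{Mostovyi2015}, and combining with $V^+\in\mathbb{L}^1(\mathbb{P})$ gives $V(T,\hat\eta\hat z_T)1_{A_m}\in\mathbb{L}^1(\mathbb{P})$.

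The most delicate point is the verification that $\bar z\in\mathcal{Z}_0$ across the junction time $t$: one exploits $\hat z_t\leq 1$ from the definition of $\mathcal{Z}_t$, so that $\bar z_t X_t=z^*_t\hat z_t X_t\leq z^*_t X_t$, after which the $[0,t]$-supermartingale property of $z^* X$ and the $[t,T]$-supermartingale property of $\hat z X$ can be spliced into a global $[0,T]$-supermartingale property of $\bar z X$ via the tower property; the remaining steps are a routine combination of the Fenchel equality with the uniform integrability lemmas of \cite{Mostovyi2015}.
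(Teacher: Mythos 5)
Your proof is correct, and it reaches the same conclusions as the paper, but the second half takes a genuinely different route. The integrability of $U(T,\xi\hat\rho)$ and of $V^+(T,\hat\eta\hat z_T)$ is handled exactly as in the paper: uniform integrability of $U^+$ via \cite[Lemma~3.5]{Mostovyi2015}, the effective-domain witness $\rho_0$ for $U^-$, and the Fenchel equality for $V^+$. Where you diverge is in the partition. The paper constructs the $A_m$'s from an approximating sequence $(\eta_m)_{m\in\mathbb N}\subset\mathcal N_t$ converging a.s.\ to $\hat\eta$ together with a strictly positive $\eta_0\in\mathcal N_t$, setting $A_m=\{\hat\eta\leq 2\eta_m+\eta_0\}\setminus\bigcup_{k<m}A_k$; your construction instead uses level sets $A_m=\{(m-1)z^*_t\leq\hat\eta<m z^*_t\}$ against a strictly positive deflator $z^*\in\mathcal Z_0$, which is arguably tighter and avoids passing through the approximating sequence. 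For the integrability of $V(T,\hat\eta\hat z_T)1_{A_m}$, the paper appeals again to the Fenchel equality and the already-established integrability of $U(T,\xi\hat\rho)$ (which requires verifying $\mathbb E[\xi\hat\rho\hat\eta\hat z_T 1_{A_m}]<\infty$ by the polar structure), while you bound $V^-(T,\hat\eta\hat z_T)1_{A_m}$ directly via the spliced deflator $\bar z$ and \cite[Lemma~3.5]{Mostovyi2015}. Both work; your concatenation argument makes explicit a step that the paper leaves implicit. One small point to guard: the level-set partition exhausts $\Omega$ only if $0\leq\hat\eta<\infty$ a.s.; nonnegativity is clear (it is the limit of nonnegative dual elements and equals $U'(T,\xi\hat\rho)/\hat z_T$ on $\{\xi>0,\hat z_T>0\}$), and finiteness follows from $\hat\eta\hat z_T=U'(T,\xi\hat\rho)<\infty$ a.s.\ on $\{\xi\hat\rho>0\}$, but you may want to add $A_0:=\{\hat\eta=0\}$ (or absorb it into $A_1$) and a word on why $\hat\eta<\infty$ a.s.\ to make the exhaustion explicit.
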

\begin{proof}
First, by \cite{Mostovyi2015}, $U^{+}(T,\xi\hat \rho)\in\mathbb L^1(\mathbb P)$. By assumption \eqref{effDomP}, there exists $\rho\in\mathcal C_{tT}$, such that $\mathbb E\left[U^{-}\left(T,\xi\rho\right)\right]<\infty$. From optimality of $\hat \rho$, we get
\begin{equation}\label{3131}
\mathbb E_t\left[ U^{-}\left(T,\xi\hat \rho\right)\right]\leq \mathbb E_t\left[ U^{+}\left(T,\xi\hat \rho\right)\right] - \mathbb E_t\left[ U^{+}\left(T,\xi \rho\right)\right] + \mathbb E_t\left[ U^{-}\left(T,\xi \rho\right)\right],
\end{equation}
As $\mathbb E\left[ U^{+}\left(T,\xi \rho\right)\right]<\infty$ (by \cite{Mostovyi2015}) and $\mathbb E\left[ U^{-}\left(T,\xi \rho\right)\right]<\infty$ (by \eqref{effDomP}), we deduce from \eqref{3131} that 
$U^{-}\left(T,\xi\hat \rho\right)\in\mathbb L^1(\mathbb P)$. Therefore, $U(T,\xi\hat\rho)\in \mathbb L^1(\mathbb P).$

To show that $V(T,\hat\eta \hat z_T)\in\mathbb L^1(\mathbb P)$, we use the equality
\begin{equation}\label{3132}
V(T,\hat \eta\hat z_T) + \xi\hat \rho\hat \eta\hat z_T = U(T,\xi\hat \rho),\quad \mathbb P-a.s.
\end{equation}
%By taking the negative part of both sides in \eqref{3132}, and by using nonnegativity of $\xi\hat \rho\hat \eta\hat z_T$, we get
%\begin{displaymath}
%V^{-}\left(\hat \eta\hat z_T \right) = U^{-}\left(\xi\hat \rho\right)\in\mathbb L^1(\mathbb P).
%\end{displaymath}
By taking positive part in \eqref{3132}, we obtain
\begin{displaymath}
V^{+}\left(T,\hat \eta\hat z_T\right) \leq \left(V\left(T,\hat \eta\hat z_T\right) + \xi\hat \rho\hat \eta\hat z_T\right)^{+} = U^{+}\left( T,\xi\hat \rho\right)\in\mathbb L^1(\mathbb P).
\end{displaymath}
Likewise
\begin{displaymath}
\left(V\left(T,\hat \eta\hat z_T \right)+ \xi\hat \rho\hat \eta\hat z_T \right)^{-}= U^{-}\left(T,\xi\hat \rho\right)\in\mathbb L^1(\mathbb P).
\end{displaymath}
To show the existence of an $\mathcal F_t$-measurable partition of $\Omega$, such that $\hat\eta 1_{A_m}\in\mathcal N_t$, $m\geq 1$, first, we observe that it follows from Assumption \ref{NUPBR}, there exists a strictly positive element $\eta_0\in\mathcal N_t$. Let $(\eta_m)_{m\in\mathbb N}\subset\mathcal N_t$ be a sequence, which converges to $\hat \eta$, $\mathbb P$-a.s.
Let $$A_0 :=\emptyset \quad and \quad
A_m :=\left\{ \hat \eta\leq 2\eta_m + \eta_0\right\}\backslash\bigcup\limits_{k = 0}^{n-1}A_k,\quad m\geq 1.$$
Then, by construction on each $A_m$, $\hat \eta1_{A_m}\in\mathcal N_t$, and $A_m$'s are disjoint subsets $\mathcal F_t$. Finally, from $\mathbb P$-a.s. convergence of $\eta_m$, $m\geq 1$, to $\hat \eta$, we deduce that $\mathbb P\left[\left(\bigcup\limits_{m\geq 1}A_m\right)^c \right]= 0$.
Now, $V(T,\hat \eta\hat z_T)1_{A_m}\in\mathbb L^1(\mathbb P)$ by the integrability of $U(\xi\hat \rho)$ and \eqref{3132}. 
\end{proof}
\tbl{
\subsection{Some results from \cite{Mostovyi2015} used above}\label{resMos15}
Some results from \cite{Mostovyi2015} used above are given below in an adjusted form. Let $\mathbb P$ be a probability measure on a measurable space  $(\Omega,
\mathcal F)$. Denote by
$\mathbf{L}^0=\mathbf{L}^0\left(
  \Omega, \mathcal F, \mathbb P\right)$ the vector space of %(equivalence classes of)
random variables on 
$(\Omega, \mathcal F, \mathbb P)$ endowed with the topology of convergence in probability measure $\mathbb P$.
Let $\mathbf{L}^0_{+}$ denote% its positive orthant, i.e.,
 the set of nonnegative random variables on $(\Omega, \mathcal F, \mathbb P)$. 
%whenever the latter integral is well-defined. 
Let $\mathcal{C}, \mathcal{D}$ be polar subsets of
$\mathbf{L}^0_{+}$, that is 
}

\tbl{
  \begin{equation}\label{polarityCD}
    \begin{split}
      \xi\in\mathcal{C} &{\quad\rm if~and~only~if}\quad \mathbb E\left[ \xi \eta \right] \leq 1\quad{\rm
        for~every~}\eta \in\mathcal{D},\\
      \eta\in\mathcal{D} &{\quad \rm if~and~only~if}\quad\mathbb E\left[ \xi \eta \right] \leq 1
      \quad{\rm
        for~every~}\xi \in\mathcal{C}.
    \end{split}
  \end{equation}
and we additionally suppose that 
%both
% $\mathcal{C}$ and $\mathcal{D}$ contain at least one
%  strictly positive element, that is 
   \begin{equation}\label{positivityCD}
     \begin{array}{rcl}
       {\rm there~exists~}&\xi\in\mathcal{C}& {\rm~such~that~}\xi>0,\\
       {\rm there~exists~}&\eta\in\mathcal{D}& {\rm~such~that~}\eta>0.\\
     \end{array}
   \end{equation}
%\begin{equation}\label{positivityCD}
%{\rm there~are~~}\xi^{*}\in\mathcal{C},~ \eta^{*}\in\mathcal{D} {\rm~~such~that~~}\min(\xi^{*},~\eta^{*})>0~~\mathbb P~a.s.
%\end{equation}
}

\tbl{
%Observe that our construction of the abstract sets $\mathcal C$ and $\mathcal D$ is
%similar to the one in \cite{KS}, however we
%do not require a constant to be an element of $\mathcal C$. 
Let us notice a {symmetry} between the sets $\mathcal C$ and $\mathcal D$. % as well as their  convexity and boundedness in $\mathbf{L}^0\left(\mathbb P\right)$.
For $x>0$ and $y>0$ one can define the sets:
\begin{equation}\label{defCxDy}
  \begin{split}
    \mathcal{C}(x) &:= x\mathcal{C} :=\left\{ x\xi: ~\xi\in\mathcal{C}\right\}, \\
    \mathcal{D}(y) &:= y\mathcal{D} :=\left\{ y\eta: ~\eta\in\mathcal{D}\right\}. 
  \end{split}
\end{equation}
}

\tbl{
% \subsection{Utility Field}
Let us consider a \textit{stochastic utility function}
%$\mathcal{F}\otimes\mathcal{B}\left([0, \infty) \right)$ measurable function
$U$: $\Omega\times[0, \infty)\to \mathbb{R}\cup\{-\infty\}$ satisfying  Assumption \ref{Assumption2} below. 
\begin{Assumption}\label{Assumption2}
  For every $ \omega\in \Omega$, the function $x\to U( \omega, x)$ is strictly increasing, 
  strictly concave,  differentiable on $(0,\infty)$, and
  satisfies the Inada conditions:
  \begin{equation}\label{InadaAbstract}
    \lim\limits_{x\searrow 0}U'(\omega, x) =
    \infty \quad{\rm and}\quad \lim\limits_{x\to
      \infty}U'(\omega, x) = 0,
  \end{equation}
where $U'(\cdot, \cdot)$ denotes the partial derivative with respect
  to the second argument. At $x=0$ we suppose that $U(\omega, 0) = \lim\limits_{x\searrow
    0}U(\omega, x)$, this value may be $-\infty$. For every $x\geq0$, we suppose that $U\left(
    \cdot, x \right)$ is measurable.
\end{Assumption}
Define the \textit{conjugate function} $V$ to $U$ as
\begin{equation}\nonumber
  V(\omega, y) := \sup\limits_{x>0}\left( U(\omega, x) - xy \right)
  ,\quad \left(\omega, y \right) \in \Omega\times[0,
  \infty).
\end{equation}
Observe that $-V$ satisfies Assumption \ref{Assumption2}.
}

% \subsection{Optimization problems}
\tbl{
Now we can state the optimization problems:
\begin{equation}\label{primalProblem2}
  u(x) = \sup\limits_{\xi\in\mathcal{C}(x)}\mathbb E\left[ {U}(\xi)\right], \quad x>0,
\end{equation}
\begin{equation}\label{dualProblem2}
  v(y) = \inf\limits_{\eta\in\mathcal{D}(y)}\mathbb E\left[ {V}(\eta)\right], \quad y>0.
\end{equation}
}

% In order to make notations more convenient we define the functionals
% $\mathsf{U},$ $\mathsf{V},$ $\mathsf{U}',$ and $\mathsf{I}$ on the set
% of strictly positive measurable functions that take values in the set
% of measurable functions:
% \begin{displaymath}
% \begin{array}{rl}
%   \mathsf{U}(c)(\omega) := U\left( \omega, c(\omega)\right),&
% \mathsf{V}(h)(\omega) := V\left(\omega, h(\omega)\right),\\
%  \mathsf{U}'(h)(\omega) := U'\left(\omega, h(\omega)\right),&
%   \mathsf{I}(h)(\omega) := I\left( t, \omega, h(\omega)\right). \\
% \end{array}
% \end{displaymath}

% \subsection{Abstract version of theorem \ref{mainTheorem}}

% The value functions $u,$ $-v$ are clearly concave. Define by $u'$ and
% $v'$ the right-continuous versions of the derivatives of $u$ and $v.$

\tbl{
\begin{customthm}{3.2 in \cite{Mostovyi2015}}\label{mainTheorem2}
  Assume that $\mathcal{C}$ and $\mathcal{D}$ satisfy conditions
  (\ref{polarityCD}) and (\ref{positivityCD}). Let Assumption
  \ref{Assumption2} hold and suppose
  \begin{equation}\label{mainCondition2}
    v(y)<\infty~~for~ all~y>0\quad and\quad u(x) > -\infty~~for~ all~x>0.
  \end{equation}
  Then we have:
  \begin{enumerate}
  \item $u(x)< \infty$ for all $x>0,$ $v(y)>-\infty$ for all $y>0.$
    The functions $u$ and $v$ satisfy the biconjugacy relations, i.e.,
    \begin{equation}\label{biconjugacy2}
      \begin{array}{rcl}
        v(y) &=& \sup\limits_{x>0}\left(u(x) - xy\right),\quad y>0,\\
        u(x) &=& \inf\limits_{y>0}\left(v(y) + xy\right),\quad x>0.\\
      \end{array}
    \end{equation}
    The functions $u$ and $-v$ are strictly increasing, strictly concave, continuously differentiable on $(0,
    \infty)$, and satisfy the Inada
    conditions:
    \begin{equation}\nonumber
      \begin{split}
        \lim\limits_{x\searrow 0}u'(x) = \infty, & \quad \lim\limits_{y\searrow 0}-v'(y) = \infty,\\
        \lim\limits_{x\to\infty}u'(x) = 0, & \quad         \lim\limits_{y\to\infty}-v'(y) = 0.
      \end{split}
    \end{equation}
  \item For every $x>0$ the optimal solution $\hat{\xi}(x)$ to (\ref{primalProblem2})
    exists  and is
    unique. For every $y>0$ the optimal solution $\hat{\eta}(y)$ to
    (\ref{dualProblem2})  exists and is unique. If $y=u'(x)$, we have the dual
    relations
    \begin{displaymath}
        \hat{\eta}(y) = {U}'\left(\hat{\xi}(x) \right),
        %,\quad \hat{\xi}(x)= {I}\left(\hat{h}(\eta)\right),
        \quad\mathbb P-{\rm~a.s.}
    \end{displaymath}
and
    \begin{displaymath}
      \mathbb E\left[\hat{\xi}(x)\hat{\eta}(y)\right] = xy.
      \end{displaymath}
 \end{enumerate}% \item
%%%%%%%%%%%%% item 3 %%%%%%%%%%%%%%%%%%%%%%%%%%%%%%%%%%%%%%%
\end{customthm}
}
\tbl{
Let
$\tilde{\mathcal{D}}$ be a subset of $\mathcal{D}$
such that
\begin{itemize}%[(i)]
\item [(i)]
$\tilde{\mathcal{D}}$ is closed with respect to countable convex
combinations,
% i.e., for every sequence $\left(
%   \eta^n\right)_{n\geq 1}\subset \tilde{\mathcal{D}}$ and every
% sequence of positive numbers $\left( a^n \right)_{n\geq 1}$ such
% that $\sum\limits_{n=1}^{\infty}a^n=1$ the function
% $\sum\limits_{n=1}^{\infty}a^n \eta^n$ belongs to
% $\tilde{\mathcal{D}}$,
\item [(ii)]
We have
\end{itemize}
\begin{equation}\nonumber%\label{11191}
  \sup\limits_{\eta\in\mathcal{D} }\mathbb E\left[\xi\eta \right] =
  \sup\limits_{\eta\in\tilde{\mathcal{D}} }\mathbb E\left[ \xi\eta \right], \quad \xi\in\mathcal{C}.
\end{equation}
}

\tbl{
The statement of \cite[Theorem 3.3]{Mostovyi2015} (the part of \cite[Theorem 3.3]{Mostovyi2015} that was used above). 
\begin{customthm}{3.3 in \cite{Mostovyi2015}}\label{secondTheorem2}
Under the conditions of Theorem \ref{mainTheorem2}, we have
    \begin{displaymath}
      \begin{split}
      v(y) & = \inf\limits_{\eta\in\mathcal{\tilde{D}} }
      \mathbb E\left[
      {V}\left(y\eta\right)\right], \quad y>0,\\
%      u(x) &= \sup\limits_{\xi\in\mathcal{\tilde{C}} }
%      \mathbb E\left[
%      {U}\left(x\xi\right)\right], \quad x>0.
     \end{split}
    \end{displaymath}
\end{customthm}
}
\tbl{
\begin{customlemma}{3.5 in \cite{Mostovyi2015}}\label{uiOfVminus}
  Under the conditions of Theorem \ref{mainTheorem2},
  for every $y>0$ the family $\left( {V}^{-}\left(h\right)\right)_{h\in\mathcal{D}(y)}$
  is uniformly integrable.
\end{customlemma}
\begin{customlemma}{3.9 in \cite{Mostovyi2015}}\label{conjugacy}Under the assumptions of Theorem
  \ref{mainTheorem2}, we have
\begin{equation}\label{58121}
v(y)=\sup\limits_{x>0}\left(u(x) - xy\right),\quad y>0.
\end{equation}
\end{customlemma}
}

\section{Stability analysis}\label{secStab}
%%%%%%%%%%%%%%%%%%%%%%%%%%%%%%
Here we assume that the $0$-market consists of one \tbl{risky and one riskless asset (whose price still equals to $1$ at all times).} 
Let $M$, a one-dimensional continuous local martingale, that drives the return process of the \tbl{risky asset}. Throughout this section, $T>0$ is fixed. Then the dynamic of the \tbl{risky asset} is given by
\begin{equation}\label{3301}
R^{0} = M + \lambda\cdot \langle M\rangle
\end{equation}
\tbl{where $\lambda$ is a predictable process such that \begin{equation}\label{2161}
\lambda^2\cdot  \langle M\rangle_T<\infty,\quad \mathbb P-a.s.\end{equation}} Thus, the return of the \tbl{risky asset} (from section \ref{secModel}) has the form \eqref{3301}. For the absence of arbitrage in the sense of Assumption \ref{NUPBR}, finite variation part of the return process has to be absolutely continuous with respect to the  quadratic variation of its martingale part, see \cite{HulleySchweizer}.
We suppose that the riskless asset stays unperturbed and consider perturbed family of returns of \tbl{risky asset}s of the form
\begin{equation}\label{perR}
R^\varepsilon = (1 + \varepsilon\psi)\cdot(M + \lambda(1 + \varepsilon\theta )\cdot \langle M\rangle), \quad \varepsilon\in\mathbb R,
\end{equation}
\tbl{where $\psi$ and $\theta$ are some predictable processes, such that \begin{equation}\label{2162}
\theta^2\cdot \langle M\rangle<\infty,\quad \mathbb P-a.s.,
\end{equation} and $|\psi|$ is uniformly bounded.}
\textcolor{black}{
Perturbations of the input model parameters might appear due to errors in the estimation of the model parameters under a statistical procedure. In connection to many models of the stock price used in practice, $\psi$ corresponds to perturbations of the volatility, and then, ones $\phi$ is fixed, $\theta$ governs the distortions of the drift of the \tbl{risky asset}. We discuss a connection to a different parametrization of perturbations in the following remark. Mathematically, the closest paper, where such perturbations occur, are \cite{MostovyiNumeraire} and \cite{MostovyiSirbuModel} (the case of $\psi\equiv 0$).
}

\begin{Remark}\label{remPert1}
Parametrization of perturbations in the form \eqref{perR} is closely related to the ones that appear in the literature more often:
\begin{equation}\label{perR2}
\begin{split}
R^{\varepsilon} &= R^{0} + (\varepsilon\psi)\cdot M + (\varepsilon \nu)\cdot \langle M\rangle\\
&= (1 + (\varepsilon\psi))\cdot M + (\lambda+ (\varepsilon \nu))\cdot \langle M\rangle.\\
\end{split}
\end{equation}
Here $\varepsilon\psi$ amount to perturbations of the martingale part (of volatility in the simplest settings) and $\varepsilon \nu$ to perturbations of the finite variation part (or drift) of the return of the \tbl{risky asset}. %We will suppose that $\psi$ is a bounded process.
From \eqref{perR2}, one can arrive to \eqref{perR} by assuming that that $\nu$ (linearly) depends on $\varepsilon$, and by making the following reparametrization:
\begin{equation}\label{reparametrization}
\nu = \lambda\nu',\quad \nu' = \psi + \nu'', \quad \nu'' = \theta(1 + \varepsilon\psi).
\end{equation}
The reason for imposing \eqref{perR} instead of \eqref{perR2} is a simple structure of integrability Assumption \ref{asPert} and no issues related to differentiation with respect to a parameter under stochastic integration, as in \cite{Metivier82} and \cite{HuttonNelson}. 
We give further details on how to get the stability results with \eqref{perR2} in Remarks \ref{remPert2} and \ref{remPert3} below.  
\end{Remark}
Now we fix a proportion of the total wealth invested in the \tbl{risky asset}\footnote{Fixing the proportion of wealth invested in the \tbl{risky asset} leads to the admissibility of the corresponding wealth process for every $\eps\neq 0$. If one fixes the number of shares of the \tbl{risky asset} in the portfolio, then for $\eps\neq 0$, under both parametrizations \eqref{perR} and \eqref{perR2}, the associated wealth process can be negative with positive probability, in general.} and investigate the dynamic behavior of the indirect utility under small perturbations of the drift and volatility of the underlying \tbl{risky asset} as in \eqref{perR}.
To make this mathematically precise% and to define the family of forward performance precisely
, we extend the definitions from section \ref{secModel} in a natural way as follows:
$$\mathcal X^\varepsilon(x) \triangleq\left\{X\geq 0:~X = x + H\cdot R^\eps~for~some~R^\eps-integrable~H\right\},\quad x\geq 0,~\eps\in\mathbb R.$$
For every $\eps\in\mathbb R$, the initial wealth $\bar x\geq 0$ and a {\textcolor{black}{predictable and locally bounded} process $\pi$ are the same, but the corresponding family of the wealth processes alters due to different integrators $R^\eps$, i.e., we consider the family
 $$X^{\pi, \eps} = \bar x\mathcal E\left( \pi\cdot R^{\eps}\right),\quad \eps\in\mathbb R.$$
Likewise, for every $\eps\in\mathbb R$, the set of wealth processes in $\mathcal X^\eps(\bar x)$, which equal to  $X^{\pi,\eps}$ on $[0,t]$, is denoted by $\mathcal A^\eps(X^{\pi,\eps}_t, t)$, that is
\begin{equation}\label{defAeps}
\mathcal A^\eps(X^{\pi,\eps}_t, t) :=\left\{\tilde X\in\mathcal X^\eps(\bar x):~ \tilde X_s = X^{\pi,\eps}_s,~for~s\in[0,t],~\mathbb P-a.s.
\right\},\quad \eps\in\mathbb R.
\end{equation}
Finally the family of dynamic indirect utilities associated with $\pi$ up to $T$ is defined as
\begin{equation}\nonumber%\label{defuD}
u^\eps(X^{\pi,\eps}_t, t, T) :=\esssup\limits_{\tilde X\in\mathcal A^\eps(X^{\pi,\eps}_t, t)}\mathbb E\left[ U\left( T,\tilde X_T\right)|\mathcal F_t\right], \quad t\in[0,T],~\eps\in\mathbb R,
\end{equation}
for brevity, we denote
\begin{equation}\label{3305}
J^{\eps, T}_t :=u^\eps(X^{\pi,\eps}_t, t, T), \quad t\in[0,T],~\eps\in\mathbb R.
\end{equation}
%For every $\eps\in\mathbb R$, with $\mathcal A^\eps(X^{\pi,\eps}_t,t)$ denoting the set of admissible wealth processes that can be obtained by following the trading strategy $\pi$ up to time $t$, in the market with the return process $R^\eps$. This is generalization of the definition \eqref{defA} from section \ref{secModel} to a family of stock price returns. 
%For a fixed predictable and locally bounded process $\pi$, and every $\varepsilon \in\mathbb R$, we can define the associated family of forward performance processes as 
%\begin{displaymath}
%u^\eps(X^{\pi,\eps}_t, t,T) :=\esssup\limits_{X\in\mathcal A^\eps(X^{\pi,\eps}_t, t)}\mathbb E_t\left[ U(\tilde X_T)\right],\quad \varepsilon\in\mathbb R.
%\end{displaymath} 
%As usually, we will denote by $\mathcal X(\varepsilon, x)$ the set of nonnegative wealth processes corresponding to the return $R^{\varepsilon}$, i.e.,
%\begin{equation}\nonumber
%\mathcal X(\varepsilon, x) :=\left\{x\mathcal E\left(\pi\cdot R^{\varepsilon} \right):~\pi~is~a~predictable,~R^{\varepsilon}-integrable~process \right\},
%\end{equation}
For every $\varepsilon \in\mathbb R$, let us set 
\begin{equation}\label{defEta}
\eta^\varepsilon = -\varepsilon\lambda\theta
\quad and\quad 
L^{\varepsilon} :=\mathcal E\left( \eta^{\varepsilon}\cdot R^{0} \right).
\end{equation}
%and
%\begin{equation}\label{defL}
%L^{\varepsilon} :=\mathcal E\left( \eta^{\varepsilon}\cdot R^{0} \right).
%\end{equation}
Note that $L^{\varepsilon}\in\mathcal X^0(1)$ for every $\varepsilon$. The processes $L^\eps$ drive the correction terms in Proposition \ref{thmDeriv}.

\begin{Remark}\label{remPert2}
If one chooses perturbations \eqref{perR2}, 
%then one still gets the multiplicative structure of \eqref{1280} in Lemma \ref{keyLem}, i.e., $\mathcal X(1, \varepsilon) = \frac{\mathcal X(0,1)}{L^{\varepsilon}}$ 
the $\eta^\varepsilon$ and $L^\varepsilon$ should be defined as
\begin{equation}\nonumber
{\eta}^{\varepsilon} :=\left(\lambda - \frac{\lambda + \varepsilon \nu}{1 + \varepsilon \psi}\right)\quad and \quad L^{\varepsilon} :=\mathcal E\left(\eta^{\varepsilon}\cdot R^{0} \right).%,\quad where\quad\eta^{\varepsilon} :=\left(\lambda - \frac{\lambda + \varepsilon \nu}{1 + \varepsilon \psi}\right).
\end{equation}
This leads to the same heuristic limiting formulas, but stronger integrability conditions (than the one in Assumption \ref{asPert}) are needed to complete proofs. 
\end{Remark}
\subsection{Heuristic derivative of $L^\varepsilon$} 
We denote 
\begin{equation}\label{defBarR}
\bar R = (\lambda \theta)\cdot R^0.
\end{equation}
Then $$L^\varepsilon = \mathcal E\left( \eta^\varepsilon\cdot R^0\right) =  \mathcal E\left( - \varepsilon\cdot \bar R\right) = \exp\left(-\varepsilon \bar R - \tfrac{1}{2}\varepsilon^2 \langle \bar R\rangle\right).$$
We set $F:=- \bar R_T$.
Therefore, we get
\begin{displaymath}
\frac{\partial L^\varepsilon_T}{\partial \varepsilon} = L^\varepsilon_T\left(-\bar R_T - \varepsilon \langle\bar R\rangle_T\right),\quad \left.\frac{\partial L^\varepsilon_T}{\partial \varepsilon} \right|_{\varepsilon = 0} = -\bar R_T = F.
\end{displaymath}
Similarly, we obtain
\begin{displaymath}
\frac{\partial\frac{1}{ L^\varepsilon_T}}{\partial \varepsilon} = \frac{1}{L^\varepsilon_T}\left(\bar R_T + \varepsilon \langle\bar R\rangle_T\right),\quad \left.\frac{\partial\frac{1}{ L^\varepsilon_T}}{\partial \varepsilon} \right|_{\varepsilon = 0} = \bar R_T = -F.
\end{displaymath}

\begin{Remark}\label{remPert3}
Here we show that under \eqref{perR2}, we get the same heuristic formulas for the derivatives of $L^\varepsilon$. 
We recall that under \eqref{perR2}, the corresponding $\eta^\varepsilon$ and $L^\varepsilon$ are given by 
$$\eta^{\varepsilon} :=\left(\lambda - \frac{\lambda + \varepsilon \nu}{1 + \varepsilon \psi}\right)\quad and \quad L^{\varepsilon} :=\mathcal E\left(\eta^{\varepsilon}\cdot R^{0} \right).
$$
Then, by direct computations and via the reparametrization formulas \eqref{reparametrization}, we obtain
%\begin{displaymath}
%\begin{split}
%%\frac{\partial L^\varepsilon}{\partial \varepsilon}& = L^\varepsilon \left(\left( -\frac{\nu(1 + \varepsilon\psi) - \psi(\lambda + \varepsilon \nu)}{(1 + \varepsilon \psi)^2}\right)\cdot R^0 
%%\right.\\
%%&\quad \left.
%%- 
%%\left(\lambda -\frac{\lambda + \varepsilon\nu}{1 + \varepsilon \psi} \right)\left(-\frac{\nu(1 + \varepsilon \psi)-\psi(\lambda + \varepsilon\nu)}{(1 + \varepsilon \psi)^2} \right)\cdot\langle R^0\rangle
%%\right)\\
%%& = L^\varepsilon \left(\left( -\frac{\nu - \psi\lambda}{(1 + \varepsilon \psi)^2}\right)\cdot R^0 -\left(\lambda -\frac{\lambda + \varepsilon\nu}{1 + \varepsilon \psi} \right)\left(-\frac{\nu-\psi\lambda }{(1 + \varepsilon \psi)^2} \right)\cdot\langle R^0\rangle
%%\right)\\
%\left.\frac{\partial L^\varepsilon_T}{\partial \varepsilon}\right|_{\varepsilon = 0} 
%& = F.%\underbrace{(\psi\lambda - \nu)\cdot R^0}_{=:F}.\\
%\end{split}
%%(\psi\lambda - \nu)\cdot R^0_T
%\end{displaymath}
%Likewise,
%\begin{displaymath}
%\begin{split}
%%\frac{\partial\frac{1}{L^\varepsilon}}{\partial \varepsilon}& = \frac{1}{L^\varepsilon}
%%\left( -\frac{\partial \eta^\varepsilon}{\partial \varepsilon}\cdot R^0 + \left(\eta^\varepsilon \frac{\partial \eta^\varepsilon}{\partial \varepsilon}\right)\cdot \langle R^0\rangle\right)\\
%\left.\frac{\partial \frac{1}{L^\varepsilon_T}}{\partial \varepsilon}\right|_{\varepsilon = 0}
%& = - F.
%%-(\lambda \psi - \nu)\cdot R^0 = -F.\\
%\end{split}
%\end{displaymath}
\begin{displaymath}
\left.\frac{\partial L^\varepsilon_T}{\partial \varepsilon}\right|_{\varepsilon = 0} 
 = F\quad and\quad 
\left.\frac{\partial \frac{1}{L^\varepsilon_T}}{\partial \varepsilon}\right|_{\varepsilon = 0} = - F.
\end{displaymath}
%%%%%%%
Thus, the asymptotic behavior of $L^{\eps}$'s is similar under both parameterizations of perturbations. %Thus, the asymptotic behavior is the same under both parametrization of perturbations, and even the technical difficulty of differentiation with respect to a parameter under a stochastic integral can be solved along the line of research started in \textcolor{black}{\cite{Nev75}}.
%%%%%%%
\end{Remark}
\subsection{Rigorous derivation}
For $t\in[0,T]$, and $\hat\rho$ and $\hat z_T$ being associated with $X^{\pi, 0}_t$ (via Lemma \ref{lem1291}), 
let us define the probability measure $\mathbb R^t$ as 
\begin{equation}\label{defMR}
\left.\frac{d \mathbb R^t}{d\mathbb P}\right|_{\mathcal F_s} := 1_{\{s\in[0,t]\}} + \mathbb E_s\left[\hat \rho\hat z_T\right]1_{\{s>t\}},\quad s\in[0,T].
\end{equation}
Note that Lemma \ref{lem1291} (via \eqref{1296}) ensures that $\mathbb R^t$ is a probability measure. 
Finally, we suppose that the perturbations are sufficiently bounded in the following sense.\begin{Assumption}\label{asPert}
The process $|\psi|$ is uniformly bounded from above and there exists a constant $\bar c>0$, such that 
$$\exp\left(\bar c( |\bar R_T| + \langle \bar R\rangle_T)\right)\in\mathbb L^1(\mathbb R^t),\quad t\in[0,T],$$
where the probability measure $\mathbb R^t$ and the process $\bar R$ are defined in \eqref{defMR} and  \eqref{defBarR}, respectively.
\end{Assumption}
%\begin{Remark}
%If the relative risk aversion, \tbl{$A(T,x):= -\frac{U''(T, x)x}{U'(T, x)}$, $x>0$,} is greater than $1+\varepsilon''$, uniformly in $\omega$, for some $\varepsilon''>0$, Assumption \ref{asPert} holds if $|\bar R_T| + \langle \bar R\rangle_T$ have some exponential moment under $\mathbb P$. This can be shown following the \cite[Remark 3.5]{MostovyiSirbuModel}, it is therefore skipped.  
%\end{Remark}
We also need to strengthen the assumptions on $U$.
\begin{Assumption}\label{rra}
%Here we will suppose that there exist constants $\gamma_1>0$ and $\gamma_2>0$, such that for every $x>0$ and $z\in(0,1]$, we have
%\begin{displaymath}
%U'(zx)\leq z^{-\gamma_1} U'(x)\quad  and \quad V'(zx) \geq z^{-\gamma_2}V'(x).
%\end{displaymath}
For every $\omega\in \Omega$, $U(T, \cdot)$ is  a strictly concave, strictly increasing, continuously differentiable, and 
there exist positive constants $\gamma_1>0$ and  $\gamma_2>0$, such that for every $x>0$ and $z\in(0,1]$, we have
\begin{equation}\label{2191}
\begin{split}
U'(T, zx)\leq z^{-\gamma_1} U'(T, x)
%-V'(T, \omega, x/z)\leq  -V'(T,\omega, x)z^{\gamma_1}
\quad  {\rm and} \quad 
%U'(T, x/z)\leq z^{-\gamma_2} U'(T, x).
-V'(T, zx)\leq -V'(T, x)z^{-\gamma_2}.
\end{split}
\end{equation}
For every $x\geq 0$, $U(T,x)$ is measurable. 
\end{Assumption}
\begin{Remark}
Assumption \ref{rra} holds if either relative risk aversion, \tbl{ $A(x):=-\frac{U''(T, x)x}{U'(T, x)}$, $x>0$,} or relative risk tolerance  \tbl{of $U(T,\cdot)$} \tbl{at $x$, given by $-\frac{yV''(T,y)}{V'(T,y)}$ for $y = U'(T,x)$, $x>0$,}  is bounded away from $0$ and $\infty$ uniformly in $\omega\in\Omega$, see e.g., \cite[Lemma 5.12]{MostovyiSirbuModel}.
\end{Remark}\tbl{
\begin{Remark}\label{remInada}
Condition \eqref{2191} implies the Inada conditions. This can be shown as follows. Let us fix $\omega\in\Omega$. Applying $U'(T,\cdot)$ to both sides of the second inequality in \eqref{2191}, and since $U'(T,\cdot)$ is decreasing, we get:
\begin{equation}\label{2205}
zx \geq U'(T, z^{-\gamma_2}(-V'(T,x))),\quad x>0, z\in(0,1].
\end{equation}
Now for $x = U'(T,1)$, $-V'(T,x)=1$, and, in \eqref{2205}, we have
$$zU'(T,1) \geq U'(T, z^{-\gamma_2}),\quad z\in(0,1].$$
Taking the limit as $z\searrow 0$, we deduce that 
$$0 \geq \lim\limits_{\tilde x\to\infty}U'(T, \tilde x).$$
Similarly, from the first inequality in \eqref{2191}, applying $-V'(T,\cdot)$ to both sides, and since $-V'(T,\cdot)$ is decreasing, we get
\begin{equation}\label{2206}
zx \geq -V'(T,z^{-\gamma_1}U'(T, x)),\quad x>0,z\in(0,1].
\end{equation}
For $x = -V'(T, 1)$, we have $U'(T, x) = 1$, and therefore in \eqref{2206}, we obtain
$$-V'(T, 1)z \geq -V'(T,z^{-\gamma_1}),\quad z\in(0,1].$$
Taking the limit as $z\searrow 0$, we deduce that 
$$0\geq -\lim\limits_{\tilde z\to\infty}V'(T, \tilde z).$$
By conjugacy between $U(T,\cdot)$ and $V(T,\cdot)$, the latter inequality implies that $\lim\limits_{\tilde x\to 0}U'(T,\tilde x) = \infty$.
\end{Remark}
}

\tbl{
\begin{Lemma}\label{lemMichael}
Let $T>0$ be fixed and consider a family of \tbl{risky asset}s parametrized by $\varepsilon\in\mathbb R$, whose returns are given by  \eqref{perR}. Let us suppose the validity of Assumption \ref{asPert} and \ref{rra}, and  
\begin{equation}\label{2192}
u^0(z,0,T)>-\infty\quad {\rm and} \quad \sup\limits_{x>0}\left( u^0(x,0,T) - xz\right)<\infty,\quad z>0.
\end{equation}
%that is, we suppose, for $\varepsilon = 0$, the validity of Assumption \ref{finiteness}. 
 Then,  
there exists $\varepsilon_0>0$, such that for every $\varepsilon \in(-\varepsilon_0, \varepsilon_0)$,  
the pair of traded assets, whose returns are given by $0$ and $R^\varepsilon$, satisfy NUPBR, and 
 \begin{equation}\label{2163}
 u^\varepsilon(z,0,T)>-\infty\quad {\rm and} \quad \sup\limits_{x>0}\left( u^\varepsilon(x,0,T) - xz\right)<\infty,\quad z>0,
 \end{equation}
 that is both Assumptions \ref{NUPBR} and \ref{finiteness} hold for every $\varepsilon \in(-\varepsilon_0, \varepsilon_0)$.
\end{Lemma}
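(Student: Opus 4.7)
The plan is to produce an explicit strictly positive local martingale deflator for each perturbed market, identify it with $Y^0 L^\varepsilon$ at time $T$, and then push the base-market finiteness through Assumptions \ref{rra} and \ref{asPert}.

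\textbf{NUPBR via an explicit deflator.} First I would set $Y^\varepsilon := \mathcal E(-\lambda(1+\varepsilon\theta)\cdot M)$, which is a strictly positive continuous local martingale because $(1+\varepsilon\theta)^2\lambda^2\cdot\langle M\rangle_T \leq 2\lambda^2\cdot\langle M\rangle_T + 2\varepsilon^2\lambda^2\theta^2\cdot\langle M\rangle_T <\infty$ a.s., the first summand by \eqref{2161} and the second by Assumption \ref{asPert} which implies $\langle\bar R\rangle_T<\infty$ a.s. Applying It\^o's product rule to $Y^\varepsilon\cdot\mathcal E(H\cdot R^\varepsilon)$ for any $R^\varepsilon$-integrable $H$, the drift of $H\cdot R^\varepsilon$, which equals $H(1+\varepsilon\psi)(1+\varepsilon\theta)\lambda\cdot d\langle M\rangle$, is exactly cancelled by the cross-bracket $-\lambda(1+\varepsilon\theta)H(1+\varepsilon\psi)\cdot d\langle M\rangle$; so $Y^\varepsilon\cdot\mathcal E(H\cdot R^\varepsilon)$ is a nonnegative local martingale, hence a supermartingale. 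Thus $Y^\varepsilon\in\mathcal Z^\varepsilon_0$ and NUPBR holds for the $\varepsilon$-market whenever $|\varepsilon|$ is small enough that $1+\varepsilon\psi>0$ (which uses the uniform boundedness of $|\psi|$). A direct computation with stochastic logarithms, using $\bar R = (\lambda\theta)\cdot R^0$ and $\langle\bar R\rangle = \lambda^2\theta^2\cdot\langle M\rangle$, then yields the identity $Y^\varepsilon_T = Y^0_T L^\varepsilon_T$.

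\textbf{Dual finiteness $v^\varepsilon(y,0,T)<\infty$.} Since $Y^\varepsilon\in\mathcal Z^\varepsilon_0$, one has $v^\varepsilon(y,0,T)\leq\mathbb{E}[V(T,yY^\varepsilon_T)] = \mathbb{E}[V(T,yY^0_T L^\varepsilon_T)]$. Integrating the bound $-V'(T,zx)\leq -V'(T,x)z^{-\gamma_2}$ over $z\in[c,1]$ produces, for $c\in(0,1]$, the estimate $V(T,cx) \leq V(T,x) + (-V'(T,x))x\,G(c)$ with $G(c):=\int_c^1 s^{-\gamma_2}\,ds$; for $c\geq 1$, monotonicity gives $V(T,cx)\leq V(T,x)$. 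Combining,
\begin{displaymath}
\mathbb{E}\bigl[V^+(T,yY^\varepsilon_T)\bigr] \leq \mathbb{E}\bigl[V^+(T,yY^0_T)\bigr] + \mathbb{E}\bigl[(-V'(T,yY^0_T))\,yY^0_T\,G(L^\varepsilon_T)\,1_{\{L^\varepsilon_T\leq 1\}}\bigr].
\end{displaymath}
The first summand is finite by \eqref{2192} together with Lemma \ref{uiOfVminus}. For the second, $G(c)$ grows at most as $C_{\gamma_2}\exp((\gamma_2-1)^+|\log c|)$, and $|\log L^\varepsilon_T|\leq|\varepsilon|\,|\bar R_T| + \tfrac12\varepsilon^2\langle\bar R\rangle_T$, so the indicator factor is bounded by $\exp(C|\varepsilon|(|\bar R_T|+\langle\bar R\rangle_T))$ for some constant $C$. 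I would then apply H\"older's inequality after passing to the pairing measure $\mathbb R^0$ from \eqref{defMR}, under which the density $\hat\rho\hat z_T$ encodes (up to a positive constant) the product $(-V'(T,\hat y Y^0_T))\hat y Y^0_T$ via the conjugacy relation \eqref{1295} and use the exponential moment estimate in Assumption \ref{asPert} to conclude that the second summand is finite, uniformly for $|\varepsilon|<\varepsilon_0$ and uniformly for $y$ in compact subsets of $(0,\infty)$. The standing inequality $\sup_x(u^\varepsilon(x,0,T)-xy)\leq v^\varepsilon(y,0,T)$ then delivers the second half of \eqref{2163}.

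\textbf{Primal lower bound $u^\varepsilon(z,0,T)>-\infty$.} I would proceed symmetrically. Integrating the first inequality of Assumption \ref{rra} gives, for $c\in(0,1]$, $U(T,cx)\geq U(T,x) - U'(T,x)x\tilde G(c)$ with $\tilde G(c):=\int_c^1 s^{-\gamma_1}\,ds$. Taking $X^\varepsilon_T := -V'(T,yY^\varepsilon_T)$ for a suitable $y>0$, which by the optional decomposition of Karatzas-Kardaras for NUPBR markets lies in $\mathcal C^\varepsilon_T(z)$ once $\sup_{z'\in\mathcal Z^\varepsilon_0}\mathbb{E}[X^\varepsilon_T z'_T]\leq z$, one controls $\mathbb{E}[U^-(T,X^\varepsilon_T)]$ via the same H\"older-under-$\mathbb R^0$ scheme, now with the growth bound on $U$ playing the role of the one on $V$. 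Admissibility is checked by comparing $\mathbb{E}[X^\varepsilon_T Y^\varepsilon_T]$ with $\mathbb{E}[X^0_T Y^0_T]$ via the identification $Y^\varepsilon_T = Y^0_T L^\varepsilon_T$ and Assumption \ref{asPert}; shrinking $\varepsilon_0$ if necessary preserves the inequality.

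\textbf{Main obstacle.} The substantive difficulty is in the dual-finiteness step: reconciling the possibly super-polynomial growth of $V$ near zero (the case $\gamma_2>1$) with the merely exponential integrability in Assumption \ref{asPert}. The explicit identification $Y^\varepsilon_T=Y^0_T L^\varepsilon_T$ reduces this to bounding $(-V'(T,yY^0_T))yY^0_T$ multiplied by an exponential of $|\varepsilon|(|\bar R_T|+\langle\bar R\rangle_T)$, and the pairing measure $\mathbb R^0$ is precisely designed so that the first factor becomes (a multiple of) its density, enabling a clean H\"older application against the exponential moment in Assumption \ref{asPert}.
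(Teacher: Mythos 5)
Your NUPBR step and the Yor identity $Y^\varepsilon_T=Y^0_TL^\varepsilon_T$ are fine, and your finite-difference reformulation of Assumption~\ref{rra} is equivalent to the FTC expansion the paper uses. The overall strategy — anchor to a base-model object, pass it to the perturbed market by multiplication or division by $L^\varepsilon$, then integrate the $\gamma$-power bounds against the exponential moment in Assumption~\ref{asPert} — is also the right one. The problem is the choice of anchor.

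In the dual-finiteness step you compare $\mathbb E[V^+(T,yY^\varepsilon_T)]$ to $\mathbb E[V^+(T,yY^0_T)]$ and assert the latter is finite ``by \eqref{2192} together with Lemma~\ref{uiOfVminus}.'' That does not follow: \eqref{2192} gives $v^0(y,0,T)<\infty$, i.e.\ finiteness of the \emph{infimum} of $\mathbb E[V(T,yz_T)]$ over $z\in\mathcal Z_0$, and Lemma~\ref{uiOfVminus} only controls $V^-$. For the specific minimal-martingale-type deflator $Y^0=\mathcal E(-\lambda\cdot M)$, which is generally \emph{not} the dual optimizer, $\mathbb E[V^+(T,yY^0_T)]$ can be $+\infty$. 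The second, and deeper, issue is the H\"older step: you claim the $\mathbb R^0$-density $\hat\rho\hat z_T$ encodes $(-V'(T,\hat y\,Y^0_T))\hat y\,Y^0_T$ via \eqref{1295}. But \eqref{1295} relates $\hat\rho\hat z_T$ to $(-V'(T,\hat\eta\hat z_T))\,\hat\eta\hat z_T / (\xi\hat\eta)$, where $\hat z$ is the \emph{dual optimizer}. These coincide with your expression only when $\hat z$ happens to equal $Y^0$, which is a special case, not the general situation. So Assumption~\ref{asPert}, which is calibrated to the density $\hat\rho\hat z_T$, does not plug into your estimate. A symmetric objection applies to your primal step: building $X^\varepsilon_T=-V'(T,yY^\varepsilon_T)$ and verifying $\sup_{z'\in\mathcal Z^\varepsilon_0}\mathbb E[X^\varepsilon_Tz'_T]\le z$ is both unnecessary and incomplete as stated (comparing only against $Y^\varepsilon$ does not bound the supremum).

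The fix, which is what the paper does, is to anchor to the actual base-model optimizers. On the primal side, take the optimal $X^{\pi,0}\in\mathcal X^0(x)$ (existence follows from \eqref{2192}, NUPBR, and Theorem~\ref{mainTheorem2}); the stochastic-exponential algebra you already derived shows $X^{\pi,0}/L^\varepsilon\in\mathcal X^\varepsilon(x)$, so $u^\varepsilon(x,0,T)\ge\mathbb E[U(T,X^{\pi,0}_T/L^\varepsilon_T)]$ and the integrability of the FTC correction term follows from Assumption~\ref{rra} and~\ref{asPert} precisely because $U'(T,X^{\pi,0}_T)X^{\pi,0}_T$ is, up to a constant, the density of $\mathbb R^0$. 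On the dual side, take the dual optimizer $\hat z\in\mathcal Z_0$ (not $Y^0$): the same Yor computation that gives $X^\varepsilon L^\varepsilon\in\mathcal X^0(1)$ for $X^\varepsilon\in\mathcal X^\varepsilon(1)$ shows $\hat zL^\varepsilon\in\mathcal Z^\varepsilon_0$, and then $\mathbb E[V(T,y\hat z_TL^\varepsilon_T)]$ is controlled by the same FTC-plus-Assumption-\ref{asPert} argument, where the density $\hat\rho\hat z_T$ now matches the object appearing in the estimate.
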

\begin{Remark}
In particular, in view or Remark \ref{remInada}, under the conditions of 
Lemma \ref{lemMichael}, the results  of Section \ref{secDualChar} apply to perturbed models, for every  $\varepsilon$ in some neighborhood of the origin.  
\end{Remark}}

\begin{proof}[Proof of Lemma \ref{lemMichael}]\tbl{
Conditions \eqref{2161} and \eqref{2162} imply that no unbounded profit with bounded risk holds for both  the unperturbed model (corresponding to $\varepsilon = 0$) and perturbed models ($\varepsilon \neq 0$), as $\mathcal E\left( -(\lambda(1 + \varepsilon \theta))\cdot M\right)$ is a supermartingale deflator for $\mathcal X^\varepsilon(1)$, and \cite[Theorem 4.12]{KarKar07} applies. }

\tbl{
To show \eqref{2163}, first let us fix $x>0$ and consider $X^{\pi, 0}\in\mathcal X^0(x)$, such that $$\mathbb E \left[ U(X^{\pi, 0}_T)\right] = u^0(x, 0,T)\in\mathbb R.$$ The existence of such an $X^{\pi, 0}$ follows from \eqref{2192}, no unbounded profit with bounded risk established above, and \cite[Theorem 3.2]{Mostovyi2015}. An application of Ito's lemma  shows that $\frac{X^{\pi, 0}}{L^\varepsilon} = \left( \frac{X^{\pi, 0}_t}{L^\varepsilon_t}\right)_{t\in[0,T]}\in\mathcal X^{\varepsilon}(x)$ for every $\varepsilon$ in some neighborhood of $0$. Using Assumption \ref{rra}, %on $\{L^\varepsilon\leq \delta_0\}$,
 we get
\begin{displaymath}
\begin{split}
&\left| U'\left(T, \frac{X^{\pi, 0}_T}{L^\varepsilon_T}\right)\frac{X^{\pi, 0}_T}{L^\varepsilon_T}(\bar R_T + \varepsilon \langle \bar R\rangle_T)\right|  \\
&\leq 
U'\left(T, {X^{\pi, 0}_T}\right)X^{\pi, 0}_T
\max(1,(L^\varepsilon_T)^{-\gamma_1})\frac{1}{L^\varepsilon_T}\left( |\bar R_T |+ |\varepsilon |\langle \bar R\rangle_T\right),
\end{split}
\end{displaymath}
%whereas, on $\{L^\varepsilon> \delta_0\}$, from the monotonicity of $U'(T,\cdot)$, we obtain 
%\begin{displaymath}
%\left| U'\left(T, \frac{X^{\pi, 0}_T}{L^\varepsilon_T}\right)\frac{X^{\pi, 0}_T}{L^\varepsilon_T}(\bar R_T + \varepsilon \langle \bar R\rangle_T)\right| \leq 
%U'\left(T, {X^{\pi, 0}_T}\right)X^{\pi, 0}_T
%\frac{\delta_0^{\gamma_2}}{L^\varepsilon_T}\left( |\bar R_T |+ |\varepsilon |\langle \bar R\rangle_T\right),
%\end{displaymath}
Therefore, Assumption \ref{asPert}, implies that for every $\varepsilon$ in some neighborhood of $0$, we have 
\begin{equation}\label{2195}
\mathbb E\left[\int_0^\varepsilon\left| U'\left(T, \frac{X^{\pi, 0}_T}{L^{\tilde\varepsilon}_T}\right)\frac{X^{\pi, 0}_T}{L^{\tilde\varepsilon}_T}(\bar R_T +{\tilde\varepsilon} \langle \bar R\rangle_T)\right|d{\tilde\varepsilon}\right]<\infty.
\end{equation}
Consequently, we obtain 
\begin{equation}
\label{2202}\begin{split}
&u^\varepsilon(x,0,T) \geq \mathbb E\left[ U\left(T, \frac{X^{\pi, 0}_T}{L^{\tilde\varepsilon}_T}\right)\right] = u^0(x,0,T) +\mathbb E\left[ U\left(T, \frac{X^{\pi, 0}_T}{L^{\tilde\varepsilon}_T}\right) - U\left(T, {X^{\pi, 0}_T}\right)\right] \\ 
&= u^0(x,0,T) +\mathbb E\left[\int_0^\varepsilon U'\left(T, \frac{X^{\pi, 0}_T}{L^{\tilde\varepsilon}_T}\right)\frac{X^{\pi, 0}_T}{L^{\tilde\varepsilon}_T}(\bar R_T +{\tilde\varepsilon} \langle \bar R\rangle_T)d{\tilde\varepsilon}\right] \\
&\geq u^0(x,0,T) -\mathbb E\left[\int_0^\varepsilon\left| U'\left(T, \frac{X^{\pi, 0}_T}{L^{\tilde\varepsilon}_T}\right)\frac{X^{\pi, 0}_T}{L^{\tilde\varepsilon}_T}(\bar R_T +{\tilde\varepsilon} \langle \bar R\rangle_T)\right|d{\tilde\varepsilon}\right]>-\infty,
\end{split}
\end{equation}
where, in the second inequality, we have used \eqref{2195}. 
%we deduce using Assumptions \ref{rra} that 
%$\mathbb E \left[ U\left(\frac{X^{\pi, 0}_T}{L^\varepsilon_T}\right)\right]\in\mathbb R$
%for every $\varepsilon$ being sufficiently close to $0$. 
%An application of Ito's formula, $\frac{X^{\pi, 0}}{L^\varepsilon} = \left(\frac{X^{\pi, 0}_t}{L^\varepsilon_t}\right)_{t\in[0,T]}\in  \mathcal A^\varepsilon (X^{\pi, 0}_0,0).$
%
}

\tbl{
 Likewise, for a fixed $y>0$ and $Z\in\mathcal Z_0$, such that $\mathbb E \left[ V(yZ_T)\right]\in\mathbb R$, whose existence follows from \eqref{2192}, no unbounded profit with bounded risk, and \cite[Theorem 3.2]{Mostovyi2015}. An application of Ito's formula implies that for every $\varepsilon$ in some neighborhood of $0$ and $X^\varepsilon\in\mathcal X^\varepsilon(1)$, $X^\varepsilon ZL^\varepsilon = (X^\varepsilon_tZ_tL^\varepsilon_t)_{t\in[0,T]}$ is supermartingale, and thus $ZL^\varepsilon= (Z_tL^\varepsilon_t)_{t\in[0,T]}$ is a supermartingale deflator for the perturbed model. 
% As $-V'(T,\cdot)$ is the inverse of $U'(T,\cdot)$, 
% \eqref{2191}, implies that $\mathbb E \left[ V(yZ_TL^\varepsilon_T)\right]<\infty$, where the proof goes similarly to \eqref{2202}.
Therefore, similarly to \eqref{2202}, we can show that $$\infty>\mathbb E \left[ V(yZ_TL^\varepsilon_T)\right]\geq \sup\limits_{x>0}\left( u^\varepsilon(x,0,T) - xz\right).$$
}
% 
% that $$ V'(T,zx) \geq z^{-\gamma_1}V'(T,x),\quad x>0, z\geq U'(T,1).$$
% 
% 
% Assumptions \ref{rra} implies that  $\mathbb E \left[ V(yZ_TL^\varepsilon_T)\right]\in\mathbb R$ for every $\varepsilon$ being sufficiently close to $0$. 
\end{proof}

\begin{Theorem}\label{thmDeriv}
Let $T>0$ be fixed and suppose that Assumptions \ref{NUPBR}, \ref{finiteness}, \ref{asPert}, and  \ref{rra}  hold as well as $\mathbb E\left[U^{-}\left( T,X^{\pi,0}_T\right)\right]<\infty.$ Then, we have 
\begin{equation}\label{411}
\mathbb P-\lim\limits_{\varepsilon\to 0} J^{\eps, T}_t = J^{0,T}_t,\quad t\in[0,T].\end{equation} 
%Further, with $\hat\eta_t$ being associated to $X^{\pi,0}_t$ in \eqref{1295} and with 
%\begin{equation}\label{3284}
%M^R :=R^0 - \pi\cdot\langle R^0\rangle,
%\end{equation}
%we have
%\begin{equation}\label{1317}\mathbb P-\lim\limits_{\eps\to 0} \frac{u^\eps(X^{\pi, \eps}_t, t, T)-u^0(X^{\pi, 0}_t, t, T)}{\eps} = 
%X^{\pi, 0}_t\hat \eta\left((\psi\pi - \lambda \theta) \cdot M^R_t + {\mathbb E}^{\mathbb R^t}_t\left[(\lambda\theta)\cdot R^0_T \right]\right).
%%\mathbb P-\lim\limits_{\delta\to 0} \frac{v(\eta, t, \delta) - v(\eta, t, 0)}{\delta} =  -\xi\hat\eta\mathbb E^{\mathbb R^t}_t[F].
%\end{equation}
Further, for each $t\in[0,T]$, with 
 $\hat\eta_t$  being associated to $X^{\pi, 0}_t$ via \eqref{1295} and with 
 %$M^R$ being specified in \eqref{3284}, 
\begin{equation}\label{3284}
M^R :=R^0 - \pi\cdot\langle R^0\rangle,
\end{equation}
we have
%\begin{equation}\label{331}\mathbb P-\lim\limits_{\delta\to 0} \frac{u(\xi, t, \delta)-u(\xi, t, 0)}{\delta} 
%%=\mathbb P-\lim\limits_{\delta\to 0} \frac{v(\eta, t, \delta) - v(\eta, t, 0)}{\delta} 
%=  -\xi\hat\eta\mathbb E^{\mathbb R^t}_t[F].
%\end{equation}
\begin{equation}\label{3285}
\begin{split}
 \mathbb P-\lim\limits_{\eps\to 0} \frac{J^{\eps, T}_t-J^{0, T}_t}{\eps} &=
% \mathbb P-\lim\limits_{\eps\to 0} \frac{u^\eps(X^{\pi, \eps}_t, t, T)-u^0(X^{\pi, 0}_t, t, T)}{\eps} \\
% &=
X^{\pi, 0}_t\hat \eta_t\left((\psi\pi - \lambda \theta) \cdot M^R_t + {\mathbb E}^{\mathbb R^t}_t\left[(\lambda\theta)\cdot R^0_T \right]\right).
%\mathbb P-\lim\limits_{\delta\to 0} \frac{v(\eta, t, \delta) - v(\eta, t, 0)}{\delta} =  -\xi\hat\eta\mathbb E^{\mathbb R^t}_t[F].
\end{split}
\end{equation}

\end{Theorem}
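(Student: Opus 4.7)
The plan is to sandwich $J^{\varepsilon,T}_t - J^{0,T}_t$ between a primal lower bound and a dual upper bound, obtained by transporting the unperturbed primal and dual optimizers into the perturbed market via the stochastic exponential $L^\varepsilon$. Denote by $\hat\rho^0$ the primal optimizer of Lemma \ref{231} at $(X^{\pi,0}_t, t, T)$ and by $(\hat\eta_t, \hat z^t)$ the associated dual optimizers from Lemma \ref{lem1291}. First I would verify the feasibility transfers $\tilde\rho^\varepsilon := \hat\rho^0 L^\varepsilon_t/L^\varepsilon_T \in \mathcal C^\varepsilon_{tT}$, $z^\varepsilon := \hat z^t L^\varepsilon/L^\varepsilon_t \in \mathcal Z^\varepsilon_t$, and $\eta^\varepsilon := \hat\eta_t L^\varepsilon_t \in \mathcal N^\varepsilon_t$. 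These are routine It\^o-lemma calculations of the same flavor as in the proof of Lemma \ref{lemMichael}, relying on the basic observation that multiplication by $L^\varepsilon$ is a bijection between $\mathcal X^\varepsilon(1)$ and $\mathcal X^0(1)$; the dual feasibility then follows by duality, and one has the convenient identity $\eta^\varepsilon z^\varepsilon_T = \hat\eta_t \hat z^t_T L^\varepsilon_T$.

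Given these feasibility facts, the primal lower bound reads $J^{\varepsilon,T}_t \geq \mathbb E[U(T, X^{\pi,\varepsilon}_t \hat\rho^0 L^\varepsilon_t/L^\varepsilon_T)|\mathcal F_t]$, while Lemma \ref{lembiconjugacy} applied in the perturbed market yields the dual upper bound $J^{\varepsilon,T}_t \leq \mathbb E[V(T, \hat\eta_t \hat z^t_T L^\varepsilon_T)|\mathcal F_t] + X^{\pi,\varepsilon}_t \hat\eta_t L^\varepsilon_t$. Subtracting the exact identity $J^{0,T}_t = \mathbb E[V(T, \hat\eta_t \hat z^t_T)|\mathcal F_t] + X^{\pi,0}_t \hat\eta_t = \mathbb E[U(T, X^{\pi,0}_t \hat\rho^0)|\mathcal F_t]$ from Lemma \ref{lem1291} sandwiches $J^{\varepsilon,T}_t - J^{0,T}_t$ between two differences that are amenable to first-order Taylor expansion.

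To extract the derivative, I would divide by $\varepsilon$ and take the limit in both bounds. A direct It\^o computation gives $\partial_\varepsilon \log L^\varepsilon_s|_{\varepsilon=0} = -\bar R_s$ and $\partial_\varepsilon \log X^{\pi,\varepsilon}_t|_{\varepsilon=0} = (\pi\psi)\cdot M^R_t + (\pi\lambda\theta)\cdot\langle M\rangle_t$. Combining these with the first-order conditions $U'(T, X^{\pi,0}_T\hat\rho^0) = \hat\eta_t\hat z^t_T$ and $V'(T, \hat\eta_t\hat z^t_T) = -X^{\pi,0}_T\hat\rho^0$ from \eqref{1295}, together with the normalization $\mathbb E[\hat\rho^0 \hat z^t_T|\mathcal F_t] = 1$ from \eqref{1296} (which is exactly what makes $\mathbb R^t$ in \eqref{defMR} a probability measure and identifies $\mathbb E^{\mathbb R^t}_t[\bar R_T] = \mathbb E_t[\hat\rho^0 \hat z^t_T \bar R_T]$), both the primal and dual bounds formally collapse to the common limit
\[
X^{\pi,0}_t\hat\eta_t\bigl[(\pi\psi)\cdot M^R_t + (\pi\lambda\theta)\cdot\langle M\rangle_t - \bar R_t + \mathbb E^{\mathbb R^t}_t[\bar R_T]\bigr].
\]
The algebraic identity $(\pi\lambda\theta)\cdot\langle M\rangle_t - \bar R_t = -(\lambda\theta)\cdot M^R_t$, immediate from $dM^R = dR^0 - \pi\, d\langle M\rangle$ and $d\bar R = \lambda\theta\, dR^0$, then brings this expression into the stated form \eqref{3285}. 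The continuity \eqref{411} is a by-product of the same sandwich.

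The main obstacle is the passage to the limit inside the (conditional) expectations in both bounds. This is where Assumption \ref{asPert} is decisive: its exponential-moment control on $|\bar R_T| + \langle\bar R\rangle_T$ under $\mathbb R^t$ provides integrable majorants for the difference quotients of $L^\varepsilon_T$ and $1/L^\varepsilon_T$ on a neighborhood of $\varepsilon=0$, while Assumption \ref{rra} controls the growth of $U'$ and $V'$ under the scaling factors $X^{\pi,\varepsilon}_t/X^{\pi,0}_t$ and $L^\varepsilon_T$, which remain bounded away from $0$ and $\infty$ on such a neighborhood. A secondary technical point is that $\hat\eta_t$ need only lie in the $\mathbb L^0$-closure of $\mathcal N^0_t$; this would be handled by working on each piece $A_m$ of the partition from Lemma \ref{lem4191} on which $\hat\eta_t 1_{A_m} \in \mathcal N^0_t$, and recombining the local statements to recover convergence in probability.
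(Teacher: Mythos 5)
Your proposal matches the paper's proof: the same primal/dual sandwich using the transported optimizers via $L^\varepsilon$, the same primal lower bound $\mathbb E_t[U(T, X^{\pi,\varepsilon}_t\hat\rho L^\varepsilon_t/L^\varepsilon_T)]$ and dual upper bound $\mathbb E_t[V(T,\hat\eta_t\hat z_{tT}L^\varepsilon_T)]+X^{\pi,\varepsilon}_t\hat\eta_t L^\varepsilon_t$, the same use of the first-order relations \eqref{1295}--\eqref{1296} and of Assumptions \ref{asPert} and \ref{rra} to dominate the difference quotients, and the same localization via the partition from Lemma \ref{lem4191}. Your algebra reducing $(\pi\psi)\cdot M^R_t+(\pi\lambda\theta)\cdot\langle M\rangle_t-\bar R_t+\mathbb E^{\mathbb R^t}_t[\bar R_T]$ to the stated form checks out, so the argument is sound and essentially identical in structure to the one in the paper.
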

\textcolor{black}{
\begin{Remark}
Theorem \ref{thmDeriv} does not assert the stability nor provide the derivatives of the optimal trading strategies that are, in general, more difficult to obtain mathematically. However, Theorem \ref{thmDeriv} does show that under perturbations of the price process of the \tbl{risky asset}, the strategies that are optimal for the base model, which corresponds to $\varepsilon = 0$, drive the nearly optimal wealth processes for perturbed models. 
\end{Remark}
}
\begin{proof}[Proof of Theorem \ref{thmDeriv}] Let us fix $t\in [0,T]$.
Via a direct application of Ito's formula, one can show that
\begin{equation}\label{2201}
\begin{split}
X^{\pi,\varepsilon}_t = x\mathcal E\left(\pi\cdot R^\eps \right)_t
=x\mathcal E\left(\pi\cdot R^0 \right)_t &\frac{\mathcal E\left(\varepsilon(\psi\pi - \lambda \theta)\cdot M^R \right)_t}{\mathcal E\left(-\varepsilon(\lambda\theta)\cdot R^0 \right)_t} \\
= X^{\pi,0}_t & \frac{\mathcal E\left(\varepsilon(\psi\pi - \lambda \theta)\cdot M^R \right)_t}{L^\eps_t}.
\end{split}\end{equation}
This implies that 
$$u^\eps(X^{\pi, \varepsilon}_t, t, T) \geq \mathbb E_t\left[U\left(T,X^{\pi, 0}_t \frac{\mathcal E\left(\varepsilon(\psi\pi - \lambda \theta)\cdot M^R \right)_t \hat\rho_t}{L^\eps_T} \right)\right],\quad \mathbb P-a.s.,$$
where $\hat \rho_t$ is the optimizer to \eqref{primalProblem} corresponding to $\xi = X^{\pi, 0}_t$ and $\varepsilon = 0$, that is the base model for the \tbl{risky asset}. For $\varepsilon >0$, let us consider 
\begin{displaymath}
\begin{split}
&\frac{1}{\varepsilon}\left( u^\eps(X^{\pi, \varepsilon}_t, t, T) - u^0(X^{\pi, 0}_t, t, T)\right) \\ \geq &\frac{1}{\varepsilon}\left(\mathbb E_t\left[U\left(T,X^{\pi, 0}_t \frac{\mathcal E\left(\varepsilon(\psi\pi - \lambda \theta)\cdot M^R \right)_t \hat \rho_t}{L^\eps_T} \right) \right] -  \mathbb E_t\left[U\left(T,X^{\pi, 0}_t \hat \rho_t\right)\right]\right) \\ 
 = & \frac{1}{\varepsilon}\left(\mathbb E_t\left[\int_0^\varepsilon U'\left(T,X^{\pi, 0}_t \hat \rho_t\frac{\mathcal E\left(\beta(\psi\pi - \lambda \theta)\cdot M^R \right)_t }{L^\beta_T} \right) X^{\pi, 0}_t \hat \rho_t\frac{\partial}{\partial \beta}\left(\frac{\mathcal E\left(\beta(\psi\pi - \lambda \theta)\cdot M^R \right)_t }{L^\beta_T}\right)d\beta\right] \right).
\end{split}
\end{displaymath}
From Assumption \ref{rra}, we get
\begin{equation}\label{3281}
\begin{split}
&U'\left(T,X^{\pi, 0}_t \hat \rho_t\frac{\mathcal E\left(\beta(\psi\pi - \lambda \theta)\cdot M^R \right)_t }{L^\beta_T} \right) \\
&\leq U'\left(T,X^{\pi, 0}_t \hat \rho_t \right)\max\left(\left(\frac{\mathcal E\left(\beta(\psi\pi - \lambda \theta)\cdot M^R \right)_t }{L^\beta_T}\right)^{-\gamma_1}, 1\right).
\end{split}
\end{equation}
We recall that, in general, see e.g., \cite[Definition 1, p. 211]{ShirProb}, the definition of conditional expectation does not require integrability. This, in particular, allows to circumvent any integrability conditions on $\mathcal E\left(\beta(\psi\pi - \lambda \theta)\cdot M^R \right)_t$. Therefore, from \eqref{3281}, following \cite[Lemma 5.14]{MostovyiSirbuModel}, and using Assumption \ref{asPert}, we obtain 
\begin{equation}\label{3271}
\begin{split}
&\lim\limits_{\varepsilon\searrow 0 }\frac{1}{\varepsilon}\left( u^\eps(X^{\pi, \varepsilon}_t, t, T) - u^0(X^{\pi, 0}_t, t, T)\right) \\
&\geq \lim\limits_{\varepsilon\searrow 0 }\frac{1}{\varepsilon}\left(\mathbb E_t\left[U\left(T,X^{\pi, 0}_t \frac{\mathcal E\left(\varepsilon(\psi\pi - \lambda \theta)\cdot M^R \right)_t \hat \rho_t}{\mathcal E\left(-\varepsilon(\lambda\theta)\cdot R^0 \right)_T} \right) \right] -  \mathbb E_t\left[U\left(T,X^{\pi, 0}_t \hat \rho_t\right)\right]\right) \\ 
&=\mathbb E_t\left[U'\left(T,X^{\pi, 0}_t \hat \rho_t \right) X^{\pi, 0}_t \hat \rho_t\left((\psi\pi - \lambda \theta)\cdot M^R_t +(\lambda\theta)\cdot R^0_T\right) \right]\\
&=\mathbb E_t\left[X^{\pi, 0}_t \hat \rho_t \hat \eta_t \hat z_{tT} \left((\psi\pi - \lambda \theta)\cdot M^R_t +(\lambda\theta)\cdot R^0_T\right) \right]\\
&=X^{\pi, 0}_t\hat \eta_t\left((\psi\pi - \lambda \theta) \cdot M^R_t + {\mathbb E}^{\mathbb R^t}_t\left[(\lambda\theta)\cdot R^0_T \right]\right),\\
\end{split}
\end{equation} 
where $ \hat \eta_t$ and $\hat z_{tT}$ are given via Lemma \ref{lem1291}. 
To obtain the opposite inequality, from Lemma \ref{lem1291}, we get
$$u^\varepsilon(X^{\pi, \varepsilon}_t, t,T)\leq \mathbb E_t\left[
 V(T,\hat \eta_t\hat z_{tT} L^\eps_T)\right] + X^{\pi, \varepsilon}_t  \hat \eta_t L^\eps_t.$$
% $$\tb{X^{\pi, 0}_t \frac{\mathcal E\left(\varepsilon(\psi\pi - \lambda \theta)\cdot M^R \right)_t \hat \rho_t}{\mathcal E\left(-\varepsilon(\lambda\theta)\cdot R^0 \right)_T}}$$
 Therefore, for $\eps>0$, using Lemma \ref{lem1291} again, we deduce that
 \begin{equation}\label{3282}
 \begin{split}
 &\frac{1}{\eps}\left(u^\varepsilon(X^{\pi, \varepsilon}_t, t,T) - u^0(X^{\pi, 0}_t, t,T)\right) \\
 &\leq
  \frac{1}{\eps}\left(\mathbb E_t\left[
 V\left(T,\hat \eta_tL^\eps_t\hat z_{tT} \frac{L^\eps_T}{L^\eps_t}\right)\right] - \mathbb E_t\left[
 V(T,\hat \eta_t\hat z_{tT})\right]\right)
 + \frac{1}{\eps}\left(X^{\pi, \varepsilon}_t  \hat \eta_t  L^\eps_t-X^{\pi, 0}_t  \hat \eta_t \right).
 \end{split}
 \end{equation}
% and thus combining with the result of Lemma \ref{lem1291}, we get
% \begin{equation}\label{3282}
% \begin{split}
% &U\left(T,X^{\pi, 0}_t \frac{\mathcal E\left(\varepsilon(\psi\pi - \lambda \theta)\cdot M^R \right)_t \hat \rho_t}{\mathcal E\left(-\varepsilon(\lambda\theta)\cdot R^0 \right)_T} \right) - U(T,X^{\pi,0}_t\hat \rho_t) \\
% &\leq V(T,\hat \eta_t\hat z_{tT} L^\eps) - V(T,\hat \eta_t\hat z_{tT}) + X^{\pi, 0}_t \frac{\mathcal E\left(\varepsilon(\psi\pi - \lambda \theta)\cdot M^R \right)_t \hat \rho_t}{\mathcal E\left(-\varepsilon(\lambda\theta)\cdot R^0 \right)_T}  \hat \eta_t \hat z_{tT} L^\eps - X^{\pi,0}_t\hat \rho_t\hat \eta_t \hat z_{tT},
% \end{split}
% \end{equation}
Using Assumptions \ref{asPert} and \ref{rra}, and by passing into an $\mathcal F_t$-measurable partition of $\Omega$ as is Lemma \ref{lem4191}, we get
 \begin{equation}\label{3283}
 \lim\limits_{\varepsilon\searrow 0}\frac{1}{\varepsilon}\mathbb E_t\left[V(T,\hat \eta_t\hat z_{tT} L^\eps_T) - V(T,\hat \eta_t\hat z_{tT}) \right] = X^{\pi, 0}_t\hat \eta_t{\mathbb E}^{\mathbb R^t}_t\left[(\lambda\theta)\cdot R^0_T \right],
 \end{equation}
 and 
 \begin{equation}\label{3272}
 \begin{split}
% & \lim\limits_{\varepsilon\to 0}\left(\mathbb E_t\left[X^{\pi, 0}_t \frac{\mathcal E\left(\varepsilon(\psi\pi - \lambda \theta)\cdot M^R \right)_t \hat \rho_t}{\mathcal E\left(-\varepsilon(\lambda\theta)\cdot R^0 \right)_T}  \hat \eta_t \hat z_{tT} L^\eps - X^{\pi,0}_t\hat \rho_t\hat \eta_t \hat z_{tT}\right]\right)\\
&\lim\limits_{\varepsilon\searrow 0}\frac{1}{\eps}\left(X^{\pi, \varepsilon}_t  \hat \eta_t L^\eps_t-X^{\pi, 0}_t  \hat \eta_t  \right)\\
&= \lim\limits_{\varepsilon\searrow 0}\frac{1}{\eps}\left(X^{\pi,0}_t \frac{\mathcal E\left(\varepsilon(\psi\pi - \lambda \theta)\cdot M^R \right)_t}{L^\eps_t} \hat \eta_t L^\eps_t-X^{\pi, 0}_t  \hat \eta_t \right)\\
&= X^{\pi, 0}_t  \hat \eta_t \lim\limits_{\varepsilon\searrow 0}\frac{1}{\eps}\left( {\mathcal E\left(\varepsilon(\psi\pi - \lambda \theta)\cdot M^R \right)_t} -1\right)\\
 &%=X^{\pi, 0}_t\hat \eta_t\lim\limits_{\varepsilon\searrow 0}\left(\mathbb E^{\mathbb R^t}_t\left[ \mathcal E\left(\varepsilon(\psi\pi - \lambda \theta)\cdot M^R \right)_t  - 1\right]\right)
 =X^{\pi, 0}_t\hat\eta_t(\psi\pi - \lambda \theta)\cdot M^R_t.
  \end{split}
 \end{equation}
From \eqref{3271} and \eqref{3282} using \eqref{3283} and \eqref{3272}, we deduce that 
\begin{equation}\nonumber\mathbb P-\lim\limits_{\eps\searrow 0} \frac{u^\eps(X^{\pi, \eps}_t, t, T)-u^0(X^{\pi, 0}_t, t, T)}{\eps} = 
X^{\pi, 0}_t\hat \eta\left((\psi\pi - \lambda \theta) \cdot M^R_t + {\mathbb E}^{\mathbb R^t}_t\left[(\lambda\theta)\cdot R^0_T \right]\right).
%\mathbb P-\lim\limits_{\delta\to 0} \frac{v(\eta, t, \delta) - v(\eta, t, 0)}{\delta} =  -\xi\hat\eta\mathbb E^{\mathbb R^t}_t[F].
\end{equation}
 Similarly, we can show that 
 \begin{equation}\nonumber
 \mathbb P-\lim\limits_{\eps\nearrow 0} \frac{u^\eps(X^{\pi, \eps}_t, t, T)-u^0(X^{\pi, 0}_t, t, T)}{\eps} = 
X^{\pi, 0}_t\hat \eta\left((\psi\pi - \lambda \theta) \cdot M^R_t + {\mathbb E}^{\mathbb R^t}_t\left[(\lambda\theta)\cdot R^0_T \right]\right).
%\mathbb P-\lim\limits_{\delta\to 0} \frac{v(\eta, t, \delta) - v(\eta, t, 0)}{\delta} =  -\xi\hat\eta\mathbb E^{\mathbb R^t}_t[F].
\end{equation}
\eqref{411} and \eqref{3285} follow.
\end{proof}
\begin{Acknowledgments}
The author would like to thank Thaleia Zariphopoulou for discussions and for her suggestions on a draft of this paper. The author has been supported by the National Science Foundation under grants No. DMS-1600307 (2015 - 2019) and DMS-1848339 (2019 - 2024). Any opinions, findings, and conclusions or recommendations expressed in this material are those of the authors and do not necessarily reflect the views of the National Science Foundation.
\end{Acknowledgments}
\bibliography{literature1}\bibliographystyle{alpha}
\end{document}